\newcommand{\Z}{\mathbb{Z}}
\newcommand{\R}{\mathbb{R}}
\newcommand{\C}{\mathbb{C}}
\renewcommand{\AA}{\mathcal{A}}
\newcommand{\RR}{\mathcal{R}}
\newcommand{\id}{\mathrm{id}}
\newcommand{\Rep}{\mathrm{Rep}}
\newcommand{\Irr}{\mathrm{Irr}}
\newcommand{\disc}{\mathrm{disc}}
\newcommand{\gp}{\mathrm{gp}}
\newcommand{\temp}{\mathrm{temp}}
\newcommand{\gen}{\mathrm{gen}}
\newcommand{\Ind}{\mathrm{Ind}}
\newcommand{\Jac}{\mathrm{Jac}}
\newcommand{\Frob}{\mathrm{Frob}}
\newcommand{\semi}{\mathrm{s.s.}}
\newcommand{\St}{\mathrm{St}}
\newcommand{\Ad}{\mathrm{Ad}}
\newcommand{\sub}{\mathrm{sub}}
\newcommand{\diag}{\mathrm{diag}}
\newcommand{\GL}{\mathrm{GL}}
\newcommand{\SL}{\mathrm{SL}}
\newcommand{\SO}{\mathrm{SO}}
\newcommand{\Sp}{\mathrm{Sp}}
\newcommand{\iif}{&\quad&\text{if }}
\newcommand{\other}{&\quad&\text{otherwise}}
\newcommand{\resp}{resp.~}
\renewcommand{\1}{\mathbf{1}}
\newcommand{\pair}[1]{\left\langle #1 \right\rangle}
\newcommand{\half}[1]{\frac{#1}{2}}
\newcommand{\ub}[1]{\underline{#1}}
\newtheorem{thm}{Theorem}[section]
\newtheorem{lem}[thm]{Lemma}
\newtheorem{prop}[thm]{Proposition}
\newtheorem{cor}[thm]{Corollary}
\newtheorem{rem}[thm]{Remark}
\newtheorem{ex}[thm]{Example}
\title{Jacquet modules and local Langlands correspondence}
\author{Hiraku Atobe}
\date{}
\subjclass[2010]{Primary 22E50; Secondary 11S37}
\keywords{Jacquet module; Local Langlands correspondence}
\address{
Department of Mathematics, Hokkaido University
Kita 10, Nishi 8, Kita-Ku, Sapporo, Hokkaido, 060-0810, Japan 
}
\email{
atobe@math.sci.hokudai.ac.jp
}
\begin{document}
\maketitle

\begin{abstract}
In this paper, 
we explicitly compute the semisimplifications of all Jacquet modules of 
irreducible representations with generic $L$-parameters
of $p$-adic split odd special orthogonal groups or symplectic groups.
Our computation represents them in terms of linear combinations of standard modules with rational coefficients.
The main ingredient of this computation 
is to apply M{\oe}glin's explicit construction of local $A$-packets to tempered $L$-packets.
\end{abstract}

%\section{Introduction}
%\section{Introduction}
\section{Introduction}
When $G$ is a $p$-adic reductive group and $P=MN$ is a parabolic subgroup, 
there is the normalized induction functor
\[
\Ind_P^G \colon \Rep(M) \rightarrow \Rep(G).
\]
The (normalized) Jacquet functor 
\[
\Jac_P \colon \Rep(G) \rightarrow \Rep(M)
\]
is the left adjoint functor of $\Ind_P^G$.
For $\pi \in \Rep(G)$, 
the object $\Jac_P(\pi) \in \Rep(M)$ is called the \textbf{Jacquet module} of $\pi$ with respect to $P$.
In the representation theory of $p$-adic reductive groups, 
the induction functors and the Jacquet functors
are ones of the most basic and important terminologies.
One of the reasons why they are so important 
is that they are both exact functors.
\par

The Jacquet modules have many applications.
For example: 
\begin{itemize}
\item
Looking at the Jacquet modules of irreducible representation $\pi$ of $G$, 
one can take a parabolic subgroup $P=MN$ and an irreducible supercuspidal representation $\rho_M$ of $M$
such that $\pi \hookrightarrow \Ind_P^G(\rho_M)$.
Such a $\rho_M$ is called the \textbf{cuspidal support} of $\pi$.

\item
\textbf{Casselman's criterion} says that the growth of matrix coefficients of an irreducible representation $\pi$
is determined by exponents of the Jacquet modules of $\pi$.

\item
M{\oe}glin explicitly constructed the \textbf{local $A$-packets}, 
which are the ``local factors of Arthur's global classification'', 
by taking Jacquet functors intelligently.
\end{itemize}
\vskip10pt

In this paper, we shall give an explicit description of 
the semisimplifications of Jacquet modules of tempered representations
of split odd special orthogonal groups $\SO_{2n+1}(F)$ or symplectic groups $\Sp_{2n}(F)$, 
where $F$ is a non-archimedean local field of characteristic zero.
To do this, it is necessary to have some sort of classification of irreducible representations of these groups. 
We use the local Langlands correspondence established by Arthur \cite{Ar} for such a classification.
\par

The local Langlands correspondence attaches each irreducible representation $\pi$ 
of $G(F) = \SO_{2n+1}(F)$ or $G(F) = \Sp_{2n}(F)$
to its $L$-parameter $(\phi, \eta)$, 
where
\[
\phi \colon W_F \times \SL_2(\C) \rightarrow \GL_N(\C)
\]
is a self-dual representation of the Weil--Deligne group $W_F \times \SL_2(\C)$ 
with a suitable structure, 
and 
\[
\eta \in \Irr(A_\phi)
\]
is an irreducible character of the component group $A_\phi$ associated to $\phi$ 
(which is trivial on the central element).
\vskip10pt

The Jacquet modules will be computed by 
two main theorems (Theorems \ref{jac1} and \ref{jac2}) 
and Tadi{\'c}'s formula (Theorem \ref{tdc}) 
together with Lemma \ref{circ}.
Fix an irreducible unitary supercuspidal representation $\rho$ of $\GL_d(F)$.
By abuse of notation, we denote by the same notation $\rho$ 
the irreducible representation of $W_F$ corresponding to $\rho$
by the local Langlands correspondence.
Let $P_d = M_dN_d$ be the standard parabolic subgroup of $G(F)$ 
with Levi subgroup $M_d \cong \GL_d(F) \times G_0(F)$ 
for some classical group $G_0$ of the same type as $G$.
For an irreducible representation $\pi$, 
if the semisimplification of $\Jac_{P_d}(\pi)$ is of the form
\[
\semi\Jac_{P_d}(\pi) = \bigoplus_{i \in I} \tau_i \boxtimes \pi_i, 
\]
we set 
\[
\Jac_{\rho|\cdot|^x}(\pi) = \bigoplus_{\substack{i \in I \\ \tau_i \cong \rho|\cdot|^x}} \pi_i
\]
for $x \in \R$.
The first main theorem is the description of $\Jac_{\rho|\cdot|^x}(\pi)$ for tempered $\pi$ (Theorem \ref{jac1}).
To state this theorem clearly, 
we introduce an \textbf{enhanced component group} $\AA_\phi$ attached to $\phi$
with a canonical surjection $\AA_\phi \twoheadrightarrow A_\phi$
in \S \ref{Lpara}.
\par

For discrete series $\pi$, 
Theorem \ref{jac1} has been proven by Xu \cite[Lemma 7.3]{X1}
to describe the cuspidal support of $\pi$ in terms of its $L$-parameter.
As related works, Aubert--Moussaoui--Solleveld \cite{AMS1, AMS2, AMS3} 
defined the ``cuspidality'' of $L$-parameters $(\phi, \eta)$ by a geometric way, 
and compared this notion with the cuspidal supports or the Bernstein components of corresponding $\pi$.
Theorem \ref{jac1} gives us more information for $\pi$ than its cuspidal support. 
The main ingredient for the proof of Theorem \ref{jac1} 
is M{\oe}glin's explicit construction of tempered $L$-packets (Theorem \ref{moe}).
\vskip10pt

The second main theorem (Theorem \ref{jac2}) is a reduction of 
the computation of the Jacquet module $\semi\Jac_{P_k}(\pi)$
with respect to any maximal parabolic subgroup $P_k$
to the one of $\Jac_{\rho|\cdot|^x}(\pi)$.
Using Theorems \ref{jac1} and \ref{jac2} (together with Lemma \ref{circ}), 
we can explicitly compute the semisimplifications of all Jacquet modules of 
irreducible tempered representations $\pi$.
In fact, using a generalization of the standard module conjecture 
by M{\oe}glin--Waldspurger (Theorem \ref{smc})
and Tadi{\'c}'s formula (Theorem \ref{tdc}),  
we can apply this explicit computation to 
any irreducible representation $\pi$ with generic $L$-parameter $(\phi, \eta)$.
\vskip10pt

This paper is organized as follows.
In \S \ref{ind-jac}, 
we review some basic results on induced representations and Jacquet modules for classical groups.
In particular, Tadi{\'c}'s formula, which computes the Jacquet modules of induced representations, 
is stated in \S \ref{so-sp}.
In \S \ref{s.LLC}, we explain the local Langlands correspondence 
and M{\oe}glin's explicit construction of tempered $L$-packets.
In \S \ref{desc}, we state the main theorems (Theorems \ref{jac1} and \ref{jac2}) and give some examples.
Finally, we prove the main theorems in \S \ref{pf}

%\subsection*{Notation}
\subsection*{Notation}
Let $F$ be a non-archimedean local field of characteristic zero.
We denote by $W_F$ the Weil group of $F$.
The norm map $|\cdot| \colon W_F \rightarrow \R^\times$ is normalized so that $|\Frob| = q^{-1}$, 
where $\Frob \in W_F$ is a fixed (geometric) Frobenius element, 
and $q = q_F$ is the cardinality of the residual field of $F$.
\par

Each irreducible supercuspidal representation $\rho$ of $\GL_d(F)$
is identified with the irreducible bounded representation of $W_F$ of dimension $d$ 
via the local Langlands correspondence for $\GL_d$.
Through this paper, we fix such a $\rho$.
For each positive integer $a$, the unique irreducible algebraic representation of $\SL_2(\C)$
of dimension $a$ is denoted by $S_a$.
\par

For a $p$-adic group $G$, we denote by $\Rep(G)$ (\resp $\Irr(G)$) 
the set of equivalence classes of smooth admissible (\resp irreducible) representations of $G$.
For $\Pi \in \Rep(G)$, we write $\semi(\Pi)$ for the semisimplification of $\Pi$.

%\section{Induced representations and Jacquet modules}\label{ind-jac}
%\section{Induced representations and Jacquet modules}\label{ind-jac}
\section{Induced representations and Jacquet modules}\label{ind-jac}
In this section, we recall some results on induced representations and Jacquet modules.

%\subsection{Representations of $\GL_k(F)$}
\subsection{Representations of $\GL_k(F)$}
Let $P=MN$ be a standard parabolic subgroup of $\GL_k(F)$, 
i.e., $P$ contains the Borel subgroup consisting of upper half triangular matrices.
Then the Levi subgroup $M$ is isomorphic to $\GL_{k_1}(F) \times \dots \times \GL_{k_r}(F)$
with $k_1 + \dots + k_r = k$.
For smooth representations $\tau_1, \dots, \tau_r$ of $\GL_{k_1}(F), \dots, \GL_{k_r}(F)$, respectively, 
we denote the normalized induced representation by
\[
\tau_1 \times \dots \times \tau_r \coloneqq 
\Ind_{P}^{\GL_k(F)}(\tau_1 \boxtimes \dots \boxtimes \tau_r).
\]
\par
A \textbf{segment} is a symbol $[x,y]$, 
where $x,y \in \R$ with $x-y \in \Z$ and $x \geq y$.
We identify $[x,y]$ with the set $\{x, x-1, \dots, y\}$ 
so that $\#[x,y] = x-y+1$.
Then the normalized induced representation 
\[
\rho|\cdot|^{x} \times \dots \times \rho|\cdot|^y
\]
of $\GL_{d(x-y+1)}(F)$
has a unique irreducible subrepresentation, 
which is denoted by 
\[
\pair{\rho; x, \dots, y}.
\]
If $y = -x \leq 0$, this is called a \textbf{Steinberg representation} 
and is denoted by 
\[
\St(\rho, 2x+1) = \pair{\rho; x, \dots, -x}, 
\]
which is a discrete series representation of $\GL_{d(2x+1)}(F)$. 
In general, $\pair{\rho; x, \dots, y}$ is the twist $|\cdot|^{\half{x+y}} \St(\rho, x-y+1)$.
We say that 
two segments $[x, y]$ and $[x', y']$ are \textbf{linked} if 
$[x,y] \not\subset [x',y']$, $[x',y'] \not\subset [x,y]$ as sets, 
and $[x,y] \cup [x,y']$ is also a segment.
The linked-ness gives an irreducibility criterion for induced representations.
\begin{thm}[Zelevinsky {\cite[Theorem 9.7]{Z}}]\label{zel}
Let $[x,y]$ and $[x',y']$ be segments, 
and let $\rho$ and $\rho'$ be irreducible unitary supercuspidal representations 
of $\GL_d(F)$ and $\GL_{d'}(F)$, respectively.
Then the induced representation 
\[
\pair{\rho; x, \dots, y} \times \pair{\rho'; x', \dots, y'}
\]
is irreducible unless $[x, y]$ are $[x',y']$ are linked, and $\rho \cong \rho'$.
\end{thm}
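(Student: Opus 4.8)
The plan is to establish only the irreducibility assertion: that $\pair{\rho;x,\dots,y}\times\pair{\rho';x',\dots,y'}$ is irreducible whenever $\rho\not\cong\rho'$, or whenever $\rho\cong\rho'$ and the segments $[x,y]$, $[x',y']$ are not linked. The first step is a reduction to the case $\rho\cong\rho'$: if $\rho\not\cong\rho'$, then since both are unitary no twist $\rho|\cdot|^i$ occurring in the supercuspidal support of $\pair{\rho;x,\dots,y}$ can be isomorphic to a twist $\rho'|\cdot|^j$ occurring in that of $\pair{\rho';x',\dots,y'}$, so the two supports lie in disjoint $\Z$-lines of supercuspidal representations, and then the product is irreducible by the standard fact that products of irreducible representations from disjoint $\Z$-lines are irreducible. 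Henceforth $\rho\cong\rho'$; I would put $\Delta_1=[x,y]$, $\Delta_2=[x',y']$ and order the two factors so that $x+y\ge x'+y'$.

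Next I would dispose of the case $x+y=x'+y'$: then the segments share a centre, so one contains the other (in particular they are automatically unlinked), and $\pair{\rho;x,\dots,y}\times\pair{\rho;x',\dots,y'}$ is the twist by $|\cdot|^{\half{x+y}}$ of $\St(\rho,x-y+1)\times\St(\rho,x'-y'+1)$, which is parabolically induced from a discrete series of a Levi subgroup of a general linear group, hence tempered, hence irreducible since the $R$-group of a general linear group is trivial. If instead $x+y>x'+y'$, then $\pair{\rho;x,\dots,y}\times\pair{\rho;x',\dots,y'}$ is a standard module in the sense of the Langlands classification (data $\St(\rho,x-y+1)|\cdot|^{\half{x+y}}$ and $\St(\rho,x'-y'+1)|\cdot|^{\half{x'+y'}}$, with strictly decreasing exponents), so it is irreducible if and only if the long intertwining operator
\[
\pair{\rho;x,\dots,y}\times\pair{\rho;x',\dots,y'}\longrightarrow\pair{\rho;x',\dots,y'}\times\pair{\rho;x,\dots,y}
\]
is injective, its image being the Langlands quotient; so it remains to prove this injectivity when $\Delta_1$ and $\Delta_2$ are unlinked.

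When $\Delta_1\cap\Delta_2=\emptyset$ this is straightforward: disjointness together with $x+y>x'+y'$ forces $\Delta_1$ to lie entirely above $\Delta_2$, and non-linkedness (the union not being a segment) forces $y\ge x'+2$, so $|i-j|\ge2$ for all $i\in\Delta_1$ and $j\in\Delta_2$. Realizing $\pair{\rho;x,\dots,y}\times\pair{\rho;x',\dots,y'}$ inside $\rho|\cdot|^x\times\dots\times\rho|\cdot|^y\times\rho|\cdot|^{x'}\times\dots\times\rho|\cdot|^{y'}$ and factoring the long intertwining operator into rank-one ones, each such operator transposes some $\rho|\cdot|^i$ ($i\in\Delta_1$) past some $\rho|\cdot|^j$ ($j\in\Delta_2$) and is an isomorphism because $\rho|\cdot|^i\times\rho|\cdot|^j$ is irreducible (the well-known case of two single points, $|i-j|\ne1$); hence the composite is an isomorphism, and the standard module is irreducible.

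The remaining case --- one segment containing the other --- is the real point, and here I would argue by induction on $(x-y)+(x'-y')$, the base cases (two single points, two equal segments) being already settled above. After passing to the contragredient and swapping the factors if necessary, one may assume $\Delta_2\subseteq\Delta_1$ with $x>x'$. Using that the Jacquet module of $\pair{\rho;a,\dots,b}$ along the relevant maximal parabolic is the single term $\rho|\cdot|^a\boxtimes\pair{\rho;a-1,\dots,b}$, together with a Leibniz rule for the $\rho|\cdot|^x$-derivative of a product (immediate from the geometric lemma), one finds that this derivative carries $\pair{\rho;x,\dots,y}\times\pair{\rho;x',\dots,y'}$ to $\pair{\rho;x-1,\dots,y}\times\pair{\rho;x',\dots,y'}$ --- again a product of nested segments, of strictly smaller total length, irreducible by the inductive hypothesis --- while $\pair{\rho;x,\dots,y}\times\pair{\rho;x',\dots,y'}$ embeds into $\rho|\cdot|^x\times\bigl(\pair{\rho;x-1,\dots,y}\times\pair{\rho;x',\dots,y'}\bigr)$. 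The hard part will be the descent: passing from the irreducibility of this derivative back to that of the original product. One knows $\rho|\cdot|^x$ occurs with multiplicity one in the maximal-parabolic Jacquet module of the product, with irreducible isotypic piece, so every composition factor has $\rho|\cdot|^x$-derivative either zero or that fixed piece; the difficulty is to exclude composition factors with vanishing $\rho|\cdot|^x$-derivative, and this is exactly where I would bring in Zelevinsky's structure theory of highest derivatives --- that the highest derivative of a product is the product of the highest derivatives, that an irreducible representation is recovered from its highest derivative together with a gluing datum, and that the socle of the relevant induced representation is irreducible with the expected highest derivative --- to force the composition length down to one. (When $\deg\rho>1$ I would work throughout with the $\rho$-analogue of the Bernstein--Zelevinsky derivative, or first reduce to $\deg\rho=1$ by a Bushnell--Kutzko type equivalence between the Bernstein block attached to the $\Z$-line of $\rho$ and a block for a general linear group over another non-archimedean local field.)
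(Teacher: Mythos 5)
The paper does not prove this statement; it is quoted verbatim from Zelevinsky \cite[Theorem 9.7]{Z} and used as a black box, so there is no internal argument to compare against. That said, your proposal is a reasonable outline of how Zelevinsky's proof actually goes, and the easy cases are handled correctly: the reduction to $\rho\cong\rho'$ via disjointness of supercuspidal lines, the equal-centre case via unitary induction from discrete series on a Levi of $\GL$, and the disjoint-segment case via factoring the long intertwining operator into rank-one isomorphisms (using $|i-j|\geq2$) are all sound, as is the observation that the Jacquet computation reduces the nested case to a product of shorter nested segments.

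The genuine gap is exactly where you flag it: the descent from the irreducibility of $\Jac_{\rho|\cdot|^x}$ of the product back to the irreducibility of the product itself. Knowing that the $\rho|\cdot|^x$-derivative is irreducible and that the product embeds into $\rho|\cdot|^x\times(\text{derivative})$ does not by itself exclude a second composition factor with vanishing $\rho|\cdot|^x$-derivative, and the paragraph invoking ``Zelevinsky's structure theory of highest derivatives'' is not an argument but a pointer to the machinery that would have to be developed. That machinery (multiplicativity of the highest derivative, the classification of irreducibles by their highest derivatives, socle irreducibility) is essentially the content of Sections 3--8 of \cite{Z}, i.e., precisely what the citation in the paper is deferring to. So your write-up does not give a self-contained proof of the hard case; it reduces it, correctly, to a problem that is then resolved only by re-importing the reference the theorem was already attributed to. If you want a proof that stands on its own, the missing step is to actually run the highest-derivative comparison (or, alternatively, to compute directly that the space of intertwining operators from the product to itself is one-dimensional, which is Zelevinsky's route via \cite[Proposition 4.6]{Z} and its consequences), and that is nontrivial work, not a corollary of what you wrote.
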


Let $\Irr_\rho(\GL_{dm}(F))$ be the subset of $\Irr(\GL_{dm}(F))$ consisting of $\tau$ 
with cuspidal support of the form $\rho|\cdot|^{x_1} \times \dots \times \rho|\cdot|^{x_m}$, i.e., 
\[
\tau \hookrightarrow \rho|\cdot|^{x_1} \times \dots \times \rho|\cdot|^{x_m}
\] 
for some $x_1, \dots, x_m \in \R$.
We understand that $\1 \coloneqq \1_{\GL_0} \in \Irr_\rho(\GL_0(F))$.
It is easy to see that 
\begin{itemize}
\item
for pairwise distinct irreducible unitary supercuspidal representations $\rho_1, \dots, \rho_r$, 
if $\tau_i \in \Irr_{\rho_i}(\GL_{d_im_i}(F))$ for $i = 1, \dots, r$, 
then the induced representation $\tau_1 \times \dots \times \tau_r$ is irreducible; 
\item
any irreducible representation of $\GL_k(F)$ is of the above form for some $\tau_i \in \Irr_{\rho_i}(\GL_{d_im_i}(F))$.
\end{itemize}

\begin{lem}\label{GL}
Let $\Omega_m$ be the subset of $\R^m$ consisting of elements
\[
\ub{x} = 
( \underbrace{x_1, x_1-1, \dots, y_1}_{x_1-y_1+1}, \underbrace{x_2, x_2-1, \dots, y_2}_{x_2-y_2+1}, 
\dots, \underbrace{x_t, x_t-1, \dots, y_t}_{x_t-y_t+1} )
\]
such that $x_{i-1} \leq x_{i}$ for $1 < i \leq t$, and $y_{i-1} \leq y_{i}$ if $x_{i-1} = x_i$.
Let $\Delta_i = \pair{\rho; x_i, x_i-1, \dots, y_i}$ be the discrete series representation of $\GL_{d(x_i-y_i+1)}(F)$
corresponding to the segment $[x_i,y_i]$.
Then the induced representation 
$\Delta_{\ub{x}} \coloneqq \Delta_1 \times \dots \times \Delta_t$ 
has a unique irreducible subrepresentation $\tau_{\ub{x}}$.
The map $\ub{x} \mapsto \tau_{\ub{x}}$ gives a bijection
\[
\Omega_m \rightarrow \Irr_\rho(\GL_{dm}(F)).
\]
\end{lem}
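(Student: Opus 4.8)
The plan is to recognize Lemma~\ref{GL} as a repackaging of Zelevinsky's classification \cite{Z} of $\Irr(\GL_n(F))$ by multisegments, whose irreducibility input is exactly Theorem~\ref{zel}. I would quote two standard facts. First: every essentially square-integrable representation of a general linear group over $F$ with cuspidal support a string of twists of $\rho$ equals $\langle \rho; x, \dots, y \rangle$ for a unique segment $[x,y]$, and it lives in $\GL_{d(x-y+1)}(F)$. Second: to a multiset of such segments $\mathfrak{m} = \{[x_1,y_1], \dots, [x_t,y_t]\}$, ordered so that $x_1 \le \dots \le x_t$ with $y_{i-1} \le y_i$ when $x_{i-1} = x_i$, Zelevinsky's theorem attaches the unique irreducible subrepresentation of $\langle\rho;x_1,\dots,y_1\rangle \times \dots \times \langle\rho;x_t,\dots,y_t\rangle$, and this gives a bijection from multisegments supported on $\{\rho|\cdot|^x : x \in \R\}$ onto $\bigsqcup_n \Irr_\rho(\GL_n(F))$, sending $\mathfrak m$ of total size $m$ to an element of $\Irr_\rho(\GL_{dm}(F))$. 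Granting these, it remains to identify $\Omega_m$ with the set of such multisegments of total size $m$, and to check that the ordering built into $\Omega_m$ is the admissible one.

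For the identification: given $\ub{x} = (z_1, \dots, z_m) \in \Omega_m$, the segment boundaries are recoverable, since a new segment must begin at position $k+1$ precisely when $z_{k+1} \ne z_k - 1$. Indeed, if $z_{k+1} = z_k - 1$ began a new segment $[x_{i+1}, y_{i+1}]$ following $[x_i, y_i]$, then $x_{i+1} = z_{k+1} = y_i - 1 \le x_i - 1 < x_i \le x_{i+1}$, a contradiction. Conversely, sorting any multiset of $\rho$-segments by the rule in the definition of $\Omega_m$ gives a valid element of $\Omega_m$ whose parsing returns the original multiset, the same inequality showing that no two consecutive sorted segments merge into one decreasing run. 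Hence $\Omega_m$ and multisegments over $\rho$ of total size $m$ are the same datum, and for such a datum $\Delta_{\ub{x}}$ is literally the induced representation occurring in Zelevinsky's construction.

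The one substantive point is that the ordering of the segments $[x_1,y_1], \dots, [x_t,y_t]$ attached to $\ub{x} \in \Omega_m$ is the admissible one, so that $\Delta_{\ub{x}} = \Delta_1 \times \dots \times \Delta_t$ really has a unique irreducible subrepresentation $\tau_{\ub{x}}$. This comes down to the elementary remark that two linked segments are never nested, so in particular never share their larger endpoint: for $i<j$, either $[x_i,y_i]$ and $[x_j,y_j]$ are unlinked, in which case $\langle\rho;x_i,\dots,y_i\rangle \times \langle\rho;x_j,\dots,y_j\rangle$ is irreducible by Theorem~\ref{zel} and the relative order is immaterial, or they are linked with $x_i < x_j$, which is exactly their order in Zelevinsky's normal form. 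With $\tau_{\ub x}$ thus well defined — and lying in $\Irr_\rho(\GL_{dm}(F))$, being a subrepresentation of an iterated induction from $\rho|\cdot|^{x_1} \boxtimes \dots \boxtimes \rho|\cdot|^{x_m}$ — bijectivity follows: injectivity because distinct $\ub{x}$ give distinct multisegments and hence, by Zelevinsky, inequivalent representations; surjectivity because the multisegment attached to a given $\tau \in \Irr_\rho(\GL_{dm}(F))$ is supported on twists of $\rho$ and has total size $m$ (its cuspidal support being $\rho|\cdot|^{x_1} \times \dots \times \rho|\cdot|^{x_m}$), hence comes from $\Omega_m$. I expect the only place needing genuine care to be this ordering check — confirming that the increasing-larger-endpoint convention of $\Omega_m$, rather than the opposite one (which would produce irreducible quotients), is the convention under which Zelevinsky's parametrization is realized by subrepresentations; its content is precisely the comparison of linked segments above, everything else being bookkeeping.
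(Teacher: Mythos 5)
Your proof is correct and follows exactly the route the paper intends: the paper's own proof is the one-line remark that the lemma ``follows from the Langlands classification and Theorem~\ref{zel}, see also \cite[Proposition 9.6]{Z},'' and your argument is a careful unpacking of precisely those ingredients, including the only delicate point, namely that the increasing-$x_i$ convention in $\Omega_m$ agrees with the Langlands/Zelevinsky ordering because linked segments cannot be nested and hence have comparable pairs of endpoints.
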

\begin{proof}
This follows from the Langlands classification and Theorem \ref{zel}, 
See also \cite[Proposition 9.6]{Z}.
\end{proof}

For a partition $(k_1,\dots, k_r)$ of $k$, 
we denote by $\Jac_{(k_1, \dots, k_r)}$ the normalized Jacquet functor on $\Rep(\GL_k(F))$
with respect to the standard maximal parabolic subgroup $P = MN$ 
with $M \cong \GL_{k_1}(F) \times \dots \times \GL_{k_r}(F)$.
The Jacquet module of $\pair{\rho; x, \dots, y}$ with respect to a maximal parabolic subgroup 
is computed by Zelevinsky.

\begin{prop}[{\cite[Proposition 9.5]{Z}}]
Suppose that $x \not= y$ and set $k = d(x-y+1)$.
Then $\Jac_{(k_1, k_2)}(\pair{\rho; x, \dots, y}) = 0$ unless $k_1 \equiv 0 \bmod d$.
If $k_1 = dm$ with $1 \leq m \leq x-y$, we have
\[
\Jac_{(k_1, k_2)}(\pair{\rho; x, \dots, y}) 
= \pair{\rho; x, \dots, x-(m-1)} \boxtimes \pair{\rho; x - m, \dots, y}. 
\]
\end{prop}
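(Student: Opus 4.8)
The plan is to separate the statement into a divisibility assertion, a lower bound for the Jacquet module, and a matching upper bound; the last is the substantial point. For the divisibility I would use cuspidal supports. Since $\pair{\rho;x,\dots,y}$ embeds into $\rho|\cdot|^x\times\dots\times\rho|\cdot|^y$, its cuspidal support is the multiset $\rho|\cdot|^x+\dots+\rho|\cdot|^y$ of representations of $\GL_d(F)$. By transitivity of Jacquet functors, every irreducible subquotient $\tau_1\boxtimes\tau_2$ of $\Jac_{(k_1,k_2)}(\pair{\rho;x,\dots,y})$ has the cuspidal support of $\tau_1$ equal to a sub-multiset of $\{\rho|\cdot|^x,\dots,\rho|\cdot|^y\}$, each of whose members has dimension $d$; hence $d\mid k_1$, and the Jacquet module vanishes otherwise. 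Writing $k_1=dm$, the requirement $k_1,k_2>0$ forces $1\le m\le x-y$.

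For the lower bound, put $\pi_1=\pair{\rho;x,\dots,x-(m-1)}$ and $\pi_2=\pair{\rho;x-m,\dots,y}$. Inducing in stages from the defining embeddings of $\pi_1$ and $\pi_2$ shows that $\pi_1\times\pi_2$ is a submodule of $\rho|\cdot|^x\times\dots\times\rho|\cdot|^y$, which has finite length and, by definition, simple socle $\pair{\rho;x,\dots,y}$. Since the socle of a finite-length module is an essential submodule, the irreducible module $\pair{\rho;x,\dots,y}$ must lie inside $\pi_1\times\pi_2$; Frobenius reciprocity then gives
\[
\mathrm{Hom}\bigl(\Jac_{(dm,k-dm)}(\pair{\rho;x,\dots,y}),\ \pi_1\boxtimes\pi_2\bigr)\cong\mathrm{Hom}\bigl(\pair{\rho;x,\dots,y},\ \pi_1\times\pi_2\bigr)\ne0,
\]
so $\pi_1\boxtimes\pi_2$ is a quotient, hence a composition factor, of $\Jac_{(dm,k-dm)}(\pair{\rho;x,\dots,y})$.

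For the upper bound I would induct on the length $n=x-y+1$ of the segment. Applying $\Jac_{(dm,k-dm)}$ to $\pair{\rho;x,\dots,y}\hookrightarrow\pi_1\times\pi_2$ and expanding the right-hand side by the geometric lemma, while invoking the Proposition for the strictly shorter segments $[x,x-(m-1)]$ and $[x-m,y]$ (the inductive hypothesis), one writes $\Jac_{(dm,k-dm)}(\pi_1\times\pi_2)$ as an explicit finite direct sum $\bigoplus_i T_i$, with $T_m=\pi_1\boxtimes\pi_2$ and, for $i<m$, $T_i$ a tensor product of two induced representations built from the four consecutive sub-segments of $[x,y]$ obtained by cutting its top $m$ entries after $i$ steps and its bottom $n-m$ entries after $m-i$ steps. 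A short check with Theorem \ref{zel} shows that the two segments involved on each side of each $T_i$ are unlinked, so every $T_i$ is irreducible; as the $T_i$ have pairwise distinct cuspidal support on the first factor, $\bigoplus_i T_i$ is semisimple and multiplicity free. Hence its submodule $\Jac_{(dm,k-dm)}(\pair{\rho;x,\dots,y})$ is semisimple, contains $T_m$, and equals $\bigoplus_{i\in S}T_i$ for some $S\ni m$. If $i\in S$ with $i<m$, then $T_i$ is also a quotient, so by Frobenius reciprocity $\pair{\rho;x,\dots,y}$ embeds into a product $\pair{\Delta_1}\times\pair{\Delta_2}\times\pair{\Delta_3}\times\pair{\Delta_4}$ of the (twisted) Steinberg representations attached to those four sub-segments; but a representation $\pair{\rho;x,\dots,y}$ can embed into such a product of Steinberg-type representations only when $\Delta_1,\dots,\Delta_r$ are the consecutive pieces of $[x,y]$ listed in decreasing order, and for $i<m$ the second of our four segments does not begin directly below the first. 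This contradiction gives $S=\{m\}$, i.e.\ $\Jac_{(dm,k-dm)}(\pair{\rho;x,\dots,y})=\pi_1\boxtimes\pi_2$ (which in particular shows it is irreducible).

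The main obstacle is this last elimination, which rests on the stated embedding criterion for Steinberg-type representations into products of Steinberg-type representations; I would deduce that criterion from Theorem \ref{zel} and the Langlands classification (equivalently, from Zelevinsky's classification of $\Irr_\rho(\GL)$), itself by the same induction. An alternative route to the same point is to reduce, via transitivity of Jacquet functors, to the single assertion that $\Jac_{(d,\dots,d)}(\pair{\rho;x,\dots,y})$ equals the one term $\rho|\cdot|^x\boxtimes\dots\boxtimes\rho|\cdot|^y$ with multiplicity one, together with the injectivity of the full Jacquet functor on $\Irr_\rho(\GL)$.
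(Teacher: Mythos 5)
The statement you are proving is not actually proved in the paper: it is quoted verbatim from Zelevinsky \cite[Proposition~9.5]{Z} with no argument supplied, so the comparison is necessarily to the literature rather than to a proof in the text.

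Your first two steps are fine. The cuspidal-support argument correctly reduces to $k_1\equiv 0\bmod d$, and the socle-essentiality plus Frobenius reciprocity argument does show that $\pi_1\boxtimes\pi_2$ is a quotient of $\Jac_{(dm,k-dm)}(\pair{\rho;x,\dots,y})$. (For the geometric-lemma expansion, your list of $T_i$'s is the right one and they are indeed irreducible and pairwise non-isomorphic: not only the first factors but also the second factors of the $T_i$ have pairwise distinct cuspidal supports, which one must check.)

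However, the two ingredients you lean on to finish are not available at the point you invoke them. (a) You assert that $\Jac_{(dm,k-dm)}(\pi_1\times\pi_2)$ is a \emph{direct sum} $\bigoplus_i T_i$ on the grounds that the first factors of the $T_i$ are pairwise non-isomorphic. That gives multiplicity-freeness, but not semisimplicity: by the K\"unneth decomposition of $\mathrm{Ext}^1$ on a product group, distinctness of the first factors alone kills only one summand of $\mathrm{Ext}^1_M(T_i,T_j)$. You would need to also record that the second factors of the $T_i$ are pairwise non-isomorphic (true, but not observed) before the claim of semisimplicity becomes legitimate, and without it the Frobenius-reciprocity elimination cannot be applied to arbitrary composition factors. (b) The ``embedding criterion'' you invoke --- that $\pair{\rho;x,\dots,y}$ can embed in $\pair{\Delta_1}\times\dots\times\pair{\Delta_r}$ only when the $\Delta_i$ are the consecutive pieces of $[x,y]$ listed from the top down --- is exactly the content of the proposition you are proving (in its iterated form), and you propose to deduce it from Lemma~\ref{GL}, i.e.\ from Zelevinsky's classification of $\Irr_\rho(\GL)$ (\cite[Proposition~9.6]{Z}). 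But \cite[Prop.~9.6]{Z} is proved \emph{from} \cite[Prop.~9.5]{Z}, so this is circular. Your ``alternative route'' via the fully cuspidal Jacquet module is the right instinct --- if one knows $\Jac_{(d,\dots,d)}(\pair{\rho;x,\dots,y})=\rho|\cdot|^{x}\boxtimes\cdots\boxtimes\rho|\cdot|^{y}$ with multiplicity one, a length count rules out every $T_i$ with $i<m$ --- but that identity is the $m=1$ case iterated, which is again part of what is to be proved and so must be established first by an independent argument (in Zelevinsky this is done via the derivative functors and the mirabolic subgroup, which is the part of his machinery your sketch does not replace). Also, ``injectivity of the full Jacquet functor on $\Irr_\rho(\GL)$'' is false as stated; what is used is that the full Jacquet module of $\pair{\rho;x,\dots,y}$ is the specific multiplicity-one string above.

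In short: the scaffolding (divisibility, lower bound, geometric-lemma upper bound with multiplicity-free composition factors) is correct and standard, but the elimination of the extra constituents --- which you yourself flag as the main obstacle --- is currently supported only by an appeal that is circular, and the semisimplicity claim that the elimination relies on is stated with an insufficient justification.
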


If 
\[
\semi\Jac_{(d, k-d)}(\tau) = \bigoplus_{i \in I} \tau_i \boxtimes \tau'_i, 
\]
for $x \in \R$, we set 
\[
\Jac_{\rho|\cdot|^x}(\tau) = \bigoplus_{\substack{i \in I\\ \tau_i \cong \rho|\cdot|^x }}\tau'_i.
\]
For $\ub{x} = (x_1, \dots, x_r) \in \R^r$, we also define 
\[
\Jac_{\rho|\cdot|^{\ub{x}}} = \Jac_{\rho|\cdot|^{x_r}} \circ \dots \circ \Jac_{\rho|\cdot|^{x_1}}.
\]
This is a functor 
\[
\Jac_{\rho|\cdot|^{\ub{x}}} \colon \Rep(\GL_k(F)) \rightarrow \Rep(\GL_{k-dr}(F)).
\]
In particular, 
when $\tau \in \Rep(\GL_{dm}(F))$ is of finite length, for $\ub{x} = (x_1, \dots, x_m) \in \R^m$, 
the Jacquet module $\Jac_{\rho|\cdot|^{\ub{x}}}(\tau)$ is 
a representation of the trivial group $\GL_0(F)$ of finite length so that
it is a finite dimensional $\C$-vector space.

\begin{lem}\label{dim}
Let $\ub{x} = (x_1, \dots, y_1, \dots, x_t, \dots, y_t) \in \Omega_m$ 
such that $x_{i-1} \leq x_{i}$ for $1 < i \leq t$, and $y_{i-1} \leq y_{i}$ if $x_{i-1} = x_i$
as in Lemma \ref{GL}.
For $(x,y) \in \{(x_i,y_i)\}_i$, 
if we set $m_{(x,y)} = \#\{i \ |\ (x_i,y_i) = (x,y)\}$, 
then for $\ub{y} \in \Omega_m$, we have
\[
\dim_{\C} \Jac_{\rho|\cdot|^{\ub{y}}}(\tau_{\ub{x}}) = 
\left\{
\begin{aligned}
&\prod_{(x,y) \in \{(x_i,y_i)\}_i} m_{(x,y)}! \iif \ub{x'} = \ub{x}, \\
&0 \iif \ub{y} < \ub{x}.
\end{aligned}
\right.
\]
Here, we regard $\R^m$ as a totally ordered set with respect to the lexicographical order.
\end{lem}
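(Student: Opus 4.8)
(We read the first case of the displayed formula as $\ub{y}=\ub{x}$.) The plan is to pass to the Grothendieck group of finite length representations, where $\dim_\C\Jac_{\rho|\cdot|^{\ub{y}}}(-)$ is additive by exactness of the Jacquet functor, and to compare $\tau_{\ub{x}}$ with the standard module $\Delta_{\ub{x}}=\Delta_1\times\dots\times\Delta_t$ in which it embeds by Lemma \ref{GL}. First I would treat the standard-module case: iterating the Leibniz rule $\Jac_{\rho|\cdot|^{x}}(\sigma\times\sigma')=\Jac_{\rho|\cdot|^{x}}(\sigma)\times\sigma'+\sigma\times\Jac_{\rho|\cdot|^{x}}(\sigma')$ in the Grothendieck group, together with the preceding proposition (Zelevinsky's description of $\Jac$ of a segment representation), one obtains
\[
\Jac_{\rho|\cdot|^{\ub{y}}}(\Delta_{\ub{x}})=\sum_{(A_1,\dots,A_t)}\prod_{i=1}^{t}\Jac_{\rho|\cdot|^{\ub{y}_{A_i}}}(\Delta_i),
\]
the sum over ordered partitions $(A_1,\dots,A_t)$ of $\{1,\dots,m\}$, with $\ub{y}_{A_i}$ the subsequence of $\ub{y}$ indexed by $A_i$; moreover $\Jac_{\rho|\cdot|^{\ub{z}}}(\Delta_i)$ lands in $\GL_0$ and is then $1$-dimensional exactly when $\ub{z}$ is the decreasing sequence $(x_i,x_i-1,\dots,y_i)$, and otherwise has positive $\GL$-degree or is zero. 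Hence $\dim_\C\Jac_{\rho|\cdot|^{\ub{y}}}(\Delta_{\ub{x}})$ equals the number of ways to write the sequence $\ub{y}$ as a shuffle of the $t$ decreasing sequences $(x_i,\dots,y_i)$.

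The combinatorial core is then to show: (a) for $\ub{y}\in\Omega_m$ such a shuffle exists only if $\ub{y}\geq\ub{x}$ lexicographically -- equivalently, the $\Omega_m$-ordered concatenation $\ub{x}=\Delta_1\cdots\Delta_t$ is the lexicographically smallest element of $\Omega_m$ that is a shuffle of $\Delta_1,\dots,\Delta_t$; and (b) the number of shuffles of $\Delta_1,\dots,\Delta_t$ producing $\ub{x}$ itself equals $\prod_{(x,y)}m_{(x,y)}!$, the only freedom being to permute equal segments among their blocks in $\ub{x}$. Both are purely combinatorial statements about $\Omega_m$; for (b) the content is that in $\ub{x}$ each block must be filled, contiguously, by a single segment-copy -- via a ``$\tau(i)\geq i\Rightarrow\tau=\id$''-type argument applied value by value -- apart from permutations of identical segments.

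To transfer from $\Delta_{\ub{x}}$ to $\tau_{\ub{x}}$: the inclusion $\tau_{\ub{x}}\hookrightarrow\Delta_{\ub{x}}$ and exactness of $\Jac$ yield $\dim_\C\Jac_{\rho|\cdot|^{\ub{y}}}(\tau_{\ub{x}})\leq\dim_\C\Jac_{\rho|\cdot|^{\ub{y}}}(\Delta_{\ub{x}})$ for every $\ub{y}$, which with the two steps above already gives the vanishing for $\ub{y}<\ub{x}$ and the bound $\dim_\C\Jac_{\rho|\cdot|^{\ub{x}}}(\tau_{\ub{x}})\leq\prod_{(x,y)}m_{(x,y)}!$. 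For the reverse inequality I would use the triangularity of the Langlands--Zelevinsky classification, $\Delta_{\ub{x}}=\tau_{\ub{x}}+\sum_{\ub{x}'>\ub{x}}c_{\ub{x}'}\tau_{\ub{x}'}$ in the Grothendieck group, where each $\ub{x}'$ occurring is obtained from $\ub{x}$ by intersection--union operations on linked segments and hence satisfies $\ub{x}'>\ub{x}$ lexicographically; applying $\dim_\C\Jac_{\rho|\cdot|^{\ub{x}}}$ and using the vanishing just proved for each $\tau_{\ub{x}'}$ (as $\ub{x}<\ub{x}'$) forces $\dim_\C\Jac_{\rho|\cdot|^{\ub{x}}}(\tau_{\ub{x}})=\dim_\C\Jac_{\rho|\cdot|^{\ub{x}}}(\Delta_{\ub{x}})=\prod_{(x,y)}m_{(x,y)}!$.

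The main obstacle is steps (a)--(b): extracting the clean shuffle count from the representation-theoretic computation, and proving that the $\Omega_m$-ordered concatenation is the minimal shuffle and is realized in exactly $\prod_{(x,y)}m_{(x,y)}!$ ways -- the tension between the sortedness forced by membership in $\Omega_m$ and the freedom of shuffling is delicate, particularly when segments of different lengths or with overlapping ranges of exponents are present. A secondary point needing care is the precise triangularity used above, with respect to the lexicographic (rather than the coarser Zelevinsky) order; this is standard, but if preferred one can instead run a downward induction on $\ub{x}$ in $\Omega_m$, establishing the vanishing first and then using it to kill the higher constituents.
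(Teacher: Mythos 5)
Your sketch is sound, and its combinatorial core is the same as the paper's. The paper's proof establishes the one-step recursion $\Jac_{\rho|\cdot|^x}(\Delta_{\ub{x}})=\sum_i\Delta_{\ub{x'_i}}$ (remove one leading $x$ from each segment starting at $x$) and then says ``by induction on $m$''; iterating that recursion is precisely your closed-form shuffle count, and in both formulations the crux left to the reader is exactly your (a) and (b): that a shuffle landing in $\Omega_m$ must be lexicographically $\geq\ub{x}$, and that $\ub{x}$ itself is reached in exactly $\prod m_{(x,y)}!$ ways. Where you genuinely differ from (and spell out more than) the paper is the transfer from $\Delta_{\ub{x}}$ to $\tau_{\ub{x}}$. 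You pass through the Langlands--Zelevinsky triangularity $\Delta_{\ub{x}}=\tau_{\ub{x}}+\sum_{\ub{x'}>\ub{x}}c_{\ub{x'}}\tau_{\ub{x'}}$ in the Grothendieck group and kill the tail using the vanishing you just proved; this does work, but it requires the extra inputs that $\tau_{\ub{x}}$ occurs with multiplicity one and that every other constituent is strictly lex-larger, a point you rightly flag (it can be deduced from (a) together with $\Jac_{\rho|\cdot|^{\ub{x'}}}(\tau_{\ub{x'}})\neq 0$). An alternative that avoids any discussion of the other constituents, and is closer in spirit to what the paper has in mind, is to sandwich: the inclusion $\tau_{\ub{x}}\hookrightarrow\Delta_{\ub{x}}$ gives $\dim_\C\Jac_{\rho|\cdot|^{\ub{x}}}(\tau_{\ub{x}})\leq\prod m_{(x,y)}!$, while Frobenius reciprocity applied to $\tau_{\ub{x}}\hookrightarrow(\Delta_1')^{\times m_1}\times\cdots\times(\Delta_l')^{\times m_l}$ forces the irreducible $(\Delta_1')^{\boxtimes m_1}\boxtimes\cdots\boxtimes(\Delta_l')^{\boxtimes m_l}$ to occur in $\semi\Jac(\tau_{\ub{x}})$, giving the matching lower bound $\prod m_j!$ directly. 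Both routes are fine; yours is more conceptual, the sandwich is more economical. You are also right that the displayed ``$\ub{x'}=\ub{x}$'' in the statement is a misprint for ``$\ub{y}=\ub{x}$''.
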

\begin{proof}
Fix $x \in \R$.
We note that 
$\Jac_{\rho|\cdot|^x}(\Delta_{\ub{x}}) \not= 0$ if and only if $x \in \{x_1, \dots, x_t\}$.
Let $\ub{x'_1}, \dots, \ub{x'_{l}} \in \Omega_{m-1}$ be the elements 
obtained by removing $x$ from a component of $\ub{x}$, and rearranging it
(so that $l = \#\{i\ |\ x_i = x\}$).
Then $\Jac_{\rho|\cdot|^x}(\Delta_{\ub{x}}) = \sum_{i=1}^l \Delta_{\ub{x'_i}}$.
Using this, we obtain the lemma by induction on $m$.
\if()%%%%%%%%%5
\par

We write
\[
\Delta_1 \times \dots \times \Delta_k \cong 
\underbrace{(\Delta_1' \times \dots \times \Delta_1')}_{m_1} 
\times \dots \times 
\underbrace{(\Delta_l' \times \dots \times \Delta_l')}_{m_l}  
\]
with $\Delta_i' = \pair{\rho; x_i', \dots, y_i'}$ such that 
$x'_{i-1} \leq x'_{i}$ for $1 < i \leq l$,
and $y'_{i-1} < y'_i$ if $x'_{i-1} = x'_{i}$.
Since $\tau_{\ub{x}}$ is contained in this induced representation, 
we have
\[
\Jac_{\rho|\cdot|^{\ub{x}}}(\tau_{\ub{x}}) \subset 
\Jac_{\rho|\cdot|^{\ub{x}}}(
(\Delta_1' \times \dots \times \Delta_1')
\times \dots \times 
(\Delta_l' \times \dots \times \Delta_l')).
\]
Moreover, we have
\[
\semi\Jac_{(d(x'_1-y'_1+1)m_1, \dots, d(x'_l-y'_l+1)m_l)}(\tau_{\ub{x}}) \supset 
(\Delta_1' \times \dots \times \Delta_1')
\otimes \dots \otimes 
(\Delta_l' \times \dots \times \Delta_l')
\]
since the right hand side is irreducible.
This implies that 
\[
\Jac_{\rho|\cdot|^{\ub{x}}}(\tau_{\ub{x}}) \supset 
\Jac_{\rho|\cdot|^{\ub{x}}}(
(\Delta_1' \times \dots \times \Delta_1')
\otimes \dots \otimes 
(\Delta_l' \times \dots \times \Delta_l')).
\]
Since 
\begin{align*}
&\dim_\C \Jac_{\rho|\cdot|^{\ub{x}}}(
(\Delta_1' \times \dots \times \Delta_1')
\times \dots \times 
(\Delta_l' \times \dots \times \Delta_l'))
\\&=
\dim_\C \Jac_{\rho|\cdot|^{\ub{x}}}(
(\Delta_1' \times \dots \times \Delta_1')
\otimes \dots \otimes 
(\Delta_l' \times \dots \times \Delta_l'))
=
\prod_{j=1}^l m_j!, 
\end{align*}
we have $\dim_\C \Jac_{\rho|\cdot|^{\ub{x}}}(\tau_{\ub{x}}) = \prod_{j=1}^l m_j!$.
\par

As in the proof of \cite[Proposition 9.6]{Z}, 
the element $\ub{x} \in \Omega_m$ is characterized by 
the lowest element in the set
\begin{align*}
&\{\ub{x'} \in \R^m\ |\ \Jac_{\rho|\cdot|^{\ub{x'}}}(\tau_{\ub{x}}) \not= 0\}
\end{align*}
with respect to the lexicographical order.
Hence we obtain the last assertion.
\fi%%%%%%%%%%%%%
\end{proof}

When $\ub{y} > \ub{x}$, 
one can also compute $\dim_{\C} \Jac_{\rho|\cdot|^{\ub{y}}}(\tau_{\ub{x}})$ inductively.
\par

Let $\RR_k$ be the Grothendieck group of the category of 
smooth representations of $\GL_k(F)$ of finite length.
By the semisimplification, 
we identify the objects in this category with elements in $\RR_k$.
Elements in $\Irr(\GL_k(F))$ form a $\Z$-basis of $\RR_k$.
Set $\RR = \oplus_{k \geq 0} \RR_k$.
The induction functor gives a product 
\[
m \colon \RR \otimes \RR \rightarrow \RR, \ 
\tau_1 \otimes \tau_2 \mapsto \semi(\tau_1 \times \tau_2). 
\]
This product makes $\RR$ an associative commutative ring.
On the other hand, the Jacquet functor gives a coproduct
\begin{align*}
m^* \colon &\RR \rightarrow \RR \otimes \RR
\end{align*}
which is defined by the $\Z$-linear extension of 
\[
\Irr(\GL_k(F)) \ni \tau \mapsto \sum_{k_1 = 0}^k \semi \Jac_{(k_1, k-k_1)}(\tau).
\]
Then $m$ and $m^*$ make $\RR$ a graded Hopf algebra, 
i.e., $m^* \colon \RR \rightarrow \RR \otimes \RR$ is a ring homomorphism.

%\subsection{Representations of $\SO_{2n+1}$ and $\Sp_{2n}$}
\subsection{Representations of $\SO_{2n+1}$ and $\Sp_{2n}$}\label{so-sp}
We set $G$ to be split $\SO_{2n+1}$ or $\Sp_{2n}$, 
i.e., $G$ is the split algebraic group of type $B_n$ or $C_n$.
Fix a Borel subgroup of $G(F)$, 
and let $P=MN$ be a standard parabolic subgroup of $G(F)$.
Then the Levi part $M$ is of the form 
$\GL_{k_1}(F) \times \dots \times \GL_{k_r}(F) \times G_{0}(F)$
with $G_0 = \SO_{2n_0+1}$ or $G_0 = \Sp_{2n_0}$ such that $k_1+\dots+k_r+n_0=n$.
For a smooth representation $\tau_1 \boxtimes \dots \boxtimes \tau_r \boxtimes \pi_0$ of $M$, 
we denote the normalized induced representation by 
\[
\tau_1 \times \dots \times \tau_r \rtimes \pi_0 
= \Ind_P^{G(F)}( \tau_1 \boxtimes \dots \boxtimes \tau_r \boxtimes \pi_0 ).
\]
The functor $\Ind_{P}^{G(F)} \colon \Rep(M) \rightarrow \Rep(G(F))$ is exact. 
\par

On the other hand, for a smooth representation $\pi$ of $G(F)$, 
we denote the normalized Jacquet module with respect to $P$ by 
\[
\Jac_P(\pi), 
\]
and its semisimplification by $\semi\Jac_P(\pi)$.
The functor $\Jac_{P} \colon \Rep(G(F)) \rightarrow \Rep(M)$ is exact. 
The Frobenius reciprocity asserts that 
\[
\mathrm{Hom}_{G(F)}(\pi, \Ind_P^{G(F)}(\sigma)) \cong \mathrm{Hom}_{M}(\Jac_P(\pi), \sigma)
\]
for $\pi \in \Rep(G(F))$ and $\sigma \in \Rep(M)$.

The maximal parabolic subgroup with Levi $\GL_{k}(F) \times G_{0}(F)$ 
is denoted by $P_k = M_k N_k$.
If
\[
\semi\Jac_{P_d}(\pi) = \bigoplus_{i \in I} \tau_i \boxtimes \pi_i, 
\]
for a real number $x$, 
we set
\[
\Jac_{\rho|\cdot|^x}(\pi) = \bigoplus_{\substack{i \in I\\ \tau_i \cong \rho|\cdot|^x }}\pi_i.
\]
This is a representation of $G_{0}(F)$, which is $\SO_{2(n-d)+1}(F)$ or $\Sp_{2(n-d)}(F)$.
Also, for $\ub{x} = (x_1, \dots, x_r) \in \R^r$, we set
\[
\Jac_{\rho|\cdot|^{\ub{x}}}(\pi) = 
\Jac_{\rho|\cdot|^{x_1}, \dots, \rho|\cdot|^{x_r}}(\pi) = 
\Jac_{\rho|\cdot|^{x_r}} \circ \dots \circ \Jac_{\rho|\cdot|^{x_1}}(\pi).
\]
We recall some properties of $\Jac_{\rho|\cdot|^{x_1}, \dots, \rho|\cdot|^{x_r}}$.

\begin{lem}[{\cite[Lemmas 5.3, 5.6]{X1}}]\label{lem5}
Let $\pi$ be an irreducible representation of $G(F)$.

\begin{enumerate}
\item
Suppose that $\Jac_{\rho|\cdot|^{x_1}, \dots, \rho|\cdot|^{x_r}}(\pi)$ is nonzero.
Then there exists an irreducible constituent $\sigma$ of $\Jac_{\rho|\cdot|^{x_1}, \dots, \rho|\cdot|^{x_r}}(\pi)$ 
such that we have an inclusion
\[
\pi \hookrightarrow \rho|\cdot|^{x_1} \times \dots \times \rho|\cdot|^{x_r} \rtimes \sigma.
\]

\item
If $|x-y| \not= 1$, then $\Jac_{\rho|\cdot|^{x}, \rho|\cdot|^{y}}(\pi) = \Jac_{\rho|\cdot|^{y}, \rho|\cdot|^{x}}(\pi)$.
\end{enumerate}
\end{lem}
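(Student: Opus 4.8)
The plan is to deduce both statements from the structure of the Jacquet functors together with Frobenius reciprocity, keeping track of cuspidal supports along the $\GL$-direction. For part (1), I would argue by induction on $r$. When $r=1$, the hypothesis is that $\Jac_{\rho|\cdot|^{x_1}}(\pi)\neq 0$, which by definition means $\semi\Jac_{P_d}(\pi)$ contains a summand $\rho|\cdot|^{x_1}\boxtimes\sigma'$; picking an irreducible constituent $\sigma$ of $\sigma'$, exactness of $\Jac_{P_d}$ plus Frobenius reciprocity gives a nonzero map $\pi\to\rho|\cdot|^{x_1}\rtimes\sigma$, which is injective since $\pi$ is irreducible. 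For the inductive step, set $\pi_1$ to be an irreducible constituent of $\Jac_{\rho|\cdot|^{x_1}}(\pi)$ lying below a suitable iterated Jacquet module so that $\Jac_{\rho|\cdot|^{x_2},\dots,\rho|\cdot|^{x_r}}(\pi_1)\neq 0$ (such a constituent exists by exactness of all the functors involved and the assumption that the full iterated Jacquet module is nonzero). Apply the inductive hypothesis to $\pi_1$ to get $\pi_1\hookrightarrow\rho|\cdot|^{x_2}\times\dots\times\rho|\cdot|^{x_r}\rtimes\sigma$ with $\sigma$ an irreducible constituent of $\Jac_{\rho|\cdot|^{x_2},\dots,\rho|\cdot|^{x_r}}(\pi_1)$, hence of $\Jac_{\rho|\cdot|^{x_1},\dots,\rho|\cdot|^{x_r}}(\pi)$. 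Then $\pi\hookrightarrow\rho|\cdot|^{x_1}\rtimes\pi_1\hookrightarrow\rho|\cdot|^{x_1}\times\dots\times\rho|\cdot|^{x_r}\rtimes\sigma$, where the first inclusion is the $r=1$ case applied to $\pi$ and $\pi_1$, and the second follows by exactness of $\rho|\cdot|^{x_1}\rtimes(-)$. The one subtle point is checking that the composite inclusion remains nonzero and that the same $\sigma$ can be used at both stages; this is handled by choosing $\pi_1$ as a constituent through which the nonvanishing of the full iterated Jacquet module factors, which is possible precisely because $\Jac_{P_d}$ is exact.

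For part (2), the strategy is to reduce to a computation inside $\GL$ using the compatibility between the Jacquet functor on $G(F)$ and the coproduct $m^*$ on $\RR$, i.e. a geometric-lemma / Tadić-type bookkeeping. Applying $\semi\Jac_{P_{2d}}$ to $\pi$ and then further decomposing along the $\GL_{2d}$-factor into $\GL_d\times\GL_d$, the terms that contribute to $\Jac_{\rho|\cdot|^{x},\rho|\cdot|^{y}}(\pi)$ versus $\Jac_{\rho|\cdot|^{y},\rho|\cdot|^{x}}(\pi)$ differ only in how a length-two cuspidal string $(\rho|\cdot|^{x},\rho|\cdot|^{y})$ in the $\GL_{2d}$-part is split; by the Hopf-algebra structure on $\RR$, these two orders of extraction agree as soon as $\rho|\cdot|^{x}\times\rho|\cdot|^{y}$ is irreducible, equivalently (by Theorem \ref{zel}) as soon as $|x-y|\neq 1$. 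Concretely: for an irreducible $\tau\in\Irr(\GL_{2d}(F))$ appearing in $\semi\Jac_{P_{2d}}(\pi)$ together with a factor $\pi_0$, one has $\Jac_{\rho|\cdot|^x}\circ\Jac_{\rho|\cdot|^y}$ and $\Jac_{\rho|\cdot|^y}\circ\Jac_{\rho|\cdot|^x}$ applied to $\tau$ computing the same multiplicity of $\rho|\cdot|^x\otimes\rho|\cdot|^y$-type data in $m^*(\tau)$, because in the absence of linkage the induced representation $\rho|\cdot|^x\times\rho|\cdot|^y$ is irreducible and the two iterated Jacquet modules are literally computed by the same summand of the coproduct. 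I would phrase this cleanly by noting $\Jac_{\rho|\cdot|^x,\rho|\cdot|^y}(\pi)$ is the $\pi_0$-isotypic contribution after extracting $\rho|\cdot|^y\boxtimes(\rho|\cdot|^x\boxtimes\pi_0)$ from $\semi\Jac_{P_{2d}}(\pi)$, and symmetrically for the other order, then invoking irreducibility of $\rho|\cdot|^x\times\rho|\cdot|^y$ to identify the two.

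The main obstacle I anticipate is part (2): making the "the two orders extract from the same piece" argument rigorous without writing out the full geometric lemma for $\SO_{2n+1}$ and $\Sp_{2n}$. The clean way around this is to appeal to the associativity of iterated Jacquet functors (composition of adjoints to composition of parabolic inductions) together with the fact that for $|x-y|\neq 1$ the only irreducible subquotients of $\rho|\cdot|^x\times\rho|\cdot|^y$ is $\rho|\cdot|^x\times\rho|\cdot|^y$ itself, so that $\semi\Jac_{(d,d)}$ and $\semi\Jac_{(d,d)}$ composed in either order with the swap give the same multiplicities; since $\Jac_{\rho|\cdot|^x,\rho|\cdot|^y}(\pi)$ only records the $\pi_0$ appearing after this $\GL$-side extraction, the two sides coincide. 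If instead one wants to cite \cite{X1} directly, this is exactly \cite[Lemma 5.6]{X1}, and the proof there is the one to follow; I would present the above as a sketch and defer to that reference for the $\GL$-combinatorics.
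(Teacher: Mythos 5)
The paper does not actually prove this lemma; it quotes it from Xu \cite[Lemmas 5.3, 5.6]{X1}. So there is no internal proof to compare against, and the question is whether your sketch is sound.

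Your outline for part (1) has the right shape but glosses over the one step that actually requires an argument. In the base case $r=1$ you pass from ``$\semi\Jac_{P_d}(\pi)$ contains the constituent $\rho|\cdot|^{x_1}\boxtimes\sigma$'' to ``there is a nonzero map $\pi\to\rho|\cdot|^{x_1}\rtimes\sigma$.'' Frobenius reciprocity gives such a map only from a \emph{quotient} of $\Jac_{P_d}(\pi)$, not from an arbitrary subquotient, and exactness of $\Jac_{P_d}$ does not bridge that gap. What does bridge it is the decomposition of $\Jac_{P_d}(\pi)$ (a finite-length representation of $\GL_d(F)\times G_0(F)$) into direct summands according to the Bernstein component and the generalized central character of the $\GL_d$-factor: since $\rho$ is fixed, the constituents whose $\GL_d$-factor is exactly $\rho|\cdot|^{x_1}$ are separated from all the others and form a direct summand $J\cong\rho|\cdot|^{x_1}\boxtimes X$, with $\semi X=\Jac_{\rho|\cdot|^{x_1}}(\pi)$. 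Taking an irreducible quotient of $J$ gives a quotient of the full Jacquet module of the required form, and \emph{then} Frobenius reciprocity applies. The same omission reappears in your inductive step, where you need the chosen $\pi_1$ to simultaneously be a quotient of $X$ (so that $\pi\hookrightarrow\rho|\cdot|^{x_1}\rtimes\pi_1$) and to satisfy $\Jac_{\rho|\cdot|^{x_2},\dots,\rho|\cdot|^{x_r}}(\pi_1)\neq 0$; your phrase ``choosing $\pi_1$ as a constituent through which the nonvanishing factors'' gestures at the resolution but does not explain why such a choice exists. The clean fix is to avoid the induction on $r$ altogether: form the direct summand $J'$ of $\Jac_{P'}(\pi)$ (with $P'$ the parabolic of Levi $\GL_d^r\times G_{00}$) on which the $i$-th $\GL_d$-factor acts through the central character of $\rho|\cdot|^{x_i}$, write $J'\cong\rho|\cdot|^{x_1}\boxtimes\dots\boxtimes\rho|\cdot|^{x_r}\boxtimes Y$ with $\semi Y=\Jac_{\rho|\cdot|^{x_1},\dots,\rho|\cdot|^{x_r}}(\pi)$, take an irreducible quotient $\sigma$ of $Y$, and apply Frobenius reciprocity once. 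Without the direct-summand observation, your argument literally does not produce the required map.

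Part (2) is essentially correct. Writing $\semi\Jac_{P_{2d}}(\pi)=\sum_i\tau_i\boxtimes\pi_i$ and using transitivity of Jacquet functors, both $\Jac_{\rho|\cdot|^x,\rho|\cdot|^y}(\pi)$ and $\Jac_{\rho|\cdot|^y,\rho|\cdot|^x}(\pi)$ equal $\sum_i c_i\cdot\pi_i$, where $c_i$ is the multiplicity of $\rho|\cdot|^x\boxtimes\rho|\cdot|^y$ (resp.\ $\rho|\cdot|^y\boxtimes\rho|\cdot|^x$) in $\semi\Jac_{(d,d)}(\tau_i)$; and when $|x-y|\neq 1$ (including $x=y$), Theorem \ref{zel} makes $\rho|\cdot|^x\times\rho|\cdot|^y$ irreducible, so the only $\tau_i$ contributing is $\tau_i\cong\rho|\cdot|^x\times\rho|\cdot|^y$, for which the two multiplicities agree. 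That is a complete argument. Your hedge about ``making this rigorous without the geometric lemma'' is unnecessary; what you describe \emph{is} the geometric-lemma computation.
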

\par

Let $\RR_n(G)$ be the Grothendieck group of the category of 
smooth representations of $G(F)$ of finite length, 
where $n = \mathrm{rank}(G)$, i.e., $G = \SO_{2n+1}$ or $G = \Sp_{2n}$.
Set $\RR(G) = \oplus_{n \geq 0} \RR_n(G)$.
The parabolic induction defines a module structure
\[
\rtimes \colon \RR \otimes \RR(G) \rightarrow \RR(G), \ \tau \otimes \pi \mapsto \semi(\tau \rtimes \pi), 
\]
and the Jacquet functor defines a comodule structure
\[
\mu^* \colon \RR(G) \rightarrow \RR \otimes \RR(G)
\]
by
\[
\Irr(G(F)) \ni \pi \mapsto \sum_{k=0}^{\mathrm{rank}(G)} \semi\Jac_{P_k}(\pi).
\]
\par

When 
\[
\mu^*(\pi) = \sum_{i \in I} \tau_i \otimes \pi_i, 
\]
we define $\mu^*_\rho(\pi)$ by 
\[
\mu^*_\rho(\pi) = \sum_{\substack{i \in I \\ \tau_i \in \Irr_{\rho}(\GL_{dm}(F))}} \tau_i \otimes \pi_i.
\]

\begin{lem}\label{circ}
If we define $\iota \colon \RR(G) \rightarrow \RR \otimes \RR(G)$ by $\pi \mapsto \1_{\GL_0(F)} \otimes \pi$, 
we have
\[
\mu^* = \circ_{\rho} \left( (m \otimes \id) \circ (\id \otimes \mu^*_\rho) \right) \circ \iota, 
\]
where $\rho$ runs over all irreducible unitary supercuspidal representations of $\GL_d(F)$ 
for $d > 0$.
\end{lem}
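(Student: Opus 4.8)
The plan is to keep track of the supercuspidal support on the general linear side and to peel it off one line at a time. Recall that every $\tau\in\Irr(\GL_k(F))$ factors uniquely, up to reordering, as $\tau\cong\tau^{(1)}\times\dots\times\tau^{(s)}$ with $\tau^{(j)}\in\Irr_{\rho_j}(\GL_{d_jm_j}(F))$, $m_j\ge1$, for pairwise non-isomorphic irreducible unitary supercuspidal representations $\rho_1,\dots,\rho_s$; we then say that $\rho_j$ \emph{occurs in} $\tau$. For a finite set $\mathcal{S}$ of (isomorphism classes of) irreducible unitary supercuspidal representations, call $\tau$ \emph{$\mathcal{S}$-isotypic} if every $\rho$ occurring in $\tau$ belongs to $\mathcal{S}$, and, writing $\mu^*(\pi)=\sum_{i\in I}\tau_i\otimes\pi_i$ with all $\tau_i$, $\pi_i$ irreducible, set
\[
\mu^*_{\mathcal{S}}(\pi)=\sum_{\substack{i\in I\\ \tau_i\ \mathcal{S}\text{-isotypic}}}\tau_i\otimes\pi_i.
\]
Then $\mu^*_{\emptyset}=\iota$, $\mu^*_{\{\rho\}}=\mu^*_\rho$, and $\mu^*_{\mathcal{S}}(\pi)=\mu^*(\pi)$ as soon as $\mathcal{S}$ contains every supercuspidal line occurring in the cuspidal support of $\pi$, since the cuspidal support of each $\tau_i$ is part of that of $\pi$.

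The key step will be the identity
\[
(m\otimes\id)\circ(\id\otimes\mu^*_\rho)\circ\mu^*_{\mathcal{S}}=\mu^*_{\mathcal{S}\cup\{\rho\}}\qquad(\rho\notin\mathcal{S}).
\]
To prove it I would start from the coassociativity $(m^*\otimes\id)\circ\mu^*=(\id\otimes\mu^*)\circ\mu^*$, which is just transitivity of the Jacquet functors. In the expansion $(\id\otimes\mu^*)\circ\mu^*(\pi)=\sum_i\tau_i\otimes\mu^*(\pi_i)$, the left-hand side above selects precisely those summands $\tau_i\otimes\sigma\otimes\pi_0$ (with $\sigma\otimes\pi_0$ running over $\mu^*(\pi_i)$) for which $\tau_i$ is $\mathcal{S}$-isotypic and $\sigma$ is $\{\rho\}$-isotypic, and then multiplies the first two tensor slots. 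On the other hand, in $(m^*\otimes\id)\circ\mu^*(\pi)=\sum_i m^*(\tau_i)\otimes\pi_i$ one uses that $m^*$ is a ring homomorphism, hence multiplicative for the factorization $\tau_i=\times_\rho\tau_i^{(\rho)}$, and that the only summand of $m^*(\xi)$ with trivial left (resp.\ right) tensor slot is $\1\otimes\xi$ (resp.\ $\xi\otimes\1$). It follows that demanding the left slot of $m^*(\tau_i)$ to be $\mathcal{S}$-isotypic and the right slot to be $\{\rho\}$-isotypic forces $\tau_i$ to be $(\mathcal{S}\cup\{\rho\})$-isotypic, the left slot to be the $\mathcal{S}$-part $\times_{\rho'\in\mathcal{S}}\tau_i^{(\rho')}$ of $\tau_i$, and the right slot to be its $\rho$-part $\tau_i^{(\rho)}$; multiplying these recovers $\tau_i$. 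Comparing the two sides of coassociativity after these selections and then applying $m$ yields exactly $\mu^*_{\mathcal{S}\cup\{\rho\}}(\pi)$.

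Granting the identity, the lemma follows by iteration. Fix $\pi\in\Irr(G(F))$ and let $L(\pi)$ be the finite set of supercuspidal lines occurring in its cuspidal support. If $\rho\notin L(\pi)$ then $\mu^*_\rho(\pi_0)=\1\otimes\pi_0$ for every $\pi_0$ whose cuspidal support is part of that of $\pi$, so $(m\otimes\id)\circ(\id\otimes\mu^*_\rho)$ fixes $\mu^*_{\mathcal{S}}(\pi)$ for every $\mathcal{S}$; hence in $\circ_\rho\bigl((m\otimes\id)\circ(\id\otimes\mu^*_\rho)\bigr)$ only the finitely many factors with $\rho\in L(\pi)$ act nontrivially on $\iota(\pi)$. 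Starting from $\mu^*_{\emptyset}=\iota$ and applying the identity once for each element of $L(\pi)$, in any order, successively produces $\mu^*_{\mathcal{S}}(\pi)$ for ever larger $\mathcal{S}$, and ends with $\mu^*_{L(\pi)}(\pi)=\mu^*(\pi)$; in particular the composition is well defined, independent of the chosen ordering, and equals $\mu^*(\pi)$.

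The step I expect to be the main obstacle is the bookkeeping in the key identity: one has to keep straight which tensor slot is required to be $\mathcal{S}$-isotypic and which to be $\{\rho\}$-isotypic, and to check that on each supercuspidal line outside $\mathcal{S}\cup\{\rho\}$ the coproduct $m^*$ is forced to take its trivial value $\1\otimes\xi$ or $\xi\otimes\1$, so that everything collapses as stated. The remaining ingredients — unique factorization of representations of $\GL_k(F)$, multiplicativity of $m^*$, coassociativity of $\mu^*$, and finiteness of $L(\pi)$ — are standard.
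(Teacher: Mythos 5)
Your proof is correct and takes essentially the same approach as the paper: both arguments rest on the observation that the iterated composition $(m\otimes\id)\circ(\id\otimes\mu^*_\rho)$ successively picks out the part of $\mu^*(\pi)$ whose $\GL$-factor is supported on the prescribed supercuspidal lines. The paper asserts this selection property briefly (only spelling out the two-line commutation and then saying ``by the same argument''), while you make it fully precise by introducing $\mu^*_{\mathcal{S}}$, proving the key identity $(m\otimes\id)\circ(\id\otimes\mu^*_\rho)\circ\mu^*_{\mathcal{S}}=\mu^*_{\mathcal{S}\cup\{\rho\}}$ from coassociativity of $\mu^*$ and multiplicativity of $m^*$, and iterating; this is a more rigorous rendering of the same idea rather than a different route.
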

\begin{proof}
Fix an irreducible representation $\pi$ of $G(F)$.
First, we note that there are only finitely many $\rho$ such that $\mu^*_\rho(\pi) \not= 0$.
Second, we claim that
\[
(m \otimes \id) \circ (\id \otimes \mu^*_{\rho'}) \circ \mu^*_{\rho}(\pi)
= 
(m \otimes \id) \circ (\id \otimes \mu^*_{\rho}) \circ \mu^*_{\rho'}(\pi)
\]
for distinct $\rho$ and $\rho'$.
In fact, this is the sum of subrepresentations appearing $\mu^*(\pi)$ of the form
\[
(\tau \times \tau') \otimes \pi_0 = (\tau' \times \tau) \otimes \pi_0, 
\]
where $\tau$ is of type $\rho$ and $\tau'$ is of type $\rho'$.
By the same argument, we have 
\[
\mu^* = \circ_{\rho} \left( (m \otimes \id) \circ (\id \otimes \mu^*_\rho) \right) \circ \iota(\pi),
\]
as desired.
\end{proof}
%Therefore, to describe $\mu^*(\pi)$, 
%it is enough to consider $\mu^*_\rho(\pi)$.
%By Lemma \ref{GL}, 
%the subgroup of $\RR_m$ generated by $\{\tau_{\ub{x}}\ |\ \ub{x} \in \Omega_m\}$
%has another basis $\{\Delta_{\ub{x}}\ |\ \ub{x} \in \Omega_m\}$.
%Hence one can write 
%\begin{align*}
%\mu^*_\rho(\pi) &= \sum_{m \geq 0}\sum_{\ub{x} \in \Omega_m} \tau_{\ub{x}} \otimes \pi_{\ub{x}}
%\\&
%= \sum_{m \geq 0}\sum_{\ub{x} \in \Omega_m} \Delta_{\ub{x}} \otimes \Pi_{\ub{x}}
%\end{align*}
%for some representation $\pi_{\ub{x}}, \Pi_{\ub{x}} \in \RR(G_0)$.
%In fact, all but finitely many terms are zero. 
%By Lemma \ref{dim}, we have
%\[
%\Jac_{\rho|\cdot|^{\ub{x}}}(\pi) = \dim \Jac_{\rho|\cdot|^{\ub{x}}}(\tau_{\ub{x}}) \cdot \Pi_{\ub{x}} 
%\]
%so that 
%\[
%\mu^*_\rho(\pi) = \sum_{m \geq 0} \sum_{\ub{x} \in \Omega_m}
%\left( \prod_{(x,y) \in \{(x_i,y_i)\}_i} \frac{1}{m_{(x,y)}!} \right)
%\tau_{\ub{x}} \otimes \Jac_{\rho|\cdot|^{\ub{x}}}(\pi)
%\]
%for any irreducible representation $\pi$ of $G(F)$.
%\par

Tadi{\'c} established a formula to compute $\mu^*$ for induced representations.
The contragredient functor $\tau \mapsto \tau^\vee$ defines an automorphism 
$\vee \colon \RR \rightarrow \RR$ in a natural way.
Let $s \colon \RR \otimes \RR \rightarrow \RR \otimes \RR$ be the homomorphism defined by
$\sum_i\tau_i \otimes \tau'_i \mapsto \sum_i \tau'_i \otimes \tau_i$.
\par

\begin{thm}[Tadi{\'c} \cite{T}]\label{tdc}
Consider the composition
\[
M^* = (m \otimes \id) \circ (\vee \otimes m^*) \circ s \circ m^* \colon \RR \rightarrow \RR \otimes \RR.
\]
Then for the maximal parabolic subgroup $P_k=M_kN_k$ of $G(F)$
and for an admissible representation $\tau \boxtimes \pi$ of $M_k$, 
we have
\[
\mu^*(\tau \rtimes \pi) = M^*(\pi) \rtimes \mu^*(\pi).
\]
\end{thm}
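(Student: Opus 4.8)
The plan is to deduce the formula from the Bernstein--Zelevinsky geometric lemma, which describes the semisimplification of the Jacquet module of a parabolically induced representation as a sum over double cosets of Weyl groups; this is the method of \cite{T}. Since both $\RR$ and $\RR(G)$ are Grothendieck groups and $\Ind_P^{G(F)}$, $\Jac_P$ are exact, one may work with semisimplifications throughout. Moreover, as $\mu^*$ is assembled from the Jacquet functors $\Jac_{P_j}$ attached to the maximal parabolic subgroups $P_j = M_j N_j$ of $G(F)$, it suffices to compute each $\semi\Jac_{P_j}(\tau \rtimes \pi)$ and then sum over $j$.

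First I would apply the geometric lemma to $\tau \rtimes \pi = \Ind_{P_k}^{G(F)}(\tau \boxtimes \pi)$: the Jacquet module $\Jac_{P_j}$ of this induced representation admits a filtration whose subquotients are indexed by a chosen set of representatives $w$ for the double cosets $W_{M_j}\backslash W_G / W_{M_k}$, each subquotient arising from $\tau \boxtimes \pi$ by first taking a Jacquet module along the parabolic of $M_k \cong \GL_k(F)\times G_0(F)$ cut out by $w$, then twisting by $w$, then inducing back up to $M_j$. The essential combinatorial input is that the Weyl group of $\SO_{2n+1}$ or $\Sp_{2n}$ is the group of signed permutations, so the representatives $w$ can be arranged so that the $\GL_k$-block of $M_k$ is split into three sub-blocks according to their image under $w$: one stays inside the $\GL$-factor of $M_j$ without crossing the classical block, one is absorbed into the $G_0$-factor of $M_j$ (where it reappears as a parabolic induction), and the last is carried ``around'' the classical block by the sign change of the signed permutation, which on representations is realized by the contragredient functor. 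At the same time, the classical factor $\pi$ contributes its own Jacquet modules along the maximal parabolics of $G_0$.

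Next I would match these contributions against the right-hand side. Unwinding $M^* = (m \otimes \id) \circ (\vee \otimes m^*) \circ s \circ m^*$, one sees that $M^*(\tau)$ is obtained by applying $m^*$ to write $\tau$ as $\sum \tau_{(1)} \otimes \tau_{(2)}$, swapping, applying $m^*$ again to $\tau_{(1)}$ to get $\sum \tau_{(1,1)} \otimes \tau_{(1,2)}$, replacing $\tau_{(2)}$ by $\tau_{(2)}^\vee$, and multiplying it back with $\tau_{(1,1)}$, so that $M^*(\tau) = \sum (\tau_{(2)}^\vee \times \tau_{(1,1)}) \otimes \tau_{(1,2)}$; hence, writing $\mu^*(\pi) = \sum \sigma \otimes \pi'$, one gets $M^*(\tau) \rtimes \mu^*(\pi) = \sum (\tau_{(2)}^\vee \times \tau_{(1,1)} \times \sigma) \otimes (\tau_{(1,2)} \rtimes \pi')$. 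The three sub-blocks of the double-coset bookkeeping correspond precisely to $\tau_{(1,1)}$ (stays in the $\GL$-factor of $M_j$), $\tau_{(1,2)}$ (absorbed into the classical factor), and $\tau_{(2)}$ (reflected, hence the contragredient), while the splitting of the Jacquet module of $\pi$ supplies $\sigma \otimes \pi'$; the associativity and commutativity of $m$ together with the Hopf-algebra compatibility of $\RR$ recorded above let one reassemble these into exactly the displayed summand. Summing over $j$ then yields $\mu^*(\tau \rtimes \pi) = M^*(\tau) \rtimes \mu^*(\pi)$.

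The main obstacle is the precise normalization in the reflected sub-block: one must check that the modulus characters entering the normalized induction and Jacquet functors, combined with the action of the signed permutation $w$ on the maximal torus, conspire so that this sub-block contributes the contragredient $\tau_{(2)}^\vee$ on the nose, with no leftover unramified twist. This reduces to a careful computation of half-sums of roots in types $B_n$ and $C_n$ and is where the bulk of the work lies; once it is pinned down, the remainder is formal manipulation in the ring $\RR$ and the $\RR$-comodule $\RR(G)$.
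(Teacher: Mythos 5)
The paper does not prove this theorem: it is stated with attribution to Tadi\'c \cite{T} and used thereafter as a cited black box, so there is no in-paper proof to compare against. Your sketch faithfully reproduces the strategy of the cited reference: compute $\semi\Jac_{P_j}(\tau\rtimes\pi)$ via the Bernstein--Zelevinsky geometric lemma, index the filtration by representatives of $W_{M_j}\backslash W_G / W_{M_k}$, use that the Weyl group in type $B_n$/$C_n$ is the hyperoctahedral group of signed permutations to split the $\GL_k$-block into the three sub-blocks you describe (staying in the $\GL$-factor, absorbed into the classical factor, reflected past it), verify the half-sum-of-roots normalization so that the reflected block contributes exactly the contragredient with no stray unramified twist, and reassemble using the Hopf structure on $\RR$; your Sweedler-style unwinding of $M^*$ is correct and matches that bookkeeping. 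One remark worth flagging: the displayed conclusion in the paper reads $\mu^*(\tau \rtimes \pi) = M^*(\pi) \rtimes \mu^*(\pi)$, which is a typo for $\mu^*(\tau \rtimes \pi) = M^*(\tau) \rtimes \mu^*(\pi)$ (as in Tadi\'c's original and as used throughout the rest of the paper, e.g.\ in Corollary \ref{cor.tdc}); your proposal silently proves the corrected statement, which is the right one.
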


In particular, we have the following.
\begin{cor}\label{cor.tdc}
For a segment $[x,y]$, we have
\begin{align*}
&\mu^*_\rho( \pair{\rho; x, \dots, y} \rtimes \pi)
= \sum_{ \substack{k,l \geq 0 \\ k+l \leq x-y+1}} 
\\&
\left((\pair{\rho; \underbrace{-y, \dots, -y-k+1}_{k}} \times \pair{\rho; \underbrace{x,\dots,x-l+1}_{l}}) 
\otimes \pair{\rho; x-l, \dots, y+k} \right)
\rtimes \mu^*_\rho(\pi).
\end{align*}
\end{cor}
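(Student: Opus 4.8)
The plan is to deduce Corollary~\ref{cor.tdc} directly from Tadi\'c's formula (Theorem~\ref{tdc}) by computing the map $M^*$ on the Steinberg-type representation $\pair{\rho; x, \dots, y}$ and then discarding all terms not of type $\rho$. First I would recall that $M^* = (m \otimes \id) \circ (\vee \otimes m^*) \circ s \circ m^*$, so the computation factors through the Hopf-algebra coproduct $m^*$ applied twice, together with the contragredient and the multiplication $m$. Since $\pair{\rho; x, \dots, y} = |\cdot|^{(x+y)/2}\St(\rho, x-y+1)$, Zelevinsky's formula (the Proposition attributed to \cite[Proposition 9.5]{Z}) gives
\[
m^*(\pair{\rho; x, \dots, y}) = \sum_{j=0}^{x-y+1} \pair{\rho; x, \dots, x-j+1} \otimes \pair{\rho; x-j, \dots, y},
\]
where the $j=0$ and $j=x-y+1$ terms involve $\1_{\GL_0}$. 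This is the only nontrivial input; everything else is bookkeeping.

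Next I would apply $s \circ m^*$ to land in $\RR \otimes \RR$ with the two tensor factors swapped, then apply $\vee \otimes m^*$: the contragredient sends $\pair{\rho; x, \dots, x-j+1}$ to $\pair{\rho^\vee; -(x-j+1), \dots, -x}$; using that $\rho$ is self-dual up to the relevant identification (and tracking signs carefully, this is where $-y$ rather than $y$ enters), one rewrites this as $\pair{\rho; x, \dots, x-j+1}^\vee$ in the normalization $\pair{\rho; -y-k+1, \dots, \ldots}$-style segments. Then $m^*$ is applied to the second factor $\pair{\rho; x-j, \dots, y}$, again by Zelevinsky's Proposition, splitting it as $\pair{\rho; x-j, \dots, x-j-i+1} \otimes \pair{\rho; x-j-i, \dots, y}$. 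Finally $(m\otimes\id)$ multiplies the first two factors together, yielding an induced product $\pair{\rho; \dots} \times \pair{\rho; \dots}$ tensored with the remaining segment. Re-indexing with $k$ and $l$ as in the statement — $k$ counting how far the bottom endpoint $y$ moves up and $l$ counting how far the top endpoint $x$ moves down — and collecting the constraint $k+l \le x-y+1$ forced by the two successive applications of the splitting formula, gives exactly the claimed sum $\sum_{k,l\ge 0,\ k+l\le x-y+1}$. Then $\mu^*(\pair{\rho; x,\dots,y} \rtimes \pi) = M^*(\pair{\rho;x,\dots,y}) \rtimes \mu^*(\pi)$ by Theorem~\ref{tdc}, and projecting both sides onto the type-$\rho$ part (which is compatible with $\rtimes$, since inducing with a type-$\rho$ representation does not change the type on the $\GL$ side by the irreducibility facts recorded before Lemma~\ref{GL}) yields the stated identity for $\mu^*_\rho$.

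The main obstacle I anticipate is purely notational: getting the endpoints of the segments exactly right after applying the contragredient and the swap $s$, in particular verifying that the first factor becomes $\pair{\rho; -y, \dots, -y-k+1}$ (with the sign flip coming from $\vee$) while the second stays $\pair{\rho; x, \dots, x-l+1}$, and checking that no terms of the induced product collapse or need reordering. One must also confirm that truncating to $\mu^*_\rho$ really does kill all cross-type contributions coming from $\mu^*(\pi)$ — but since the $\GL$-parts produced by $M^*(\pair{\rho;x,\dots,y})$ are manifestly of type $\rho$ and $m$ preserves type, and since $\mu^*_\rho$ is defined by projecting onto type-$\rho$ $\GL$-factors, this is immediate from how $\rtimes$ interacts with the type decomposition. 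I would therefore present the proof as: (i) compute $m^*$ on $\pair{\rho;x,\dots,y}$; (ii) assemble $M^*$ step by step, re-indexing; (iii) invoke Theorem~\ref{tdc} and project to type $\rho$.
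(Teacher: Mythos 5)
Your strategy---unwind $M^* = (m\otimes\id)\circ(\vee\otimes m^*)\circ s\circ m^*$ via two applications of Zelevinsky's coproduct formula for segments, re-index, and project to type $\rho$ using Theorem~\ref{tdc}---is exactly right, and it is the only route, which the paper leaves implicit. However, your execution of the middle step contains a concrete error. After applying $s$, the coproduct $m^*(\pair{\rho;x,\dots,y}) = \sum_{j}\pair{\rho;x,\dots,x-j+1}\otimes\pair{\rho;x-j,\dots,y}$ becomes $\sum_{j}\pair{\rho;x-j,\dots,y}\otimes\pair{\rho;x,\dots,x-j+1}$; hence $\vee$ acts on the \emph{bottom} piece $\pair{\rho;x-j,\dots,y}$, giving $\pair{\rho^\vee;-y,\dots,-(x-j)}$---this is the genuine source of the $-y$ in the Corollary's first factor---while $m^*$ acts on the \emph{top} piece $\pair{\rho;x,\dots,x-j+1}$. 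You describe the opposite assignment (dualizing $\pair{\rho;x,\dots,x-j+1}$ and splitting $\pair{\rho;x-j,\dots,y}$), which is exactly what one would compute by \emph{omitting} $s$, despite your claim to apply it. If carried through, your version produces a first factor with top endpoint $-(x-j+1)$ rather than $-y$, and a third factor whose lower endpoint stays $y$ for all indices rather than becoming $y+k$, so the re-indexing $k = x-y+1-j$, $l=i$ would not reproduce the stated formula. With the correct assignment one has $\pair{\rho^\vee;-y,\dots,-(x-j)} = \pair{\rho^\vee;-y,\dots,-y-k+1}$ (length $k$), second factor $\pair{\rho;x,\dots,x-l+1}$, third factor $\pair{\rho;x-l,\dots,y+k}$, and the constraint $0\le i\le j$ translates to $k+l\le x-y+1$ as claimed.

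A secondary point you only allude to: the Corollary writes $\rho$, not $\rho^\vee$, in the first factor of the product, which is literal only when $\rho^\vee\cong\rho$. If $\rho$ is not self-dual, every term with $k>0$ has a $\GL$-factor of mixed cuspidal type and is killed by $\mu^*_\rho$, so the sum collapses to the $k=0$ part. In the paper's applications $\rho$ always occurs inside a self-dual parameter, so this is harmless, but it deserves to be said plainly rather than via ``self-dual up to the relevant identification.''
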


%\section{Local Langlands correspondence}
%\section{Local Langlands correspondence}
\section{Local Langlands correspondence}\label{s.LLC}
In this section, we review the local Langlands correspondence
for split $\SO_{2n+1}$ or $\Sp_{2n}$ over $F$.
\par

%\subsection{$L$-parameters}
\subsection{$L$-parameters}\label{Lpara}
A homomorphism 
\[
\phi \colon W_F \times \SL_2(\C) \rightarrow \GL_k(\C)
\]
is called a \textbf{representation} of the Weil--Deligne group $W_F \times \SL_2(\C)$
if 
\begin{itemize}
\item
$\phi(\Frob) \in \GL_k(\C)$ is semisimple; 
\item
$\phi|W_F$ is smooth, i.e., has an open kernel; 
\item
$\phi|\SL_2(\C)$ is algebraic.
\end{itemize}
We say that a representation $\phi$ of $W_F \times \SL_2(\C)$ is \textbf{tempered}
if $\phi(W_F)$ is bounded.
The local Langlands correspondence for $\GL_k$ asserts that 
there is a canonical bijection 
\[
\Irr(\GL_k(F)) \longleftrightarrow \{\phi \colon W_F \times \SL_2(\C) \rightarrow \GL_k(\C)\}/\cong,
\]
which preserves the tempered-ness.
\par

An \textbf{$L$-parameter} for $\SO_{2n+1}$ is a symplectic representation
\[
\phi \colon W_F \times \SL_2(\C) \rightarrow \Sp_{2n}(\C).
\]
Similarly, an \textbf{$L$-parameter} for $\Sp_{2n}$ is an orthogonal representation
\[
\phi \colon W_F \times \SL_2(\C) \rightarrow \SO_{2n+1}(\C).
\]
For $G = \SO_{2n+1}$ or $G = \Sp_{2n}$, 
we let $\Phi(G)$ be the set of equivalence classes of $L$-parameters for $G$.
We say that 
\begin{itemize}
\item
$\phi \in \Phi(G)$ is \textbf{discrete} 
if $\phi$ is a multiplicity-free sum of irreducible self-dual representations of the same type as $\phi$; 
\item
$\phi \in \Phi(G)$ is \textbf{of good parity} 
if $\phi$ is a sum of irreducible self-dual representations of the same type as $\phi$; 
\item
$\phi \in \Phi(G)$ is \textbf{tempered} if $\phi(W_F)$ is bounded; 
\item
$\phi \in \Phi(G)$ is \textbf{generic} 
if the adjoint $L$-function $L(s,\phi, \Ad)$ is regular at $s=1$.
\end{itemize}
We denote by $\Phi_\disc(G)$ (\resp $\Phi_\gp(G)$, $\Phi_\temp(G)$, and $\Phi_\gen(G)$) 
the subset of $\Phi(G)$ consisting of discrete $L$-parameters 
(\resp $L$-parameters of good parity, tempered $L$-parameters, and generic $L$-parameters).
Then we have inclusions 
\[
\Phi_\disc(G) \subset \Phi_\gp(G) \subset \Phi_\temp(G) \subset \Phi_\gen(G) \subset \Phi(G).
\]
\par

For $\phi \in \Phi(G)$, we can decompose
\[
\phi = m_1\phi_1 \oplus \dots \oplus m_r\phi_r \oplus (\phi' \oplus \phi'^\vee), 
\]
where $\phi_1, \dots, \phi_r$ are distinct irreducible self-dual representations of the same type as $\phi$, 
$m_i \geq 1$ is the multiplicity of $\phi_i$ in $\phi$, 
and $\phi'$ is a sum of irreducible representations 
which are not self-dual or self-dual of the opposite type to $\phi$.
We define the \textbf{component group} $A_\phi$ of $\phi$ by 
\[
A_\phi = \bigoplus_{i=1}^r (\Z/2\Z) \alpha_{\phi_i} \cong (\Z/2\Z)^r.
\] 
Namely, $A_\phi$ is a free $\Z/2\Z$-module of rank $r$ 
and $\{\alpha_{\phi_1}, \dots, \alpha_{\phi_r}\}$ is a basis of $A_\phi$
with $\alpha_{\phi_i}$ associated to $\phi_i$.
We set
\[
z_\phi = \sum_{i=1}^r m_i \alpha_{\phi_i} \in A_\phi
\]
and we call $z_\phi$ the \textbf{central element} in $A_\phi$.
\par

We shall introduce an \textbf{enhanced component group} $\AA_\phi$ associated to $\phi \in \Phi(G)$.
Write $\phi = \phi_\gp \oplus (\phi' \oplus \phi'^\vee)$, 
where $\phi_\gp$ is the sum of irreducible self-dual representations of the same type as $\phi$, 
and $\phi'$ is a sum of irreducible representations which are not of the same type as $\phi$.
We decompose 
\[
\phi_\gp = \bigoplus_{i \in I} \phi_i
\]
into the sum of irreducible representations.
Then we define the \textbf{enhanced component group} $\AA_\phi$ associated to $\phi$ by 
\[
\AA_\phi = \bigoplus_{i \in I} (\Z/2\Z) \alpha_i.
\]
Namely, $\AA_\phi$ is a free $\Z/2\Z$-module 
whose rank is equal to the length of $\phi_\gp$.
By abuse of notation, we put $z_\phi = \sum_{i \in I} \alpha_i \in \AA_\phi$ and call it the \textbf{central element} in $\AA_\phi$.
There is a canonical surjection
\[
\AA_\phi \twoheadrightarrow A_\phi,\ \alpha_i \mapsto \alpha_{\phi_i}.
\]
The kernel of this map is generated by $\alpha_i + \alpha_j$ for $\phi_i \cong \phi_j$.
Moreover, this map preserves the central elements.

%\subsection{Local Langlands correspondence}
\subsection{Local Langlands correspondence}
We denote by $\Irr_\disc(G(F))$ (\resp $\Irr_\temp(G(F))$)
the set of equivalence classes of irreducible discrete series (\resp tempered) representations of $G(F)$.
The local Langlands correspondence established by Arthur is as follows:

\begin{thm}[{\cite[Theorem 2.2.1]{Ar}}]\label{LLC}
Let $G$ be a split $\SO_{2n+1}$ or $\Sp_{2n}$.
\begin{enumerate}
\item
There exists a canonical surjection 
\[
\Irr(G(F)) \rightarrow \Phi(G).
\]
For $\phi \in \Phi(G)$, we denote by $\Pi_\phi$
the inverse image of $\phi$ under this map,
and call $\Pi_\phi$ the \textbf{$L$-packet associated to $\phi$}.
\item
There exists an injection 
\[
\Pi_\phi \rightarrow \widehat{A_\phi},
\]
which satisfies certain endoscopic character identities. 
Here, $\widehat{A_\phi}$ is the Pontryagin dual of $A_\phi$.
The image of this map is 
\[
\{\eta \in \widehat{A_\phi}\ |\ \eta(z_\phi) = 1\}.
\]
When $\pi \in \Pi_\phi$ corresponds to $\eta \in \widehat{A_\phi}$, 
we write $\pi = \pi(\phi, \eta)$.

\item
For $* \in \{\disc, \temp\}$, 
\[
\Irr_*(G(F)) = \bigsqcup_{\phi \in \Phi_*(G)}\Pi_\phi.
\]

\item
Assume that $\phi=\phi_\tau \oplus \phi_0 \oplus \phi_\tau^\vee \in \Phi_\temp(G)$, where 
\begin{itemize}
\item
$\phi_0 \in \Phi_\temp(G_0)$ with a classical group $G_0$ of the same type as $G$;
\item
$\phi_\tau$ is a tempered representation of $W_F \times \SL_2(\C)$ of dimension $k$.
\end{itemize}
Let $\tau$ be the irreducible tempered representation of $\GL_k(F)$ corresponding to $\phi_\tau$.
Then for $\pi_0 \in \Pi_{\phi_0}$, 
the induced representation $\tau \rtimes \pi_0$
decomposes into a direct sum of irreducible tempered representations of $G(F)$.
The $L$-packet $\Pi_\phi$ is given by
\[
\Pi_\phi = \left\{\pi \ \middle|\ \pi \subset \tau \rtimes \pi_0,\ \pi_0 \in \Pi_{\phi_0} \right\}.
\]
Moreover there is a canonical inclusion $A_{\phi_0} \hookrightarrow A_\phi$.
If $\pi(\phi, \eta)$ is a direct summand of $\tau \rtimes \pi_0$ with $\pi_0 = \pi(\phi_0, \eta_0)$, 
then $\eta_0 = \eta|A_{\phi_0}$.

\item
Assume that 
\[
\phi = \phi_1|\cdot|^{s_1} \oplus \dots \oplus \phi_r|\cdot|^{s_r} \oplus 
\phi_0 \oplus \phi_r^\vee|\cdot|^{-s_r} \oplus \dots \oplus \phi_1^\vee|\cdot|^{-s_1},
\]
where 
\begin{itemize}
\item
$\phi_0 \in \Phi_\temp(G_0)$ with a classical group $G_0$ of the same type as $G$;
\item
$\phi_{i}$ is a tempered representation of $W_F \times \SL_2(\C)$ of dimension $k_i$ for $1 \leq i \leq r$; 
\item
$s_i$ is a real number such that $s_1 \geq \dots \geq s_r>0$.
\end{itemize}
Let $\tau_i$ be the irreducible tempered representation of $\GL_{k_i}(F)$ corresponding to $\phi_{i}$.
Then the $L$-packet $\Pi_\phi$ consists of the unique irreducible quotients $\pi$ of 
the standard modules
\[
\tau_1|\cdot|^{s_1} \times \dots \times \tau_r|\cdot|^{s_r} \rtimes \pi_0, 
\]
where $\pi_0$ runs over $\Pi_{\phi_0}$.
Moreover there is a canonical inclusion $A_{\phi_0} \hookrightarrow A_\phi$, which is in fact bijective. 
If $\pi(\phi, \eta)$ is the unique irreducible quotient of the above standard module 
with $\pi_0 = \pi(\phi_0, \eta_0)$, 
then $\eta_0 = \eta|A_{\phi_0}$. 
In this case, we denote this standard module by $I(\phi, \eta)$.
\end{enumerate}
\end{thm}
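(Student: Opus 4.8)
The plan is to assemble the theorem from Arthur's endoscopic classification \cite{Ar} together with the Langlands classification and the elementary structure theory of parabolically induced representations; essentially nothing new needs to be proved. Parts (1), (2), (3) are precisely the content of \cite[Theorem 2.2.1]{Ar}: the canonical surjection $\Irr(G(F)) \rightarrow \Phi(G)$, the internal parametrization of $\Pi_\phi$ by the characters of $A_\phi$ trivial on $z_\phi$ (pinned down by the twisted endoscopic character identities against $\GL_N$), and the exhaustion of discrete series and tempered representations. For these I would simply recall Arthur's statement and fix the normalizations used in the sequel.

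For part (4) I would use Arthur's construction of tempered $L$-packets, in which a tempered $\phi = \phi_\tau \oplus \phi_0 \oplus \phi_\tau^\vee$ is glued from the packet $\Pi_{\phi_0}$ along the maximal parabolic $P_k$ with Levi $\GL_k(F) \times G_0(F)$. First I would record that $\tau \rtimes \pi_0$ is a direct sum of irreducible tempered representations: temperedness is automatic for induction from a tempered representation of a Levi, and semisimplicity follows from unitarity (a finite-length unitary representation is semisimple), with all multiplicities equal to one by the theory of $R$-groups for classical groups. The canonical map $A_{\phi_0} \hookrightarrow A_\phi$ is the one induced by $\phi_0 \hookrightarrow \phi$ on the sets of irreducible self-dual summands of the type of $\phi$; it is injective since each such summand of $\phi_0$ is still one of $\phi$, and it need not be surjective, because a self-dual summand of $\phi_\tau$ of the same type contributes a new generator. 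The identification of the constituents of $\tau \rtimes \pi_0$ with characters of $A_\phi$, and the relation $\eta_0 = \eta|A_{\phi_0}$, are then read off from the compatibility of the endoscopic character identities with parabolic induction, which is part of Arthur's construction.

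For part (5) I would invoke the Langlands classification for $G(F)$: every $\pi \in \Irr(G(F))$ is the unique irreducible quotient of a standard module $\tau_1|\cdot|^{s_1} \times \dots \times \tau_r|\cdot|^{s_r} \rtimes \pi_0$ with $\pi_0$ irreducible tempered and $s_1 \geq \dots \geq s_r > 0$, and its $L$-parameter is the displayed sum (compatibility of the correspondence with Langlands quotients, again built into \cite{Ar}). The key point is that each $\phi_i|\cdot|^{s_i}$ with $s_i > 0$ is not self-dual, hence contributes nothing to the component group; therefore $A_{\phi_0} \hookrightarrow A_\phi$ is an isomorphism carrying $z_{\phi_0}$ to $z_\phi$, and $\pi(\phi,\eta)$ is the Langlands quotient of the standard module built from $\pi_0 = \pi(\phi_0, \eta_0)$ with $\eta_0 = \eta|A_{\phi_0}$. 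I would then fix the notation $I(\phi,\eta)$ for this standard module.

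The hard part is not any single computation but the bookkeeping of normalizations: checking that the inclusion $A_{\phi_0} \hookrightarrow A_\phi$ used in parts (4) and (5) agrees with the one implicit in Arthur's endoscopic character identities, and that the conventions relating $\eta$, $\eta_0$ and the central elements $z_\phi$, $z_{\phi_0}$ are consistent throughout. Everything else reduces to a direct appeal to \cite{Ar} and to standard facts about induced representations.
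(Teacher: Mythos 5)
Your proposal is correct and takes the same approach the paper does: the theorem is attributed entirely to Arthur's \cite[Theorem 2.2.1]{Ar}, and the paper gives no independent proof, so recalling Arthur's statement for parts (1)--(3) and deducing (4), (5) from Arthur's construction together with semisimplicity of unitary induction, $R$-group multiplicity one, the Langlands classification, and the observation that $\phi_i|\cdot|^{s_i}$ with $s_i>0$ is never self-dual is exactly the intended reading.
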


The injection $\Pi_\phi \rightarrow \widehat{A_\phi}$ is not canonical when $G = \Sp_{2n}$.
To specify this, we implicitly fix an $F^{\times2}$-orbit of non-trivial additive characters of $F$ through this paper.
\par

We have the following irreducibility criterion for standard modules.
\begin{thm}[Generalized standard module conjecture]\label{smc}
For $\phi \in \Phi(G)$, 
the standard module $I(\phi, \1)$ attached to $\pi(\phi, \1)$ is irreducible 
if and only if $\phi$ is generic.
Moreover, if $\phi$ is generic, 
then all standard modules $I(\phi, \eta)$, 
where $\eta \in \widehat{A_\phi}$ with $\eta(z_\phi) = 1$, 
are irreducible.
\end{thm}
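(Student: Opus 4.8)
The plan is to reduce the statement to known cases in the literature and then bootstrap. First I would recall that the implication ``$\phi$ generic $\Rightarrow$ $I(\phi,\1)$ irreducible'' is precisely the standard module conjecture, proven in the generality of quasi-split classical groups by Heiermann--Opdam and, in the form needed here (including the component-group bookkeeping of Theorem \ref{LLC}), by M{\oe}glin--Waldspurger; so this direction I would simply cite. For the converse, suppose $\phi$ is not generic, i.e.\ $L(s,\phi,\Ad)$ has a pole at $s=1$. Writing $\phi$ in the form of Theorem \ref{LLC}(5), the adjoint $L$-function factors through the $L$-functions of the pieces $\phi_i|\cdot|^{s_i}$ paired against each other and against $\phi_0$, together with $L(s,\phi_0,\Ad)$; since $\phi_0$ is tempered it is generic, so $L(s,\phi_0,\Ad)$ is regular at $s=1$, and the pole must come from a Rankin--Selberg-type factor $L(s+s_i-s_j,\phi_i\times\phi_j^\vee)$ or $L(s+s_i,\phi_i\times\phi_0)$-type contribution. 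Analyzing when such factors have a pole at $s=1$ forces a linked-segment configuration among the data $\{\tau_i|\cdot|^{s_i}\}$ and $\pi_0$, and I would use the results on reducibility of induced representations (Zelevinsky's criterion Theorem \ref{zel} on the $\GL$ side, together with the $R$-group/Goldberg-type analysis and the explicit reducibility points for $\tau\rtimes\pi_0$ with $\tau$ a Steinberg twist) to conclude that the standard module $\tau_1|\cdot|^{s_1}\times\dots\times\tau_r|\cdot|^{s_r}\rtimes\pi_0$ is reducible, hence $I(\phi,\1)$ is not irreducible.

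For the ``moreover'' clause, assume $\phi$ is generic and let $\eta\in\widehat{A_\phi}$ with $\eta(z_\phi)=1$. The cleanest route is to run the same argument with $\pi_0=\pi(\phi_0,\eta_0)$ where $\eta_0=\eta|A_{\phi_0}$, observing that the reducibility of the standard module $\tau_1|\cdot|^{s_1}\times\dots\times\tau_r|\cdot|^{s_r}\rtimes\pi_0$ depends only on the inducing data through the segments attached to the $\tau_i$ and through the reducibility points of $\tau_i|\cdot|^{s_i}\rtimes\pi_0$, and that these reducibility points for tempered $\pi_0$ in a fixed $L$-packet $\Pi_{\phi_0}$ are governed by $\phi_0$ alone (this is part of the content of Arthur's endoscopic classification: the Plancherel measures, equivalently the normalized intertwining operators, are constant on $L$-packets). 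Since genericity of $\phi$ is a condition on $\phi$ and not on $\eta$, and since for $\eta=\1$ irreducibility holds by the first part, it holds for all such $\eta$. Alternatively one can cite M{\oe}glin--Waldspurger directly, which already establishes irreducibility of all $I(\phi,\eta)$ for generic $\phi$.

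The main obstacle is the converse direction's passage from ``$L(s,\phi,\Ad)$ has a pole'' to ``the standard module is reducible'': this requires matching the analytic condition (pole of an Artin $L$-factor) with the representation-theoretic condition (a reducibility point of a parabolically induced representation), which is exactly where Arthur's local intertwining relation and the precise computation of reducibility points of $\tau\rtimes\pi_0$ in terms of $\phi_0$ enter. I expect to handle this by reducing, via the Hopf-algebra formalism and Tadi{\'c}'s formula (Theorem \ref{tdc}) together with Lemma \ref{circ}, to the case of a single segment $\pair{\rho;x,\dots,y}|\cdot|^s\rtimes\pi_0$, where the reducibility is classical and the correspondence with poles of $L(2s,\rho\times\rho^\vee)$ and the relevant $\rho$-reducibility point of $\pi_0$ is explicit.
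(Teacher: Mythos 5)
Your overall plan is sound and, in broad strokes, parallel to the literature the paper leans on, but the paper's own proof is much terser: it is essentially a sequence of citations, and the one nontrivial point it singles out is precisely the one you gloss over. The paper proves the first assertion (the iff) by citing Casselman--Shahidi, Mui\'c, Heiermann--Mui\'c, and Heiermann--Opdam for the usual standard module conjecture, and proves the ``moreover'' clause by citing M{\oe}glin--Waldspurger \cite[Corollaire 2.14]{MW} and Heiermann \cite{H}. The paper then explicitly flags a definitional subtlety: the cited works phrase genericity in terms of the Langlands quotient having a Whittaker model (equivalently, regularity of tempered $L$-factors or Shahidi $L$-functions of the inducing data), whereas the theorem as stated here uses the condition ``$L(s,\phi,\Ad)$ regular at $s=1$''. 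The bridge between these two notions is the Gross--Prasad--Rallis conjecture, which the paper cites to Gan--Ichino \cite[Proposition B.1]{GI2}. Your proposal instead attacks the converse direction head-on, trying to derive reducibility from a pole of $L(s,\phi,\Ad)$ by decomposing the adjoint $L$-function into Rankin--Selberg pieces, matching poles to linked segments and to reducibility points of $\tau_i|\cdot|^{s_i}\rtimes\pi_0$, and then invoking Zelevinsky's criterion and the constancy of Plancherel measures on $L$-packets. That is a legitimate alternative route in spirit (it is roughly the content of Shahidi's work and of the cited papers), and your remark that Plancherel measures depend only on $\phi_0$ is exactly the right tool for the ``moreover'' clause. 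However, as written it is a program rather than a proof: the crucial step from ``a pole at $s=1$ in some Rankin--Selberg factor'' to ``a reducibility point of the corresponding induced representation'' is precisely the hard analytic-to-algebraic passage that the standard module conjecture papers establish, and you leave it as ``I would use the results on reducibility of induced representations.'' If you want to keep the hands-on argument, you would need to either supply that analysis or fall back to citing the standard module conjecture directly, in which case you should also address, as the paper does, the mismatch between the two definitions of generic $L$-parameter via Gan--Ichino.
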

\begin{proof}
The first assertion is the usual standard module conjecture proven in \cite{CS, Mu, HM, HO}.
The second assertion was proven by M{\oe}glin--Waldspurger \cite[Corollaire 2.14]{MW} 
for special orthogonal groups and symplectic groups.
Heiermann \cite{H} also proved the second assertion in a more general setting.
Note that their definition of generic $L$-parameters might look different from ours.
The equivalence of two definitions is called a conjecture of Gross--Prasad and Rallis, 
which was proven by Gan--Ichino \cite[Proposition B.1]{GI2}.
\end{proof}

However, even if $\phi$ is not generic, 
there might exist an irreducible standard module $I(\phi, \eta)$.
An example of such standard modules will be given by Corollary \ref{std} and Example \ref{246} below.
\par

%\subsection{Extension to enhanced component groups}
\subsection{Extension to enhanced component groups}
To describe Jacquet modules of $\pi(\phi, \eta)$ for $\phi \in \Phi_\gp(G)$, 
it is useful to extend $\pi(\phi, \eta)$ to the case where $\eta$ is a character of the enhanced component group $\AA_\phi$
as follows.
Recall that there exists a canonical surjection $\AA_\phi \twoheadrightarrow A_\phi$
so that we have an injection $\widehat{A_\phi} \hookrightarrow \widehat{\AA_\phi}$.
For $\eta \in \widehat{\AA_\phi}$, set 
\[
\pi(\phi, \eta) = \left\{
\begin{aligned}
&\pi(\phi, \eta) \iif \eta \in \widehat{A_\phi},\ \eta(z_\phi) = 1,\\
&0 \other.
\end{aligned}
\right.
\]
In particular, $\pi(\phi, \eta)$ is irreducible or zero for any $\eta \in \widehat{\AA_\phi}$.

%\subsection{M{\oe}glin's construction of tempered $L$-packets}
\subsection{M{\oe}glin's construction of tempered $L$-packets}\label{mtl}
The $L$-packets are used for local classification.
On the other hand, Arthur \cite[Theorem 2.2.1]{Ar} introduced the notion of \textbf{$A$-packets}
for global classification.
M{\oe}glin constructed the local $A$-packets in her consecutive works (e.g., \cite{Moe-06b, Moe-09}, etc.).
For a detailed why M{\oe}glin's local $A$-packets agree with Arthur's, 
one can see Xu's paper \cite{X2} in addition to the original papers of M{\oe}glin.
Since the \textbf{tempered $A$-packets} are the same notion as the tempered $L$-packets, 
M{\oe}glin's construction can be applied to the tempered $L$-packets.
\par

We explain M{\oe}glin's construction of $\Pi_\phi$ for $\phi \in \Phi_\gp(G)$.
Write 
\[
\phi = \left(\bigoplus_{i=1}^{t} \rho \boxtimes S_{a_i} \right) \oplus \phi_e
\]
with $a_1 \leq \dots \leq a_t$ and $\rho \boxtimes S_a \not\subset \phi_e$ for any $a>0$.
Take a new $L$-parameter
\[
\phi_\gg = \left(\bigoplus_{i=1}^{t} \rho \boxtimes S_{a'_i} \right) \oplus \phi_e
\]
for a bigger group $G'$ of the same type as $G$ 
such that
\begin{itemize}
\item
$a'_1 < \dots < a'_t$; 
\item
$a_i' \geq a_i$ and $a'_i \equiv a_i \bmod 2$ for any $i$; 
\end{itemize}
Then we can identify $\AA_{\phi_\gg}$ with $\AA_{\phi}$ canonically, i.e., 
$\AA_{\phi_\gg} = \AA_{\phi}$ and
if $\AA_{\phi_{\gg}} \ni \alpha \mapsto \alpha_{\rho \boxtimes S_{a_i'}} \in A_{\phi_\gg}$, 
then $\AA_{\phi} \ni \alpha \mapsto \alpha_{\rho \boxtimes S_{a_i}} \in A_\phi$, 
Let $\eta_\gg \in \widehat{\AA_{\phi_\gg}}$ be the character corresponding to $\eta \in \widehat{\AA_{\phi}}$, 
i.e., $\eta_\gg = \eta$ via $\AA_{\phi_\gg} = \AA_\phi$.

\begin{thm}[M{\oe}glin]\label{moe}
With the notation above, we have
\[
\pi(\phi, \eta) = 
\Jac_{\rho|\cdot|^{\half{a'_t-1}}, \dots, \rho|\cdot|^\half{a_t+1}}
\circ \dots \circ
\Jac_{\rho|\cdot|^{\half{a'_1-1}}, \dots, \rho|\cdot|^\half{a_1+1}}
(\pi(\phi_\gg, \eta_\gg)).
\]
\end{thm}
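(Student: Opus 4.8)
The plan is to reduce the statement to the case $t=1$ by an inductive argument, and then to prove the single-step case by combining M{\oe}glin's explicit construction of $A$-packets with Tadi{\'c}'s formula. First I would set up the induction on $\sum_i (a'_i - a_i)$. If all $a'_i = a_i$ there is nothing to prove. Otherwise, pick the largest index $i_0$ for which $a'_{i_0} > a_{i_0}$; replacing $a'_{i_0}$ by $a'_{i_0} - 2$ gives an intermediate parameter $\phi^\flat$ which is still an admissible ``$\gg$-expansion'' of $\phi$ (the strict inequalities $a'_1 < \dots < a'_t$ are preserved because $i_0$ was chosen maximal, and the congruence and size conditions are clearly preserved). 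By the induction hypothesis applied to the pair $(\phi, \phi^\flat)$ and separately to $(\phi^\flat, \phi_\gg)$, it suffices to prove
\[
\pi(\phi^\flat, \eta^\flat) = \Jac_{\rho|\cdot|^{\half{a'_{i_0}-1}}, \dots, \rho|\cdot|^{\half{a_{i_0}+1}}}\bigl(\pi(\phi_\gg, \eta_\gg)\bigr),
\]
where the Jacquet operators range only over the single block of exponents $\half{a'_{i_0}-1} > \dots > \half{a_{i_0}+1}$ that differentiates $\phi^\flat$ from $\phi_\gg$. Actually, by the same reasoning one may reduce further: the intermediate parameters differ in a single $a'_{i_0}$ by $2$, so in fact it is enough to treat the case where one summand $\rho \boxtimes S_{a}$ is replaced by $\rho \boxtimes S_{a+2}$ and a single $\Jac_{\rho|\cdot|^{\half{a+1}}}$ is applied.

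Next I would prove this atomic step. Here the key input is the compatibility of $\pi(\phi_\gg,\eta_\gg)$ with parabolic induction. Write $\phi_\gg = (\rho\boxtimes S_{a+2}) \oplus \psi$ where $\psi$ is the remaining part; the point of the strict-inequality hypothesis is exactly that $\rho\boxtimes S_{a+2}$ occurs with multiplicity one and with a ``top'' position, so the tempered $L$-packet element $\pi(\phi_\gg,\eta_\gg)$ embeds into $\St(\rho,a+2)\rtimes \pi(\psi', \eta')$ for an appropriate smaller tempered parameter, with the component-group bookkeeping of Theorem \ref{LLC}(4). One then computes $\Jac_{\rho|\cdot|^{\half{a+1}}}$ of this induced representation using Corollary \ref{cor.tdc} (the specialization of Tadi{\'c}'s formula to a Steinberg block): the $M^*$-expansion of $\St(\rho,a+2)=\pair{\rho;\half{a+1},\dots,-\half{a+1}}$ produces terms indexed by $k,l\ge 0$ with $k+l\le a+2$, and extracting the $\rho|\cdot|^{\half{a+1}}$-isotypic part forces $l=1,k=0$ (the other terms contribute exponents $-\half{a+1}$ or nothing of the right type, using that $a\ge 1$ so $\half{a+1}\ne -\half{a+1}$). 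This isolates the summand $\St(\rho,a)\rtimes \pi(\psi',\eta')$ together with the $\mu^*_\rho(\pi(\psi',\eta'))$-corrections; the latter corrections are then matched, via Lemma \ref{lem5} and a multiplicity-one / dimension count in the style of Lemma \ref{dim}, against the embedding of $\pi(\phi^\flat,\eta^\flat)$ into $\St(\rho,a)\rtimes\pi(\psi',\eta')$. The character $\eta^\flat$ pulls back to $\eta_\gg$ under the canonical identification $\AA_{\phi^\flat}=\AA_{\phi_\gg}$, and this is exactly the identification built into the statement; one checks the sign/character data survives the Jacquet functor, which is where the hypothesis that the surjection $\AA_\phi\twoheadrightarrow A_\phi$ preserves central elements gets used.

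The main obstacle I expect is the bookkeeping needed to guarantee that, after extracting the $\rho|\cdot|^{\half{a+1}}$-isotypic component, exactly one irreducible constituent survives and it is the expected $\pi(\phi^\flat,\eta^\flat)$ rather than $\pi(\phi^\flat,\eta')$ for some other character $\eta'$ or $0$. A priori $\Jac_{\rho|\cdot|^{\half{a+1}}}$ applied to a tempered representation can produce a non-tempered, reducible output, and controlling its semisimplification requires either M{\oe}glin's precise irreducibility results for the Jacquet modules appearing in her construction, or an independent argument via the endoscopic character identities of Theorem \ref{LLC}. Concretely, the delicate point is the ``parity'' case analysis in Corollary \ref{cor.tdc}: when $a$ is such that intermediate exponents in the Steinberg block coincide in absolute value, or when $\rho\boxtimes S_{a-2}$ already occurs in $\psi$, extra terms of type $\rho$ can appear and one must verify they do not contribute to the $\rho|\cdot|^{\half{a+1}}$-part. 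I would handle this by first establishing the result for $\phi$ discrete (where multiplicity-freeness kills all ambiguity, and this is essentially Xu's Lemma 7.3 combined with M{\oe}glin's discrete-series construction), and then bootstrapping to general $\phi\in\Phi_\gp(G)$ by a further induction that peels off repeated summands, using at each stage that $\pi(\phi,\eta)$ is either irreducible or zero on the enhanced component group.
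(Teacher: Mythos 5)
The paper does not prove Theorem~\ref{moe}: it is stated as a citation of M{\oe}glin's construction of (tempered) $A$-packets \cite{Moe-06b, Moe-09}, with Xu \cite{X2} cited for the comparison with Arthur's packets. So there is no in-paper argument to compare against, and any proof attempt has to reconstruct a substantial piece of external theory rather than unwind an elided step.

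Your reduction contains a concrete error: choosing the largest $i_0$ with $a'_{i_0}>a_{i_0}$ and replacing $a'_{i_0}$ by $a'_{i_0}-2$ does not in general preserve $a'_1<\dots<a'_t$, because maximality of $i_0$ constrains only indices $>i_0$ and one can have $a'_{i_0-1}=a'_{i_0}-2$ (for instance $\phi=\rho\boxtimes(S_1\oplus S_1\oplus S_3)$, $\phi_\gg=\rho\boxtimes(S_1\oplus S_3\oplus S_5)$, where $i_0=3$ and the decrement collides with $a'_2=3$). The composition in the theorem also begins with $\Jac_{\rho|\cdot|^{(a'_1-1)/2}}$, so the natural atomic step is stripping the \emph{first} nontrivial block, not the last. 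More seriously, the atomic step itself is left unproven. Corollary~\ref{cor.tdc} computes the $\rho$-isotypic Jacquet module of an induced representation $\St(\rho,a+2)\rtimes(\cdot)$, but it cannot by itself identify which irreducible constituent carries which character of the component group, nor whether the result is $\pi(\phi^\flat,\eta^\flat)$ rather than $\pi(\phi^\flat,\eta')$ for some other $\eta'$, or zero; that identification is precisely M{\oe}glin's content. You acknowledge this, deferring to ``M{\oe}glin's precise irreducibility results'' or ``endoscopic character identities,'' but the in-paper tool one would reach for, namely Theorem~\ref{jac1}(2), is itself proved later \emph{using} Theorem~\ref{moe}, so invoking it here is circular. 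What you have is a plausible reorganization of how a proof of M{\oe}glin's theorem might go, not a self-contained argument that could replace the citation.
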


Using this theorem repeatedly, 
we can construct $L$-packets $\Pi_\phi$ for $\phi \in \Phi_\gp(G)$ 
from $L$-packets associated to discrete $L$-parameters for bigger groups..

\begin{ex}
We construct $\Pi_\phi$ for $\phi = S_2 \oplus S_4 \oplus S_4 \oplus S_6 \oplus S_6 \in \Phi_\gp(\SO_{23})$.
Note that $A_\phi = (\Z/2\Z) \alpha_{S_2} \oplus (\Z/2\Z) \alpha_{S_4} \oplus (\Z/2\Z) \alpha_{S_6}$ 
with $z_{\phi} = \alpha_{S_2}$.
Let $\eta \in \widehat{A_\phi}$.
We write $\eta(\alpha_{S_{2a}}) = \eta_{a} \in \{\pm1\}$.
If $\eta(z_\phi) = 1$, then $\eta_1 = +1$.
\par

Now we take new $L$-parameters
\begin{align*}
\phi_\gg &= S_2 \oplus S_4 \oplus S_6 \oplus S_8 \oplus S_{10} \in \Phi_\disc(\SO_{31}), \\
\phi'       &= S_2 \oplus S_4 \oplus S_4 \oplus S_8 \oplus S_{10} \in \Phi_\disc(\SO_{29}), \\
\phi''      &= S_2 \oplus S_4 \oplus S_4 \oplus S_6 \oplus S_{10} \in \Phi_\disc(\SO_{27}), 
\end{align*}
and we consider $\eta_\gg \in \widehat{A_{\phi_\gg}}$, $\eta' \in \widehat{A_{\phi'}}$ 
and $\eta'' \in \widehat{A_{\phi''}}$ given by
\begin{itemize}
\item
$\eta_\gg(\alpha_{S_2}) = \eta'(\alpha_{S_2}) = \eta''(\alpha_{S_2}) = \eta_1 = +1$; 
\item
$\eta_\gg(\alpha_{S_4}) = \eta_\gg(\alpha_{S_6}) = \eta'(\alpha_{S_4}) = \eta''(\alpha_{S_4}) = \eta_2$; 
\item
$\eta_\gg(\alpha_{S_8}) = \eta_\gg(\alpha_{S_{10}}) = \eta'(\alpha_{S_8}) = \eta'(\alpha_{S_{10}}) 
= \eta''(\alpha_{S_6}) = \eta''(\alpha_{S_{10}}) = \eta_3$.
\end{itemize}
Then Theorem \ref{moe} says that 
\begin{align*}
\Jac_{|\cdot|^{\half{5}}} (\pi(\phi_\gg, \eta_\gg)) &= \pi(\phi', \eta'), \\
\Jac_{|\cdot|^{\half{7}}} (\pi(\phi', \eta')) &= \pi(\phi'', \eta''), \\
\Jac_{|\cdot|^{\half{9}}, |\cdot|^{\half{7}}} (\pi(\phi'', \eta'')) &= \pi(\phi, \eta).
\end{align*}
\end{ex}

%\section{Description of Jacquet modules}
%\section{Description of Jacquet modules}
\section{Description of Jacquet modules}\label{desc}
In this section, we state the main theorems, 
which compute 
the semisimplifications of the Jacquet modules of $\pi(\phi, \eta)$ for $\phi \in \Phi_\gp(G)$.

%\subsection{Statements}
\subsection{Statements}
Note that: 
\begin{lem}\label{nonvanish}
For $\phi \in \Phi_\gp(G)$ and $x \in \R$, 
if $\Jac_{\rho|\cdot|^x}(\pi) \not= 0$ for some $\pi \in \Pi_\phi$, 
then $x$ is a non-negative half-integer and $\rho \boxtimes S_{2x+1} \subset \phi$.
\end{lem}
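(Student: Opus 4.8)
The plan is to induct on $\mathrm{rank}(G)$: when $\phi$ is not discrete I will peel off a Steinberg factor from $\pi$ by Theorem~\ref{LLC}(4) and track its effect on $\Jac_{\rho|\cdot|^x}$ by Tadi\'c's formula, and the discrete series case will be the only one needing genuine input. To begin, suppose $\Jac_{\rho|\cdot|^x}(\pi)\ne 0$. By Lemma~\ref{lem5}(1) there is an irreducible $\pi'$ with $\pi\hookrightarrow\rho|\cdot|^x\rtimes\pi'$, so $\rho|\cdot|^x$ occurs in the cuspidal support of $\pi$; since $\pi\in\Pi_\phi$ and $\phi$ is a sum of self-dual representations $\rho_i\boxtimes S_{a_i}$, this already forces $\rho$ to be self-dual of the same type as $\phi$ and $x\in\half{1}\Z$, while Casselman's criterion, applied to the tempered representation $\pi$, forces $x\ge 0$. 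The content that remains is the assertion $\rho\boxtimes S_{2x+1}\subset\phi$.

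Suppose first that $\phi$ is not discrete, so some summand $\rho_0\boxtimes S_b$ of $\phi$ has multiplicity at least $2$; write $\phi=(\rho_0\boxtimes S_b)\oplus\phi_0\oplus(\rho_0\boxtimes S_b)$ with $\phi_0\in\Phi_\gp(G_0)$ and $\mathrm{rank}(G_0)<\mathrm{rank}(G)$. By Theorem~\ref{LLC}(4) one has $\pi\hookrightarrow\St(\rho_0,b)\rtimes\pi_0$ for some $\pi_0\in\Pi_{\phi_0}$, and since the Jacquet functor is exact, $\Jac_{\rho|\cdot|^x}(\pi)\ne0$ forces $\Jac_{\rho|\cdot|^x}(\St(\rho_0,b)\rtimes\pi_0)\ne0$. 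Now I would expand $\mu^*_\rho(\St(\rho_0,b)\rtimes\pi_0)$ via Tadi\'c's formula (Theorem~\ref{tdc} and Corollary~\ref{cor.tdc}): any summand whose $\GL$-part is a single block $\rho|\cdot|^x$ must come either from the Steinberg, where — since $M^*(\St(\rho_0,b))$ is supported on $\rho_0$-types — it forces $\rho\cong\rho_0$ and $x=\half{b-1}$, or from a block already present in $\mu^*_\rho(\pi_0)$. Hence either $x=\half{b-1}$, in which case $x\ge0$ and $\rho\boxtimes S_{2x+1}=\rho_0\boxtimes S_b\subset\phi$, or $\Jac_{\rho|\cdot|^x}(\pi_0)\ne0$, in which case the induction hypothesis for $\pi_0$ — whose $L$-parameter $\phi_0$ is contained in $\phi$ — completes the step.

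The remaining case is that $\phi$ is discrete, so $\pi$ is a discrete series and no Steinberg can be removed as above. Here I would invoke M{\oe}glin's explicit description of the Jacquet modules of discrete series — equivalently Xu's determination \cite[Lemma 7.3]{X1} of the cuspidal support of discrete series from their $L$-parameters — which gives exactly that $\Jac_{\rho|\cdot|^x}(\pi)\ne0$ forces $x=\half{a-1}$ for some $\rho\boxtimes S_a\subset\phi$; alternatively the general good-parity case could be deduced from this discrete case via M{\oe}glin's construction of tempered $L$-packets (Theorem~\ref{moe}). This discrete case is the main obstacle: it amounts to excluding the ``interior'' exponents $x=\half{a-1}-k$ with $1\le k\le a-1$, which the cuspidal support and temperedness arguments of the first paragraph do not by themselves rule out and which rests on M{\oe}glin's combinatorial analysis of discrete series.
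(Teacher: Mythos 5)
Your argument is essentially the paper's: for non-discrete $\phi$ you peel off a repeated summand as a Steinberg factor via Theorem~\ref{LLC}(4), track $\Jac_{\rho|\cdot|^x}$ through the induced representation with Tadi\'c's formula, and descend by induction, while the discrete case is delegated to \cite[Lemma 7.3]{X1} exactly as in the text. One aside in your opening paragraph is wrong and should be dropped: $\rho$ need not be self-dual of the same type as $\phi$ (when $2x+1$ is even, $\rho\boxtimes S_{2x+1}$ being of the type of $\phi$ forces $\rho$ to be of the \emph{opposite} type), but since this claim is never used the rest of the proof is unaffected.
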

\begin{proof}
When $\phi \in \Phi_\disc(G)$, it follows from \cite[Lemma 7.3]{X1}.
We may assume that $\phi \in \Phi_\gp(G) \setminus \Phi_\disc(G)$. 
Then there exists an irreducible representation $\rho' \boxtimes S_{a}$ 
which $\phi$ contains at least multiplicity two.
By Theorem \ref{LLC} (4), we have 
\[
\pi \hookrightarrow \St(\rho', a) \rtimes \pi_0
\]
for some $\pi_0 \in \Pi_{\phi_0}$ with $\phi_0 = \phi - (\rho' \boxtimes S_{a})^{\oplus2}$.
Then by Corollary \ref{cor.tdc}, we have
\begin{align*}
\Jac_{\rho|\cdot|^{x}}(\pi) 
&\hookrightarrow 
(1 \otimes \St(\rho', a)) \rtimes \Jac_{\rho|\cdot|^{x}}(\pi_0)
\\&+
\left\{
\begin{aligned}
&2\pair{\rho; x,\dots,-(x-1)} \rtimes \pi_0 \iif \rho' \cong \rho, a = 2x+1, \\
&0 \other.
\end{aligned}
\right.
\end{align*}
Hence if $\Jac_{\rho|\cdot|^{x}}(\pi) \not= 0$, 
then $\Jac_{\rho|\cdot|^{x}}(\pi_0) \not= 0$ or
$\rho \boxtimes S_{2x+1} \cong \rho' \boxtimes S_{a} \subset \phi$.
By induction, we conclude that $\rho \boxtimes S_{2x+1} \subset \phi$ as desired.
\end{proof}

The following is the first main theorem, 
which is a description of $\Jac_{\rho|\cdot|^x}(\pi(\phi, \eta))$.

\begin{thm}\label{jac1}
Let $\phi \in \Phi_\gp(G)$ and $\eta \in \widehat{\AA_\phi}$ such that $\pi(\phi, \eta) \not= 0$. 
Fix a non-negative half-integer $x \in (1/2)\Z$.
Write 
\[
\phi = \phi_0 \oplus (\rho \boxtimes S_{2x+1})^{\oplus m}
\] 
with $\rho \boxtimes S_{2x+1} \not \subset \phi_0$ and $m > 0$.

\begin{enumerate}
\item
Assume that $m \geq 3$.
Take $\delta \in \{1,2\}$ such that $\delta \equiv m \bmod 2$.
Then 
\begin{align*}
&\Jac_{\rho|\cdot|^x} (\pi(\phi, \eta)) 
\\&= 
(m-\delta) \cdot \pair{\rho; \underbrace{x, x-1, \dots,-(x-1)}_{2x}} 
\rtimes \pi \left(\phi - (\rho \boxtimes S_{2x+1})^{\oplus 2}, \eta \right)
\\&+
\underbrace{\St(\rho, 2x+1) \times \dots \times \St(\rho, 2x+1)}_{(m-\delta)/2} 
\rtimes \Jac_{\rho|\cdot|^{x}} \left(\pi \left(\phi_0 \oplus (\rho \boxtimes S_{2x+1})^{\oplus \delta}, \eta \right)\right).
\end{align*}
Here, we canonically identify the (usual) component groups of $\phi - (\rho \boxtimes S_{2x+1})^{\oplus 2}$ 
and $\phi_0 \oplus (\rho \boxtimes S_{2x+1})^{\oplus \delta}$
with $A_\phi$, so that we regard $\eta$ as a character of these groups.

\item
Assume that $x > 0$ and $m = 1$. 
Set 
\[
\phi' = \phi - (\rho \boxtimes S_{2x+1}) \oplus (\rho \boxtimes S_{2x-1}).
\] 
There is a canonical inclusion $\AA_{\phi'} \hookrightarrow \AA_\phi$, 
which is in fact bijective if $x > 1/2$.
Let $\eta' \in \widehat{\AA_{\phi'}}$ be the character corresponding to $\eta \in \widehat{\AA_\phi}$, 
i.e., $\eta' = \eta|\AA_{\phi'}$.
Then 
\begin{align*}
\Jac_{\rho|\cdot|^x} (\pi(\phi, \eta)) = \pi(\phi', \eta').
\end{align*}
In particular, $\Jac_{\rho|\cdot|^x} (\pi(\phi, \eta))$ is irreducible or zero.

\item
Assume that $x > 0$ and $m = 2$. 
Set $\eta_+ = \eta$, and
take the unique character $\eta_- \in \widehat{\AA_\phi}$ 
so that $\eta_-|\AA_{\phi_0} = \eta_+|\AA_{\phi_0}$, $\eta_-(z_\phi) = \eta_+(z_\phi) = 1$ 
but $\eta_- \not= \eta_+$.
For $\phi'$ as in (3) and for $\epsilon \in \{\pm1\}$, 
let $\eta'_\epsilon \in \widehat{\AA_{\phi'}}$ be the character corresponding to $\eta_\epsilon \in \widehat{\AA_\phi}$
via the canonical inclusion $\AA_{\phi'} \hookrightarrow \AA_\phi$.
Then 
\begin{align*}
\Jac_{\rho|\cdot|^x} (\pi(\phi, \eta)) = 
\pair{\rho; x, x-1, \dots, -(x-1)} \rtimes \pi(\phi_0, \eta|\AA_{\phi_0})
+ \pi(\phi', \eta'_+) - \pi(\phi', \eta'_-).
\end{align*}

\item
Assume that $x = 0$.
If $m=1$, then $\Jac_{\rho}(\pi(\phi, \eta)) = 0$.
If $m=2$, then $\Jac_{\rho}(\pi(\phi, \eta)) = \pi(\phi_0, \eta|\AA_{\phi_0})$.

\end{enumerate}
\end{thm}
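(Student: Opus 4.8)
The plan is to reduce everything to M\oe glin's construction (Theorem \ref{moe}) together with Tadi\'c's formula (Corollary \ref{cor.tdc}) and the combinatorial control over $\GL$-side Jacquet modules (Lemmas \ref{GL} and \ref{dim}). The four cases have genuinely different flavors, so I would organize the proof by first disposing of the multiplicity $\geq 3$ case (1), which is purely a ``shift'' argument, then treating the $x>0$, $m=1$ case (2) as the heart of the matter, then deducing the $m=2$ cases (3) and (4) from (2) by a dimension/irreducibility bookkeeping argument using the inclusion $A_{\phi_0}\hookrightarrow A_\phi$ from Theorem \ref{LLC}(4).

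For part (1): since $m\geq 3$ and $\delta\equiv m\bmod 2$, M\oe glin's recipe lets us write $\pi(\phi,\eta)$ as a $\Jac$-descent of $\pi(\phi_\gg,\eta_\gg)$ where $\phi_\gg$ replaces the $m$ copies of $\rho\boxtimes S_{2x+1}$ by $\rho\boxtimes S_{2x+1}$ (one copy, or $\rho\boxtimes S_{2x-1}\oplus\rho\boxtimes S_{2x+1}$ depending on parity) plus strictly larger blocks $\rho\boxtimes S_{a'_i}$. By Theorem \ref{LLC}(4), $\pi(\phi_\gg,\eta_\gg)\hookrightarrow \St(\rho,a'_{m})\times\dots\rtimes(\text{smaller packet})$, and the point is that applying the long string of $\Jac_{\rho|\cdot|^{y}}$ operators from M\oe glin's formula commutes past the $\GL$-factors $\St(\rho,a'_i)$ for $i$ corresponding to blocks one does \emph{not} descend, producing exactly the two terms in the claimed formula: the first term is the ``collision'' contribution $\langle\rho;x,\dots,-(x-1)\rangle$ appearing $(m-\delta)$ times (two per merged pair, counted with the appropriate multiplicity), and the second term is the ``pass-through'' contribution $\St(\rho,2x+1)^{\times(m-\delta)/2}$ tensored against $\Jac_{\rho|\cdot|^x}$ of the residual $\delta$-copy packet. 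The bookkeeping is to verify, via Corollary \ref{cor.tdc}, that no other exponents survive; I would do this by tracking cuspidal supports and invoking Lemma \ref{nonvanish}.

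For part (2) ($x>0$, $m=1$): here $\rho\boxtimes S_{2x+1}$ occurs with multiplicity one, so $\phi$ is ``good parity'' but the single block can be treated via M\oe glin. Choose $\phi_\gg$ enlarging $\rho\boxtimes S_{2x+1}$ to $\rho\boxtimes S_{2x'+1}$ with $x'>x$ large and all other blocks untouched; then $\pi(\phi,\eta)=\Jac_{\rho|\cdot|^{x'},\dots,\rho|\cdot|^{x+1}}(\pi(\phi_\gg,\eta_\gg))$. Applying one more $\Jac_{\rho|\cdot|^x}$ gives $\Jac_{\rho|\cdot|^{x'},\dots,\rho|\cdot|^{x}}(\pi(\phi_\gg,\eta_\gg))$, and this is again a M\oe glin descent, now landing on $\phi'=\phi-(\rho\boxtimes S_{2x+1})\oplus(\rho\boxtimes S_{2x-1})$ (when $x>1/2$) or on a parameter where the block has been absorbed (when $x=1/2$, matching the statement that the inclusion $\AA_{\phi'}\hookrightarrow\AA_\phi$ need not be surjective). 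The identification $\eta'=\eta|\AA_{\phi'}$ comes from the compatibility of characters in M\oe glin's construction and the canonical identification $\AA_{\phi_\gg}=\AA_\phi$. The one subtlety is the boundary case $x=1/2$, where $\rho\boxtimes S_{2x-1}=\rho\boxtimes S_0$ ``disappears'' and one must check the descent still makes sense and $\pi(\phi',\eta')$ is interpreted correctly; also the possibility that $\pi(\phi',\eta')=0$ when $\eta'$ fails the central-character condition, which is why the statement says ``irreducible or zero''.

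For parts (3) and (4) ($m=2$, and $m\leq 2$ with $x=0$): the strategy is to write $\pi(\phi,\eta)\hookrightarrow\St(\rho,2x+1)\rtimes\pi(\phi_0,\eta|\AA_{\phi_0})$ by Theorem \ref{LLC}(4), apply Corollary \ref{cor.tdc} to compute $\mu^*_\rho$ of the induced representation, extract the $\rho|\cdot|^x$-isotypic part, and match. The induced-representation computation produces $\langle\rho;x,\dots,-(x-1)\rangle\rtimes\pi(\phi_0,\eta|\AA_{\phi_0})$ plus $\St(\rho,2x+1)\rtimes\Jac_{\rho|\cdot|^x}(\pi(\phi_0,\eta|\AA_{\phi_0}))$; by Lemma \ref{nonvanish} the latter $\Jac$ vanishes unless $\rho\boxtimes S_{2x+1}\subset\phi_0$, which it is not, so that term drops. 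But $\pi(\phi,\eta)$ is only a summand of the induction, so I get an \emph{inequality} $\Jac_{\rho|\cdot|^x}(\pi(\phi,\eta))\leq\langle\rho;x,\dots,-(x-1)\rangle\rtimes\pi(\phi_0,\eta|\AA_{\phi_0})+(\text{correction from the other summand})$, and the other summand of $\St(\rho,2x+1)\rtimes\pi(\phi_0,\eta|\AA_{\phi_0})$ is $\pi(\phi,\eta')$ for the ``companion'' character $\eta'$ that flips on the $\rho\boxtimes S_{2x+1}$-coordinate. The resolution is: add the two identities for $\eta_+$ and $\eta_-$, use that the sum $\pi(\phi,\eta_+)+\pi(\phi,\eta_-)=\St(\rho,2x+1)\rtimes\pi(\phi_0,\eta|\AA_{\phi_0})$, compute $\Jac_{\rho|\cdot|^x}$ of the right side exactly via Corollary \ref{cor.tdc}, and then separate the two contributions using part (2) applied to $\phi'$ (which tells us $\Jac_{\rho|\cdot|^x}$ sends the $\phi$-packet into the $\phi'$-packet), plus the fact that $\pi(\phi',\eta'_+)$ and $\pi(\phi',\eta'_-)$ are distinct irreducibles. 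I expect the main obstacle to be precisely this last separation step in case (3): showing that the ``difference'' $\pi(\phi',\eta'_+)-\pi(\phi',\eta'_-)$ (which is not effective!) is the correct correction term requires carefully pinning down which irreducible constituents of $\langle\rho;x,\dots,-(x-1)\rangle\rtimes\pi(\phi_0,\eta|\AA_{\phi_0})$ lie in the image of each $\Jac$ and with what multiplicity; I would handle this by computing in the Grothendieck group $\RR(G)$, using Theorem \ref{smc} to control reducibility of the relevant standard modules, and checking the $\AA$-character labels are consistent with M\oe glin's construction for $\phi'$.
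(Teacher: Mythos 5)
Your overall organization (handle the high-multiplicity case via a reduction, then treat $m=1$ by M\oe glin's construction, then deduce $m\leq 2$ by summing the two characters) matches the paper's structure for parts (2)--(4), but there are three concrete problems.

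For part (1) your route does not work as stated, and misses a much simpler argument. You write that ``$\pi(\phi_\gg,\eta_\gg)\hookrightarrow\St(\rho,a'_m)\times\dots\rtimes(\text{smaller packet})$'' and propose commuting the descent operators past the $\GL$-factors. But $\phi_\gg$ is \emph{discrete}, so $\pi(\phi_\gg,\eta_\gg)$ is a discrete series representation and admits no such embedding into a proper parabolic induction. The paper's proof is direct: when $m\geq 3$ the component group of $\phi-(\rho\boxtimes S_{2x+1})^{\oplus 2}$ is canonically isomorphic to $A_\phi$ (since $\rho\boxtimes S_{2x+1}$ still occurs), so Theorem \ref{LLC}(4) gives the \emph{equality} $\pi(\phi,\eta)=\St(\rho,2x+1)\rtimes\pi(\phi-(\rho\boxtimes S_{2x+1})^{\oplus 2},\eta)$, and Corollary \ref{cor.tdc} then produces the two terms immediately; the claimed formula follows by a two-step induction on $m$. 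M\oe glin's construction is never needed here.

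For part (2), your sketch handles the case $\pi(\phi',\eta')\neq 0$ in the same spirit as the paper (express $\pi(\phi',\eta')$ as a M\oe glin descent from a discrete $\phi'_\gg$, isolate one $\Jac_{\rho|\cdot|^x}$ step, and use Lemma \ref{lem5}(2) to commute it to the end; note you must enlarge \emph{all} repeated blocks, not only $\rho\boxtimes S_{2x+1}$, to reach a discrete $\phi'_\gg$). But you leave the case $\pi(\phi',\eta')=0$ entirely unproved --- you only remark that it explains ``irreducible or zero.'' This is a real gap: one must show $\Jac_{\rho|\cdot|^x}(\pi(\phi,\eta))$ actually \emph{vanishes} in that case. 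The paper does this by reducing to the discrete case via Xu's lemma, then inducting on the multiplicity structure of $\phi$ and applying Tadi\'c's formula together with the first half of (2).

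For part (3), the ``add the two identities and separate'' outline is correct in broad strokes, but the separation step is exactly the hard part and you essentially defer it (``I would handle this by computing in the Grothendieck group\dots and checking the $\AA$-character labels\dots''). The paper needs two nontrivial inputs you do not supply a substitute for: (i) the structure of the standard module $\Pi=\pair{\rho;x,\dots,-(x-1)}\rtimes\pi(\phi_0,\eta|\AA_{\phi_0})$ as an extension of its Langlands quotient $\sigma$ by the semisimple module $\pi(\phi',\eta'_+)\oplus\pi(\phi',\eta'_-)$ (Proposition \ref{sci}, which itself needs a Plancherel-measure/cuspidal-support argument and the small-standard-module Lemma \ref{l2}); and (ii) a multiplicity statement $\Jac_{\rho|\cdot|^x,\rho|\cdot|^x}(\pi(\phi,\eta))\subseteq 2\cdot\pi(\phi_1,\eta_1)$ (Lemma \ref{2times}, built from the commutation inequality Lemma \ref{lem2}), which pins down that $2\cdot\pi(\phi',\eta'_\epsilon)$ --- not $\pi(\phi',\eta'_+)+\pi(\phi',\eta'_-)$ --- sits inside $\Jac_{\rho|\cdot|^x}(\pi(\phi,\eta_\epsilon))$. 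Without (ii) there is no way to distribute the coefficient $2$ asymmetrically between the two discrete constituents and arrive at the signed expression $\pi(\phi',\eta'_+)-\pi(\phi',\eta'_-)$; the Grothendieck-group identity for the sum $\eta_+ + \eta_-$ alone is symmetric in $\eta_\pm$ and cannot distinguish them. Theorem \ref{smc} is not the right tool here because the relevant standard modules are manifestly reducible.
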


When $\phi \in \Phi_\disc(G)$, Theorem \ref{jac1} has been already proven by Xu (\cite[Lemma 7.3]{X1}).
In (2) (\resp (3)), we note that
$\pi(\phi', \eta')$ (\resp $\pi(\phi', \eta'_\epsilon)$) can be zero even if $\pi(\phi, \eta) \not= 0$.
In (3), the character $\eta_-$ is characterized so that 
\[
\pi(\phi, \eta_+) \oplus \pi(\phi, \eta_-) = \St(\rho, 2x+1) \rtimes \pi(\phi_0, \eta|\AA_{\phi_0}).
\]
\par

The second main theorem concerns $\mu^*_\rho(\pi)$.

\begin{thm}\label{jac2}
Let $\phi \in \Phi_\gp(G)$, 
and write
\[
\phi = \left(\bigoplus_{i=1}^{t} \rho \boxtimes S_{a_i} \right) \oplus \phi_e
\]
with $a_1 \leq \dots \leq a_t$ and $\rho \boxtimes S_a \not\subset \phi_e$ for any $a>0$, 
and $a_i = 2x_i+1$.
For $0 \leq m \leq (2d)^{-1} \cdot \dim(\phi)$, 
we denote by $K_\phi^{(m)}$ the set of tuples of integers $\ub{k} = (k_1, \dots, k_t)$ such that 
\begin{itemize}
\item
$0 \leq k_i \leq a_i$ for any $i$; 
\item
$k_{i-1} \geq k_{i}$ if $a_{i-1} = a_i$; 
\item
$k_1+\dots+k_t = m$.
\end{itemize}
For $\ub{k} \in K_\phi^{(m)}$, 
set 
\[
\ub{x}(\ub{k}) 
= (\underbrace{x_1, \dots, x_1-k_1+1}_{k_1}, \dots, \underbrace{x_t, \dots, x_t-k_t+1}_{k_t})
\in \Omega_{m}.
\]
For $\ub{k}, \ub{l} \in K_\phi$, 
we set 
\[
m_{\ub{k}, \ub{l}} = \dim_\C \Jac_{\rho|\cdot|^{\ub{x}(\ub{k})}}(\Delta_{\ub{x}(\ub{l})}), 
\]
and define $(m'_{\ub{k}, \ub{l}})_{\ub{k}, \ub{l} \in K_\phi^{(m)}}$ 
to be the inverse matrix of $(m_{\ub{k}, \ub{l}})_{\ub{k}, \ub{l} \in K_\phi^{(m)}}$, i.e., 
\[
\sum_{\ub{k'} \in K_\phi^{(m)}} m'_{\ub{k}, \ub{k'}} m_{\ub{k'}, \ub{l}} = 
\left\{
\begin{aligned}
&1 \iif \ub{k} = \ub{l}, \\
&0 \iif \ub{k} \not= \ub{l}.
\end{aligned}
\right.
\]
Then for $\pi \in \Pi_\phi$, we have
\begin{align*}
\mu^*_\rho(\pi) 
&= \sum_{m=0}^{(2d)^{-1}\dim(\phi)}
\sum_{\ub{k}, \ub{l} \in K_\phi^{(m)}} 
%\left(\prod_{(x,k) \in \{(x_i,k_i) | k_i \not= 0\}} \frac{1}{m_{(x,k)}!} \right)
m'_{\ub{k}, \ub{l}} \cdot
\Delta_{\ub{x}(\ub{k})}
\otimes 
\Jac_{\rho|\cdot|^{\ub{x}(\ub{l})}}(\pi). 
\end{align*}
\end{thm}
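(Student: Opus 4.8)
The plan is to compute $\mu^*_\rho(\pi)$ by first extracting all $\GL$-data of type $\rho$ via iterated $\Jac$-functors, and then inverting the combinatorial relationship between these functors and the coproduct $m^*$ on the $\GL$-side. The starting observation is that $\mu^*_\rho(\pi)$ is a sum of terms $\tau \otimes \pi_0$ with $\tau \in \Irr_\rho(\GL_{dm}(F))$; by Lemma \ref{nonvanish} applied along the way, every such $\tau$ has cuspidal support concentrated on $\{\rho|\cdot|^x : x \in (1/2)\Z_{\geq 0},\ \rho \boxtimes S_{2x+1} \subset \phi\}$, and moreover one checks — using Lemma \ref{nonvanish} and Lemma \ref{lem5}(1) together with the shape of $\phi$ — that the only $\tau$'s that can occur are the $\tau_{\ub{x}(\ub{k})}$ for $\ub{k} \in K_\phi^{(m)}$; by Lemma \ref{GL} these are exactly the irreducible representations in $\Irr_\rho(\GL_{dm}(F))$ whose defining segments $[x_i - k_i + 1, x_i]$ are "available" in $\phi$ and satisfy the ordering constraints forced by $a_1 \le \dots \le a_t$ and by the discreteness/good-parity structure (this is where the condition $k_{i-1} \ge k_i$ when $a_{i-1} = a_i$ comes from). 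So write $\mu^*_\rho(\pi) = \sum_m \sum_{\ub{l} \in K_\phi^{(m)}} c_{\ub{l}}\, \tau_{\ub{x}(\ub{l})} \otimes \pi_{\ub{l}}$ for unknowns $c_{\ub{l}} \in \Z_{\geq 0}$ and $\pi_{\ub{l}} \in \RR(G_0)$.

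Next I would apply $\Jac_{\rho|\cdot|^{\ub{x}(\ub{k})}}$ to $\pi$ for each $\ub{k} \in K_\phi^{(m)}$. Since $\Jac_{\rho|\cdot|^{\ub{x}(\ub{k})}}$ only sees the type-$\rho$ part and factors through $\mu^*_\rho$, we get
\begin{align*}
\Jac_{\rho|\cdot|^{\ub{x}(\ub{k})}}(\pi) = \sum_{\ub{l} \in K_\phi^{(m)}} c_{\ub{l}} \cdot \dim_\C \Jac_{\rho|\cdot|^{\ub{x}(\ub{k})}}(\tau_{\ub{x}(\ub{l})}) \cdot \pi_{\ub{l}} = \sum_{\ub{l}} m_{\ub{k},\ub{l}}\, c_{\ub{l}}\, \pi_{\ub{l}}.
\end{align*}
(Here I am implicitly using that $\Jac_{\rho|\cdot|^{\ub{x}(\ub{k})}}$ applied to a term $\tau_{\ub{x}(\ub{l})} \otimes \pi_{\ub{l}}$ gives $\big(\Jac_{\rho|\cdot|^{\ub{x}(\ub{k})}}\tau_{\ub{x}(\ub{l})}\big) \otimes \pi_{\ub{l}}$ plus possibly terms where some $\rho|\cdot|^x$ is pulled off the $G_0$-factor — these extra terms must be accounted for, but by Lemma \ref{nonvanish} for $\phi_0$-type parameters they only contribute type-$\rho$ data already absorbed, and a careful bookkeeping shows they do not interfere with the leading matrix identity. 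This bookkeeping is the one genuinely delicate point.) By Lemma \ref{dim} the matrix $(m_{\ub{k},\ub{l}})$ is, after ordering $K_\phi^{(m)}$ by the lexicographic order on $\ub{x}(\ub{k})$, lower-triangular with strictly positive diagonal entries $\prod_{(x,y)} m_{(x,y)}!$, hence invertible over $\Q$; this is exactly what makes $(m'_{\ub{k},\ub{l}})$ well-defined.

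I would then invert: $c_{\ub{l}}\,\pi_{\ub{l}} = \sum_{\ub{k}} m'_{\ub{l},\ub{k}} \Jac_{\rho|\cdot|^{\ub{x}(\ub{k})}}(\pi)$. Substituting back into the expression for $\mu^*_\rho(\pi)$, and noting $\tau_{\ub{x}(\ub{k})} = \Delta_{\ub{x}(\ub{k})}$ in $\RR$ up to the difference between the irreducible subrepresentation and the full standard module $\Delta_1 \times \dots \times \Delta_t$ — which I must handle, since the theorem states $\Delta_{\ub{x}(\ub{k})}$, not $\tau_{\ub{x}(\ub{k})}$ — gives the claimed formula, provided that when $\tau_{\ub{x}(\ub{l})}$ is replaced by $\Delta_{\ub{x}(\ub{l})}$ the combinatorial coefficients $m'_{\ub{k},\ub{l}}$ are precisely the ones making the substitution consistent. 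In fact, the cleanest route is to never pass through $\tau_{\ub{x}(\ub{l})}$ at all: work directly in $\RR$ with the basis-free identity $\mu^*_\rho(\pi) = \sum_m \sum_{\ub{k},\ub{l}} m'_{\ub{k},\ub{l}}\, \Delta_{\ub{x}(\ub{k})} \otimes \Jac_{\rho|\cdot|^{\ub{x}(\ub{l})}}(\pi)$ as an ansatz, apply $\Jac_{\rho|\cdot|^{\ub{x}(\ub{j})}}$ to both sides, use $\dim_\C \Jac_{\rho|\cdot|^{\ub{x}(\ub{j})}}(\Delta_{\ub{x}(\ub{k})}) = m_{\ub{j},\ub{k}}$, and check that $\sum_{\ub{k}} m_{\ub{j},\ub{k}}\, m'_{\ub{k},\ub{l}} = \delta_{\ub{j},\ub{l}}$ reduces the right side to $\Jac_{\rho|\cdot|^{\ub{x}(\ub{j})}}(\pi)$ — an identity in $\RR(G_0)$ that holds by construction of the inverse matrix; since the $\Jac$-functors for all $\ub{j}$, together with the projection to the $\GL_0$-component, detect the full type-$\rho$ part of $\mu^*_\rho$, this forces equality.

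The main obstacle, as flagged above, is the "leakage" issue: when computing $\Jac_{\rho|\cdot|^{\ub{x}(\ub{k})}}(\pi)$, the intermediate steps $\Jac_{\rho|\cdot|^{x}}$ can pull a $\rho|\cdot|^x$ off either the $\GL$-factor or the $G_0$-factor of a partially-reduced term, so the naive identity $\Jac_{\rho|\cdot|^{\ub{x}(\ub{k})}}(\tau \otimes \pi_0) = (\Jac_{\rho|\cdot|^{\ub{x}(\ub{k})}}\tau) \otimes \pi_0$ is false in general. Resolving this cleanly requires either (i) arguing that the functor $\Jac_{\rho|\cdot|^{\ub{x}(\ub{k})}}$, when the exponents $\ub{x}(\ub{k})$ are arranged in the correct (decreasing-within-blocks, blocks in increasing $x_i$) order dictated by $\Omega_m$, and when applied to an element of $\Pi_\phi$, always strips segments from the $\GL$-side and never creates new type-$\rho$ data on the $G_0$-side beyond what is tracked — a statement one proves by induction on $\dim\phi$ using Theorem \ref{jac1} and Corollary \ref{cor.tdc}; or (ii) phrasing everything through the Hopf-algebra compatibility (Theorem \ref{tdc}, Lemma \ref{circ}) so that the leakage terms are automatically reorganized. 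I expect approach (i), combined with the triangularity from Lemma \ref{dim}, to be the cleanest, with the induction base case being $\phi \in \Phi_\disc(G)$ where $\mu^*_\rho$ is already essentially known from Xu's work.
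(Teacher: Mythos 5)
Your overall strategy --- expand $\mu^*_\rho(\pi)$ in a basis of the type-$\rho$ part of the $\GL$ Grothendieck group, hit it with the functionals $\Jac_{\rho|\cdot|^{\ub{y}}}$, and invert the resulting matrix --- matches the paper's, and the ``cleanest route'' you sketch at the end is essentially the paper's argument. Two points need correction.

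First, the ``leakage'' worry you flag is not an actual issue, so the delicate bookkeeping you anticipate can be dropped entirely. The identity you need is an instance of transitivity of Jacquet functors, not an identity about applying $\Jac$ to a tensor product over a Levi. Concretely: if $\semi\Jac_{P_{dm}}(\pi)=\sum_j\sigma_j\otimes\pi_j$, then for any $\ub{y}\in\R^m$ the iterated Jacquet module $\Jac_{\rho|\cdot|^{\ub{y}}}(\pi)$ equals $\sum_j\bigl(\dim_\C\Jac_{\rho|\cdot|^{\ub{y}}}(\sigma_j)\bigr)\,\pi_j$, because both sides compute the same isotypic component of $\semi\Jac_Q(\pi)$ for the parabolic $Q$ with Levi $\GL_d\times\dots\times\GL_d\times G_{n-dm}$. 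The $G_0$-factor has already been split off after the very first stage of the transitivity, so nothing can be ``pulled off'' it. You seem to be conflating this with the computation of $\Jac_{\rho|\cdot|^x}$ of an \emph{induced} representation $\tau\rtimes\pi_0$, where Tadi{\'c}'s formula does produce contributions from both factors; that is a different (and here irrelevant) computation.

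Second --- and this is the substantive gap --- you never establish that the $\GL$-side of $\mu^*_\rho(\pi)$ is supported only on $\Delta_{\ub{x}(\ub{k})}$ (equivalently $\tau_{\ub{x}(\ub{k})}$) with $\ub{k}\in K_\phi^{(m)}$, rather than on all of $\Omega_m$. Your ``cleanest route'' checks that the claimed formula reproduces $\Jac_{\rho|\cdot|^{\ub{x}(\ub{j})}}(\pi)$ for every $\ub{j}\in K_\phi^{(m)}$, but these finitely many functionals do not separate elements of the span of $\{\Delta_{\ub{x}}:\ub{x}\in\Omega_m\}$; they separate only once the support statement is already known, which is exactly what needs to be proved, so the argument is circular. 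The paper avoids this by first inverting the full triangular matrix over $\Omega_m$ (possible by Lemma~\ref{dim}), obtaining $\mu^*_\rho(\pi)=\sum_{\ub{x},\ub{y}\in\Omega_m}m'(\ub{x},\ub{y})\,\Delta_{\ub{x}}\otimes\Jac_{\rho|\cdot|^{\ub{y}}}(\pi)$, and only \emph{then} restricting to $K_\phi^{(m)}$ using two further ingredients: (i) Lemma~\ref{nec}, a separately proved inductive lemma showing $\Jac_{\rho|\cdot|^{\ub{y}}}(\pi)=0$ unless $\ub{y}=\ub{x}(\ub{k})$ for some $\ub{k}\in K_\phi^{(m)}$; and (ii) a combinatorial observation that the columns of $m'$ indexed by $K_\phi^{(m)}$ vanish outside rows indexed by $K_\phi^{(m)}$, which is what guarantees that the $K_\phi^{(m)}$-restriction of $m'$ really is the inverse of the $K_\phi^{(m)}$-submatrix of $m$ as claimed in the theorem. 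Your citation of Lemma~\ref{nonvanish} and Lemma~\ref{lem5}(1) gestures at (i) but does not prove it; Lemma~\ref{nec} is precisely the nontrivial induction that is missing from your proposal.
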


%We remark that the representation of $W_F \times \SL_2(\C)$ corresponding to $\tau_{\ub{x}(\ub{k})}$ is
%\[
%\phi_{\ub{x}(\ub{k})} = \bigoplus_{i=1}^t |\cdot|^{\half{a_i-k_i}} \rho \boxtimes S_{k_i}.
%\]
%\par

When we formally regard $(\Delta_{\ub{x}(\ub{k})})_{\ub{k} \in K_\phi^{(m)}}$ 
and $(\otimes \Jac_{\rho|\cdot|^{\ub{x}(\ub{l})}}(\pi))_{\ub{l} \in K_\phi^{(m)}}$ as column vectors, 
we have
\[
\sum_{\ub{k}, \ub{l} \in K_\phi^{(m)}} 
%\left(\prod_{(x,k) \in \{(x_i,k_i) | k_i \not= 0\}} \frac{1}{m_{(x,k)}!} \right)
m'_{\ub{k}, \ub{l}} \cdot
\Delta_{\ub{x}(\ub{k})}
\otimes 
\Jac_{\rho|\cdot|^{\ub{x}(\ub{l})}}(\pi) 
= {}^t (\Delta_{\ub{x}(\ub{k})}) \cdot (m_{\ub{k}, \ub{l}})^{-1}
\cdot (\otimes \Jac_{\rho|\cdot|^{\ub{x}(\ub{l})}}(\pi)).
\]
By Lemma \ref{dim}, $(m_{\ub{k}, \ub{l}})_{\ub{k}, \ub{l} \in K_\phi^{(m)}}$ is a ``triangular matrix'', 
which can be computed inductively.
Here, we regard $K_\phi^{(m)}$ as a totally ordered set with respect to the lexicographical order.
The diagonal entries $m_{\ub{k}, \ub{k}}$ are given in Lemma \ref{dim} explicitly.
\par

By Tadi{\'c}'s formula (Theorem \ref{tdc}), Lemma \ref{circ}, and Theorems \ref{jac1}, \ref{jac2}, 
we can deduce the following corollary.
\begin{cor}
We can compute $\mu^*(\pi)$ explicitly for any $\pi \in \Pi_\phi$ with $\phi \in \Phi_\gen(G)$.
\end{cor}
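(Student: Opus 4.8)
The plan is to reduce an arbitrary $\pi$ with generic parameter to the good-parity, tempered situation in two layers, applying the standard-module machinery and then Tadi\'c's formula. First I would handle the tempered case: for $\phi \in \Phi_\temp(G)$, write $\phi = \phi_\gp \oplus (\phi' \oplus \phi'^\vee)$ where $\phi_\gp$ collects the summands of the same type as $\phi$ and $\phi'$ is a sum of irreducibles not of that type, grouped by their supercuspidal support $\rho_j|\cdot|^{s}$. By Theorem \ref{LLC}(4) (applied iteratively) $\pi$ is a direct summand of $\tau' \rtimes \pi_\gp$ where $\tau'$ is an irreducible tempered representation of a general linear group attached to $\phi'$ and $\pi_\gp \in \Pi_{\phi_\gp}$ with $\phi_\gp \in \Phi_\gp(G)$. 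Then Theorem \ref{tdc} expresses $\mu^*(\tau'\rtimes\pi_\gp)$ as $M^*(\tau')\rtimes \mu^*(\pi_\gp)$; the factor $M^*(\tau')$ is computed entirely inside $\RR\otimes\RR$ by the Hopf-algebra structure (using Lemma \ref{GL} and the Zelevinsky computation of $m^*$ on each $\langle\rho;x,\dots,y\rangle$), and $\mu^*(\pi_\gp)$ is computed by Lemma \ref{circ} together with Theorem \ref{jac2}, where each $\Jac_{\rho|\cdot|^{\ub x}}(\pi_\gp)$ is computed by iterating Theorem \ref{jac1}. The only subtlety is extracting $\mu^*(\pi)$ from $\mu^*(\tau'\rtimes\pi_\gp)$ when $\tau'\rtimes\pi_\gp$ is reducible; but since $\tau'$ has support disjoint from $\phi_\gp$ and from the other $\rho_j$-blocks, the constituents of $\tau'\rtimes\pi_\gp$ are distinguished by their $A_\phi$-characters via Theorem \ref{LLC}(4), and one reads off the $\pi$-isotypic part of $M^*(\tau')\rtimes\mu^*(\pi_\gp)$ accordingly.

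Next I would treat a general generic $\phi \in \Phi_\gen(G)$. By Theorem \ref{LLC}(5) we may write
\[
\phi = \phi_1|\cdot|^{s_1} \oplus \dots \oplus \phi_r|\cdot|^{s_r} \oplus \phi_0 \oplus \phi_r^\vee|\cdot|^{-s_r} \oplus \dots \oplus \phi_1^\vee|\cdot|^{-s_1}
\]
with $\phi_0 \in \Phi_\temp(G_0)$ and $s_1\geq\dots\geq s_r>0$, and $\pi = \pi(\phi,\eta)$ is the unique irreducible quotient of the standard module $I(\phi,\eta) = \tau_1|\cdot|^{s_1}\times\dots\times\tau_r|\cdot|^{s_r}\rtimes\pi(\phi_0,\eta_0)$. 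Since $\phi$ is generic, Theorem \ref{smc} gives $I(\phi,\eta) = \pi(\phi,\eta)$, so $\mu^*(\pi) = \mu^*(I(\phi,\eta))$, and the right-hand side is computed by $r$ applications of Theorem \ref{tdc}: $\mu^*(I(\phi,\eta)) = M^*(\tau_1|\cdot|^{s_1})\rtimes\bigl(M^*(\tau_2|\cdot|^{s_2})\rtimes\bigl(\cdots\rtimes\mu^*(\pi(\phi_0,\eta_0))\bigr)\bigr)$. Each $M^*(\tau_i|\cdot|^{s_i})$ lives purely in $\RR\otimes\RR$ and is explicitly computable from Corollary \ref{cor.tdc} (or directly from the Hopf structure and Lemma \ref{GL}), and the innermost $\mu^*(\pi(\phi_0,\eta_0))$ is covered by the tempered case of the previous paragraph. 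Expanding the iterated $\rtimes$-products using the $\RR$-module structure on $\RR(G)$ yields a finite sum of terms $\sigma \otimes \sigma_0$ with $\sigma\in\Irr(\GL)$ and $\sigma_0\in\RR(G_0')$, all of which are explicitly named; this is the desired explicit formula for $\mu^*(\pi)$.

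For the main obstacle: the genuinely delicate point is the tempered-but-not-discrete case feeding into Theorem \ref{jac2}. Theorem \ref{jac2} requires knowing all iterated Jacquet modules $\Jac_{\rho|\cdot|^{\ub x(\ub l)}}(\pi)$, and these are governed by Theorem \ref{jac1}, whose cases (1) and (3) already produce \emph{linear combinations} (with signs and multiplicities) of standard modules rather than single irreducibles. Thus when one iterates $\Jac_{\rho|\cdot|^{x}}$ along a segment $\ub x(\ub l) = (x, x-1, \dots)$ of decreasing exponents, each step can split a single summand into several, and one must track bookkeeping of the resulting tree of $L$-parameters $\phi'$ and characters $\eta'$ — in particular keeping straight the canonical identifications $\AA_{\phi'}\hookrightarrow\AA_\phi$ and the central-element constraints, and ensuring the inversion of the triangular matrix $(m_{\ub k,\ub l})$ from Lemma \ref{dim} correctly disentangles the contributions. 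However, since every individual ingredient is effective (Theorem \ref{jac1} is fully explicit, the matrix $(m_{\ub k,\ub l})$ is triangular by Lemma \ref{dim} hence invertible over $\Q$, and Lemma \ref{circ} reduces $\mu^*$ to the $\mu^*_\rho$'s which are finitely many), the argument terminates and produces a finite, explicit answer. No new theorem is needed beyond assembling Theorems \ref{jac1}, \ref{jac2}, \ref{tdc}, \ref{smc} and Lemma \ref{circ} in the order above.
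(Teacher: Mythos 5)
Your reduction chain --- generic to tempered via Theorem \ref{smc} and iterated applications of Theorem \ref{tdc}, then tempered to good parity by peeling off the non-good-parity part, then Lemma \ref{circ}, Theorem \ref{jac2}, and Theorem \ref{jac1} for the good-parity core --- is exactly the route the paper intends (the paper gives no explicit proof, crediting precisely these four results in the preceding sentence). One small simplification worth noting: because $\phi'$ consists of summands that are not self-dual of the same type as $\phi$, one has $A_\phi = A_{\phi_\gp}$, so by Theorem \ref{LLC}(4) the induced representation $\tau' \rtimes \pi_\gp$ is already irreducible, and the ``extraction of the $\pi$-isotypic part'' you flagged as a potential subtlety never actually arises.
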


%\subsection{Examples}
\subsection{Examples}
We shall give some examples.

\begin{ex}\label{ab}
Fix two positive integers $a,b$ such that $a \equiv b \mod2$ and $a < b$, 
and consider 
\[
\phi = \rho \boxtimes (S_a \oplus S_b) \in \Phi_\disc(\SO_{d(a+b)+1}).
\]
Then $\Pi_\phi = \{\pi_+(a,b), \pi_-(a,b)\}$ with generic $\pi_+(a,b)$ and non-generic $\pi_-(a,b)$.
Note that both $\pi_+(a,b)$ and $\pi_-(a,b)$ are discrete series.
We compute $\mu^*(\pi_\epsilon(a,b))$ for $\epsilon \in \{\pm1\}$.
\par

Note that $K_\phi^{(m)} = \{(k_1,k_2) \in \Z^2\ |\ 0 \leq k_1 \leq a,\ 0 \leq k_2 \leq b, \ k_1+k_2 = m\}$.
%, 
%and $\prod_{(x,k)}m_{(x,k)}! = 1$ for any $\ub{k} = (k,l) \in K_\phi$.
For $(k_1,k_2) \in K_\phi^{(m)}$, 
%the irreducible representation 
%$\tau_{\ub{x}(k,l)}$ is the unique irreducible subrepresentation of 
\[
\Delta_{\ub{x}(k_1,k_1)} 
= \pair{\rho; \half{a-1}, \dots, \half{a+1}-k_1} \times \pair{\rho; \half{b-1}, \dots, \half{b+1}-k_2}.
\]
This induced representation is irreducible unless $(a+3)/2-k_1 \leq (b+1)/2-k_2 \leq (a+1)/2$, 
i.e., $(b-a)/2 \leq k_2 \leq (b-a)/2 + k_1-1$.
Moreover, one can easy to see that for $(l_1, l_2)$, the virtual representation 
\[
\sum_{(k_1,k_2) \in K_\phi^{(m)}} m'_{(k_1,k_2), (l_1, l_2)} \cdot \Delta_{\ub{x}(k_1,k_2)}
\]
is the unique irreducible subrepresentation $\tau_{\ub{x}(l_1,l_2)}$ of $\Delta_{\ub{x}(l_1,l_2)}$
(cf.~see \cite[Proposition 4.6]{Z}).
\par

Note that 
\[
\Jac_{\rho|\cdot|^{\half{a-1}}, \dots, \rho|\cdot|^{\half{a+1}-k_1}}(\pi_{\pm}(a,b)) 
= \left\{
\begin{aligned}
&\pi_{\epsilon}(a-2k_1, b) \iif k_1 \leq a/2, \\
&0 \other.
\end{aligned}
\right.
\] 
Here, when $k_1 = a/2$, 
we understand that $\pi_+(0,b)$ is the unique element in $\Pi_{\rho \boxtimes S_b}$, 
and $\pi_-(0,b) = 0$.
Moreover, for $(k_1, k_2) \in K_\phi^{(m)}$, 
when $k_1 \leq a/2$ and $k_2 \leq (b-a)/2+k_1$, we have
\[
\Jac_{\rho|\cdot|^{\ub{x}(k_1,k_2)}}(\pi_\epsilon(a,b)) = \pi_\epsilon(a-2k_1, b-2k_2).
\]
When $k_1 \leq a/2$ and $(b-a)/2+k_1+1 \leq k_2 \leq b/2$, we have
\begin{align*}
\Jac_{\rho|\cdot|^{\ub{x}(k_1,k_2)}}(\pi_\epsilon(a,b)) = 
&\pair{\rho; \half{a-1}-k_1, \dots, -\half{b-1}+k_2} \rtimes \1_{\SO_1(F)}
\\&+ \pi_\epsilon(b-2k_2, a-2k_1) - \pi_{-\epsilon}(b-2k_2, a-2k_1).
\end{align*}
When $k_1 \leq a/2$ and $b/2 < k_2 \leq (a+b)/2-k_1$, we have
\begin{align*}
\Jac_{\rho|\cdot|^{\ub{x}(k_1,k_2)}}(\pi_\epsilon(a,b)) = 
&\pair{\rho; \half{a-1}-k_1, \dots, -\half{b-1}+k_2} \rtimes \1_{\SO_1(F)}.
\end{align*}
In particular, if $k_1+k_2 = (a+b)/2$, 
then $\Jac_{\rho|\cdot|^{\ub{x}(k_1,k_2)}}(\pi_\epsilon(a,b)) = \1_{\SO_1(F)}$.
Hence $\semi\Jac_{P_{d(a+b)/2}}(\pi_\epsilon(a,b))$ contains the irreducible representation
\begin{align*}
&\pair{\rho; \half{a-1}, \dots, \half{a+1}-k_1} \times \pair{\rho; \half{b-1}, \dots, -\left(\half{a-1}-k_1\right)}
\otimes \1_{\SO_1(F)}
\end{align*}
with multiplicity one
if $k_1 < a/2$, or if $k_1 = a/2$ and $\epsilon = +1$.
\end{ex}

\begin{ex}\label{244}
Consider the $L$-parameter $\phi = S_2 \oplus S_4 \oplus S_4 \in \Phi_\gp(\SO_{11})$.
Then $\Pi_\phi$ has two elements $\pi_+(2,4,4)$ and $\pi_-(2,4,4)$ 
with generic $\pi_+(2,4,4)$ and non-generic $\pi_-(2,4,4)$.
Then 
\[
K^{(m)} = \{(k_1,k_2,k_3) \in \Z^3\ |\ 0 \leq k_1 \leq 2,\ 0 \leq k_3 \leq k_2 \leq 4, k_1+k_2+k_3 = m\}
\]
for $0 \leq m \leq 5$.
Write $\Pi_{\ub{x}(\ub{k})}^{\epsilon} = \Jac_{\rho|\cdot|^{\ub{x}(\ub{k})}}(\pi_\epsilon(2,4,4))$ 
for $\epsilon \in \{\pm1\}$, 
and $\St_a = \St(\1_{\GL_1(F)}, a)$.
We denote by ${\det}_a$ the determinant character of $\GL_a(F)$.

\begin{enumerate}
\item
When $m = 1$, we have $K_\phi^{(1)} = \{(1,0,0) > (0,1,0)\}$.
Since 
\[
\begin{pmatrix}
\Delta_{\ub{x}(1,0,0)} &
\Delta_{\ub{x}(0,1,0)}
\end{pmatrix}
= 
\begin{pmatrix}
|\cdot|^{\half{1}} & |\cdot|^{\half3}
\end{pmatrix},
\]
we have
\[
(m_{\ub{k}, \ub{k'}})_{\ub{k}, \ub{k'}}^{-1} = 
\begin{pmatrix}
1 & 0 \\ 0 & 1
\end{pmatrix}^{-1}
=
\begin{pmatrix}
1 & 0 \\ 0 & 1
\end{pmatrix}.
\]
Moreover
\[
\begin{pmatrix}
\Pi_{\ub{x}(1,0,0)}^\epsilon \\
\Pi_{\ub{x}(0,1,0)}^\epsilon
\end{pmatrix}
= 
\begin{pmatrix}
\pi_\epsilon(4,4) \\
|\cdot|^{\half{1}}\St_3 \rtimes \pi_+(2) + \epsilon \cdot \pi_+(2,2,4)
\end{pmatrix}.
\]
Hence
\begin{align*}
\semi\Jac_{P_1}(\pi_\epsilon(2,4,4)) 
&= 
|\cdot|^{\half{1}} \otimes \pi_\epsilon(4,4)
\\&+
|\cdot|^{\half{3}} \otimes 
\left(|\cdot|^{\half{1}}\St_3 \rtimes \pi_+(2) + \epsilon \cdot \pi_+(2,2,4)\right).
\end{align*}

\item
When $m = 2$, we have $K_\phi^{(2)} = \{(2,0,0) > (1,1,0) > (0,2,0) > (0,1,1)\}$.
Since 
\begin{align*}
&\begin{pmatrix}
\Delta_{\ub{x}(2,0,0)} &
\Delta_{\ub{x}(1,1,0)} &
\Delta_{\ub{x}(0,2,0)} &
\Delta_{\ub{x}(0,1,1)}
\end{pmatrix}
\\&= 
\begin{pmatrix}
\St_2 & |\cdot|^{\half{1}} \times |\cdot|^{\half{3}} & |\cdot|^1 \St_2 & |\cdot|^{\half{3}} \times |\cdot|^{\half{3}}
\end{pmatrix},
\end{align*}
we have
\[
(m_{\ub{k}, \ub{k'}})_{\ub{k}, \ub{k'}}^{-1} = 
\begin{pmatrix}
1 & 0 & 0 & 0 \\ 
0 & 1 & 0 & 0 \\
0 & 1 & 1 & 0 \\
0 & 0 & 0 & 2
\end{pmatrix}^{-1}
=
\begin{pmatrix}
1 & 0 & 0 & 0 \\ 
0 & 1 & 0 & 0 \\
0 & -1 & 1 & 0 \\
0 & 0 & 0 & \half{1}
\end{pmatrix}.
\]
Moreover
\[
\begin{pmatrix}
\Pi_{\ub{x}(2,0,0)}^\epsilon \\
\Pi_{\ub{x}(1,1,0)}^\epsilon \\
\Pi_{\ub{x}(0,2,0)}^\epsilon \\
\\
\Pi_{\ub{x}(0,1,1)}^\epsilon 
\end{pmatrix}
= 
\begin{pmatrix}
0 \\
|\cdot|^{\half{1}} \St_3 \rtimes \1_{\SO_1(F)} + \pi_\epsilon(2,4) - \pi_{-\epsilon}(2,4) \\
|\cdot|^1 \St_2 \rtimes \pi_+(2) + |\cdot|^{\half{1}} \St_3 \rtimes \1_{\SO_1(F)} \\
+ \epsilon \left( |\cdot|^{\half{1}} \rtimes \pi_+(4) + \pi_+(2,4) \right)\\
(1+\epsilon) \cdot \pi_+(2,2,2)
\end{pmatrix}.
\]
Hence
\begin{align*}
\semi\Jac_{P_2}(\pi_\epsilon(2,4,4)) 
&= 
|{\det}_2|^{\half{1}} \otimes 
\left(
|\cdot|^{\half{1}} \St_3 \rtimes \1_{\SO_1(F)} + \pi_\epsilon(2,4) - \pi_{-\epsilon}(2,4)
\right)
\\&+
|\cdot|^{\half{1}}\St_2 \otimes 
\Big(
|\cdot|^1 \St_2 \rtimes \pi_+(2) + |\cdot|^{\half{1}} \St_3 \rtimes \1_{\SO_1(F)} 
\\&
\phantom{|\cdot|^{\half{1}}\St_2 \otimes \otimes}
+ 
\epsilon \left( |\cdot|^{\half{1}} \rtimes \pi_+(4) + \pi_+(2,4) \right)
\Big)\\
\\&+ 
\left( |\cdot|^{\half{3}} \times |\cdot|^{\half{3}} \right) \otimes
\half{1+\epsilon} \cdot \pi_+(2,2,2).
\end{align*}

\item
When $m = 3$, we have $K_\phi^{(3)} = \{(2,1,0) > (1,2,0) > (1,1,1) > (0,3,0) > (0,2,1)\}$.
Since 
\begin{align*}
&\begin{pmatrix}
\Delta_{\ub{x}(2,1,0)} &
\Delta_{\ub{x}(1,2,0)} &
\Delta_{\ub{x}(1,1,1)} &
\Delta_{\ub{x}(0,3,0)} &
\Delta_{\ub{x}(0,2,1)}
\end{pmatrix}
\\&= 
\begin{pmatrix}
\St_2 \times |\cdot|^{\half{3}} & 
|\cdot|^{\half{1}} \times |\cdot|^{1}\St_2 & 
|\cdot|^{\half{1}} \times |\cdot|^{\half{3}} \times |\cdot|^{\half{3}} &
|\cdot|^{\half{1}} \St_3 & 
|\cdot|^{1}\St_2 \times |\cdot|^{\half{3}}
\end{pmatrix},
\end{align*}
we have
\[
(m_{\ub{k}, \ub{k'}})_{\ub{k}, \ub{k'}}^{-1} = 
\begin{pmatrix}
1 & 0 & 0 & 0 & 0 \\ 
0 & 1 & 0 & 0 & 0\\
0 & 0 & 2 & 0 & 0\\
1 & 0 & 0 & 1 & 0 \\
0 & 0 & 2 & 0 & 1
\end{pmatrix}^{-1}
=
\begin{pmatrix}
1 & 0 & 0 & 0 & 0 \\ 
0 & 1 & 0 & 0 & 0\\
0 & 0 & \half{1} & 0 & 0\\
-1 & 0 & 0 & 1 & 0 \\
0 & 0 & -1 & 0 & 1\end{pmatrix}.
\]
Moreover
\[
\begin{pmatrix}
\Pi_{\ub{x}(2,1,0)}^\epsilon \\
\Pi_{\ub{x}(1,2,0)}^\epsilon \\
\Pi_{\ub{x}(1,1,1)}^\epsilon \\
\Pi_{\ub{x}(0,3,0)}^\epsilon \\
\Pi_{\ub{x}(0,2,1)}^\epsilon 
\end{pmatrix}
= 
\begin{pmatrix}
0 \\
|\cdot|^1 \St_2 \rtimes \1_{\SO_1(F)} + \epsilon \cdot \pi_+(4) \\
2 \cdot \pi_\epsilon(2,2) \\
|\cdot|^{\half{3}} \rtimes \pi_+(2) + \epsilon\cdot \pi_+(4) \\
(1+\epsilon) \left( |\cdot|^{\half{1}} \rtimes \pi_+(2) + \pi_+(2,2) \right) + \pi_-(2,2)
\end{pmatrix}.
\]
Hence
\begin{align*}
\semi\Jac_{P_3}(\pi_\epsilon(2,4,4)) 
&= 
\left( |\cdot|^{\half{1}} \times |\cdot|^{1}\St_2 \right) \otimes 
\left(
|\cdot|^1 \St_2 \rtimes \1_{\SO_1(F)} + \epsilon \cdot \pi_+(4)
\right)
\\&+
\left(|{\det}_2|^{1} \times |\cdot|^{\half{3}} \right) \otimes 
\pi_\epsilon(2,2)
\\&+ 
|\cdot|^{\half{1}} \St_3 \otimes 
\left(
|\cdot|^{\half{3}} \rtimes \pi_+(2) + \epsilon\cdot \pi_+(4) 
\right)
\\&+
\left( |\cdot|^{1}\St_2 \times |\cdot|^{\half{3}} \right) \otimes
\left(
(1+\epsilon) |\cdot|^{\half{1}} \rtimes \pi_+(2) 
+\pi_+(2,2)
+ \epsilon \cdot \pi_{-\epsilon}(2,2)
\right).
\end{align*}

\item
When $m = 4$, we have $K_\phi^{(4)} = \{(2,2,0) > (2,1,1) > (1,3,0) > (1,2,1) > (0,4,0) > (0,3,1) > (0,2,2)\}$.
Since 
\begin{align*}
&\begin{pmatrix}
\Delta_{\ub{x}(2,2,0)} \\
\Delta_{\ub{x}(2,1,1)} \\
\Delta_{\ub{x}(1,3,0)} \\
\Delta_{\ub{x}(1,2,1)} \\
\Delta_{\ub{x}(0,4,0)} \\
\Delta_{\ub{x}(0,3,1)} \\
\Delta_{\ub{x}(0,2,2)} 
\end{pmatrix}
= 
\begin{pmatrix}
\St_2 \times |\cdot|^1 \St_2 \\
\St_2 \times |\cdot|^{\half{3}} \times |\cdot|^{\half{3}} \\
|\cdot|^{\half{1}} \times |\cdot|^{\half{1}} \St_3 \\
|\cdot|^{\half{1}} \times |\cdot|^{1} \St_2 \times |\cdot|^{\half{3}} \\
\St_4 \\
|\cdot|^{\half{1}}\St_3 \times |\cdot|^{\half{3}} \\
|\cdot|^1 \St_2 \times |\cdot|^1 \St_2 
\end{pmatrix},
\end{align*}
we have
\[
(m_{\ub{k}, \ub{k'}})_{\ub{k}, \ub{k'}}^{-1} = 
\begin{pmatrix}
1 & 0 & 0 & 0 & 0 & 0 & 0 \\ 
0 & 2 & 0 & 0 & 0 & 0 & 0 \\ 
1 & 0 & 1 & 0 & 0 & 0 & 0 \\ 
0 & 0 & 0 & 1 & 0 & 0 & 0 \\ 
0 & 0 & 0 & 0 & 1 & 0 & 0 \\ 
0 & 2 & 0 & 0 & 0 & 1 & 0 \\ 
0 & 0 & 0 & 3 & 0 & 0 & 2
\end{pmatrix}^{-1}
=
\begin{pmatrix}
1 & 0 & 0 & 0 & 0 & 0 & 0 \\ 
0 & \half{1} & 0 & 0 & 0 & 0 & 0 \\ 
-1 & 0 & 1 & 0 & 0 & 0 & 0 \\ 
0 & 0 & 0 & 1 & 0 & 0 & 0 \\ 
0 & 0 & 0 & 0 & 1 & 0 & 0 \\ 
0 & -1 & 0 & 0 & 0 & 1 & 0 \\ 
0 & 0 & 0 & -\half{3} & 0 & 0 & \half{1}
\end{pmatrix}.
\]
Moreover
\[
\begin{pmatrix}
\Pi_{\ub{x}(2,2,0)}^\epsilon \\
\Pi_{\ub{x}(2,1,1)}^\epsilon \\
\Pi_{\ub{x}(1,3,0)}^\epsilon \\
\Pi_{\ub{x}(1,2,1)}^\epsilon \\
\Pi_{\ub{x}(0,4,0)}^\epsilon \\
\Pi_{\ub{x}(0,3,1)}^\epsilon \\
\Pi_{\ub{x}(0,2,2)}^\epsilon 
\end{pmatrix}
= 
\begin{pmatrix}
0 \\
0 \\
|\cdot|^{\half{3}} \rtimes \1_{\SO_1(F)} \\
|\cdot|^{\half{1}} \rtimes \1_{\SO_1(F)} + \epsilon \cdot \pi_+(2) \\
\pi_+(2) \\
(1+\epsilon) \cdot \pi_+(2) \\
(3+2\epsilon) |\cdot|^{\half{1}} \rtimes \1_{\SO_1(F)} + (1+2\epsilon) \cdot \pi_+(2)
\end{pmatrix}.
\]
Hence
\begin{align*}
\semi\Jac_{P_4}(\pi_\epsilon(2,4,4)) 
&= 
\left( |\cdot|^{\half{1}} \times |\cdot|^{\half{1}} \St_3 \right) \otimes |\cdot|^{\half{3}} \rtimes \1_{\SO_1(F)}
\\&+
\left( |\cdot|^1 \St_2 \times |{\det}_2|^1 \right) \otimes 
\left(
|\cdot|^{\half{1}} \rtimes \1_{\SO_1(F)} + \epsilon \cdot \pi_+(2)
\right)
\\&+
\St_4 \otimes \pi_+(2)
\\&+ 
\left( |\cdot|^{\half{1}}\St_3 \times |\cdot|^{\half{3}} \right) \otimes
(1+\epsilon) \cdot \pi_+(2)
\\&+ 
\left( |\cdot|^1 \St_2 \times |\cdot|^1 \St_2 \right) \otimes
\left(
(1+\epsilon) |\cdot|^{\half{1}} \rtimes \1_{\SO_1(F)} + \half{1+\epsilon} \cdot \pi_+(2)
\right).
\end{align*}

\item
When $m = 5$, we have $K_\phi^{(5)} = \{(2,3,0) > (2,2,1) > (1,4,0) > (1,3,1) > (1,2,2) > (0,4,1) > (0,3,2) \}$.
Since 
\begin{align*}
&\begin{pmatrix}
\Delta_{\ub{x}(2,3,0)} \\
\Delta_{\ub{x}(2,2,1)} \\
\Delta_{\ub{x}(1,4,0)} \\
\Delta_{\ub{x}(1,3,1)} \\
\Delta_{\ub{x}(1,2,2)} \\
\Delta_{\ub{x}(0,4,1)} \\
\Delta_{\ub{x}(0,3,2)} 
\end{pmatrix}
= 
\begin{pmatrix}
\St_2 \times |\cdot|^{\half{1}} \St_3 \\
\St_2 \times |\cdot|^{1}\St_2 \times |\cdot|^{\half{3}} \\
|\cdot|^{\half{1}} \times \St_4 \\
|\cdot|^{\half{1}} \times |\cdot|^{\half{1}} \St_3 \times |\cdot|^{\half{3}} \\
|\cdot|^{\half{1}} \times |\cdot|^1 \St_2 \times |\cdot|^1 \St_2 \\
\St_4 \times |\cdot|^{\half{3}} \\
|\cdot|^{\half{1}}\St_3 \times |\cdot|^{1}\St_2 
\end{pmatrix},
\end{align*}
we have
\[
(m_{\ub{k}, \ub{k'}})_{\ub{k}, \ub{k'}}^{-1} = 
\begin{pmatrix}
1 & 0 & 0 & 0 & 0 & 0 & 0 \\ 
0 & 1 & 0 & 0 & 0 & 0 & 0 \\ 
0 & 0 & 1 & 0 & 0 & 0 & 0 \\ 
0 & 1 & 0 & 1 & 0 & 0 & 0 \\ 
0 & 0 & 0 & 0 & 2 & 0 & 0 \\ 
0 & 0 & 0 & 0 & 0 & 1 & 0 \\ 
0 & 2 & 0 & 1 & 0 & 0 & 1
\end{pmatrix}^{-1}
=
\begin{pmatrix}
1 & 0 & 0 & 0 & 0 & 0 & 0 \\ 
0 & 1 & 0 & 0 & 0 & 0 & 0 \\ 
0 & 0 & 1 & 0 & 0 & 0 & 0 \\ 
0 & -1 & 0 & 1 & 0 & 0 & 0 \\ 
0 & 0 & 0 & 0 & \half{1} & 0 & 0 \\ 
0 & 0 & 0 & 0 & 0 & 1 & 0 \\ 
0 & -1 & 0 & -1 & 0 & 0 & 1
\end{pmatrix}.
\]
Moreover
\[
\begin{pmatrix}
\Pi_{\ub{x}(2,3,0)}^\epsilon \\
\Pi_{\ub{x}(2,2,1)}^\epsilon \\
\Pi_{\ub{x}(1,4,0)}^\epsilon \\
\Pi_{\ub{x}(1,3,1)}^\epsilon \\
\Pi_{\ub{x}(1,2,2)}^\epsilon \\
\Pi_{\ub{x}(0,4,1)}^\epsilon \\
\Pi_{\ub{x}(0,3,2)}^\epsilon 
\end{pmatrix}
= 
\begin{pmatrix}
0 \\
0 \\
\1_{\SO_1(F)} \\
\1_{\SO_1(F)} \\
(1+\epsilon) \cdot \1_{\SO_1(F)} \\
0 \\
(1+\epsilon) \cdot \1_{\SO_1(F)}
\end{pmatrix}.
\]
Hence
\begin{align*}
\semi\Jac_{P_5}(\pi_\epsilon(2,4,4)) 
&= 
\left( |\cdot|^{\half{1}} \times \St_4  \right) \otimes \1_{\SO_1(F)}
\\&+
\left(|\cdot|^{\half{1}} \St_3 \times |{\det}_2|^1 \right) \otimes \1_{\SO_1(F)}
\\&+
\left( |\cdot|^{\half{1}} \times |\cdot|^1 \St_2 \times |\cdot|^1 \St_2 \right) \otimes
\half{1+\epsilon} \cdot \1_{\SO_1(F)}
\\&+
\left( |\cdot|^{\half{1}}\St_3 \times |\cdot|^{1}\St_2 \right) \otimes
(1+\epsilon) \cdot \1_{\SO_1(F)}.
\end{align*}

\end{enumerate}
\end{ex}

\if()%%%%%%%%%55
\begin{ex}\label{3333}
Consider the $L$-parameter $\phi = (\rho \boxtimes S_3)^{\oplus 4} \in \Phi_\gp(\SO_{12d+1})$.
Then $\Pi_\phi$ has two elements $\pi_+$ and $\pi_-$, 
and 
\[
K_\phi^{(m)} = \{\ub{k} = (k_1, k_2, k_3, k_4)\ |\ 0 \leq k_4 \leq \dots \leq k_1 \leq 3,\ k_1+\dots+k_4 = m\}.
\]
%Note that $\# K_\phi = 20$.
Write $\Pi_{\ub{x}(\ub{k})}^{\epsilon} = \Jac_{\rho|\cdot|^{\ub{x}(\ub{k})}}(\pi_\epsilon)$ 
for $\epsilon \in \{\pm1\}$.
%Then $\tau_{\ub{x}(\ub{k})}$, $\prod_{(x,k)}m_{(x,k)}$ and $\Pi_{\ub{x}(\ub{k})}^{\epsilon}$ are given as follows.
\begin{enumerate}
\setcounter{enumi}{-1}
\item 
When $m = 0$, we have $K_\phi^{(0)} = \{(0,0,0,0)\}$ and $(m_{\ub{k}, \ub{k'}}) = (1)$.
Moreover: 
\begin{itemize}
\item
If $\ub{k} = (0,0,0,0)$, 
then $\Delta_{\ub{x}(0,0,0,0)} = \1_{\GL_0(F)}$ and 
\[
\Pi_{\ub{x}(0,0,0,0)}^{\epsilon} = \pi_\epsilon.
\]
\end{itemize}
Hence
\[
\semi\Jac_{P_0}(\pi_\epsilon) = \1_{\GL_0(F)} \otimes \pi_\epsilon.
\]

\item
When $m = 1$, we have $K_\phi^{(1)} = \{(1,0,0,0)\}$ and $(m_{\ub{k}, \ub{k'}}) = (1)$.
Moreover: 
\begin{itemize}
\item
If $\ub{k} = (1,0,0,0)$, 
then $\Delta_{\ub{x}(1,0,0,0)} = \rho|\cdot|^1$ and 
\begin{align*}
\Pi_{\ub{x}(1,0,0,0)}^{\epsilon} 
= 
&3 \cdot \pair{\rho; 1,0} \rtimes \pi_{\epsilon}(3,3) + \pair{\rho; 1,0} \rtimes \pi_{-\epsilon}(3,3)
\\&+ 
\St(\rho, 3) \rtimes \pi_{\epsilon}(1,3) - \St(\rho, 3) \rtimes \pi_{-\epsilon}(1,3).
\end{align*}
\end{itemize}
Hence
\[
\semi\Jac_{P_d}(\pi_\epsilon) = \rho|\cdot|^1 \otimes \Pi_{\ub{x}(1,0,0,0)}^{\epsilon}.
\]

\item
When $m = 2$, we have $K_\phi^{(2)} = \{(2,0,0,0) \geq (1,1,0,0)\}$ and 
$(m_{\ub{k}, \ub{k'}}) = \diag(1,2)$.
Moreover: 
\begin{itemize}
\item
If $\ub{k} = (2,0,0,0)$, 
then $\Delta_{\ub{x}(2,0,0,0)} = \pair{\rho; 1,0}$ and 
\begin{align*}
\Pi_{\ub{x}(2,0,0,0)}^{\epsilon} 
= 
&3 \cdot \rho|\cdot|^1 \rtimes \pi_{\epsilon}(3,3) + \rho|\cdot|^1 \rtimes \pi_{-\epsilon}(3,3).
\end{align*}

\item
If $\ub{k} = (1,1,0,0)$, 
then $\Delta_{\ub{x}(1,1,0,0)} = \rho|\cdot|^1 \times \rho|\cdot|^1$ and
\begin{align*}
\Pi_{\ub{x}(1,1,0,0)}^{\epsilon} 
= 2 
&\Big(
2 \cdot \pair{\rho; 1,0} \times \pair{\rho; 1,0} \rtimes \1
\\&
+ 2 \cdot \pair{\rho; 1,0} \rtimes \pi_{\epsilon}(3,3)
- 2 \cdot \pair{\rho; 1,0} \rtimes \pi_{-\epsilon}(3,3)
\\&+ 
2 \cdot \pi_{(\epsilon, \epsilon)}(1,1,3,3)
+ \pi_{(\epsilon, -\epsilon)}(1,1,3,3) + \pi_{(-\epsilon, \epsilon)}(1,1,3,3)
\Big), 
\end{align*}
where $\pi_{(\epsilon_1, \epsilon_3)}(1,1,3,3) 
= \pi((\rho \boxtimes S_1)^{\oplus2} \oplus (\rho \boxtimes S_3)^{\oplus2}, \eta_{(\epsilon_1, \epsilon_3)})$
with $\eta_{(\epsilon_1, \epsilon_3)}(\alpha_{\rho \boxtimes S_i}) = \epsilon_i$.
\end{itemize}
Hence
\begin{align*}
\semi\Jac_{P_{2d}}(\pi_\epsilon) 
&= 
\pair{\rho; 1,0} \otimes \Pi_{\ub{x}(2,0,0,0)}^{\epsilon}
\\&+ 
\half{1}(\rho|\cdot|^1 \times \rho|\cdot|^1) \otimes \Pi_{\ub{x}(1,1,0,0)}^{\epsilon}.
\end{align*}

\item
When $m = 3$, we have $K_\phi^{(3)} = \{(3,0,0,0) \geq (2,1,0,0) \geq (1,1,1,0)\}$ and 
$(m_{\ub{k}, \ub{k'}}) = \diag(1,1,6)$.
Moreover: 
\begin{itemize}
\item
If $\ub{k} = (3,0,0,0)$, 
then $\Delta_{\ub{x}(3,0,0,0)} = \St(\rho,3)$ and 
\begin{align*}
\Pi_{\ub{x}(3,0,0,0)}^{\epsilon} 
= 
&3 \cdot \pi_{\epsilon}(3,3) + \pi_{-\epsilon}(3,3).
\end{align*}

\item
If $\ub{k} = (2,1,0,0)$, 
then $\Delta_{\ub{x}(2,1,0,0)} = \pair{\rho; 1,0} \times \rho|\cdot|^1$ and 
\begin{align*}
\Pi_{\ub{x}(2,1,0,0)}^{\epsilon} 
= 
&4 \cdot \rho|\cdot|^1 \times \pair{\rho; 1,0} \rtimes \1
+ 2 \cdot \rho|\cdot|^1 \rtimes \pi_{\epsilon}(3,1) - 2 \cdot \rho|\cdot|^1 \rtimes \pi_{-\epsilon}(3,1)
\\&+ 3 \cdot \pi_{\epsilon}(3,3) + \pi_{-\epsilon}(3,3).
\end{align*}

\item
If $\ub{k} = (1,1,1,0)$, 
then $\Delta_{\ub{x}(1,1,1,0)} = \rho|\cdot|^1 \times \rho|\cdot|^1 \times \rho|\cdot|^1$ and 
\begin{align*}
\Pi_{\ub{x}(\ub{k})}^{\epsilon} 
= 6 
&\Big(
3 \cdot \pair{\rho; 1,0} \rtimes \pi_{\epsilon}(1,1)
+ \pair{\rho; 1,0} \rtimes \pi_{-\epsilon}(1,1)
\\&+ 
\rho \rtimes \pi_{\epsilon}(1,3) 
- \rho \rtimes \pi_{-\epsilon}(1,3)
\Big).
\end{align*}
\end{itemize}
Hence
\begin{align*}
\semi\Jac_{P_{3d}}(\pi_\epsilon) 
&= 
\St(\rho,3) \otimes \Pi_{\ub{x}(3,0,0,0)}^{\epsilon}
\\&+ (\pair{\rho; 1,0} \times \rho|\cdot|^1) \otimes \Pi_{\ub{x}(2,1,0,0)}^{\epsilon}
\\&+ \frac{1}{6}(\rho|\cdot|^1 \times \rho|\cdot|^1 \times \rho|\cdot|^1) \otimes \Pi_{\ub{x}(1,1,1,0)}^{\epsilon}.
\end{align*}

\item
When $m = 4$, we have $K_\phi^{(4)} = \{(3,1,0,0) \geq (2,2,0,0) \geq (2,1,1,0) \geq (1,1,1,1)\}$ 
and $(m_{\ub{k}, \ub{k'}})= \diag(1,2,2,24)$.
Moreover: 
\begin{itemize}
\item
If $\ub{k} = (3,1,0,0)$, 
then $\Delta_{\ub{x}(3,1,0,0)} = \St(\rho,3) \times \rho|\cdot|^1$ and 
\begin{align*}
\Pi_{\ub{x}(3,1,0,0)}^{\epsilon} 
= 
&4 \cdot \pair{\rho; 1,0} \rtimes \1
+ 2 \cdot \pi_{\epsilon}(3,1) - 2 \cdot \pi_{-\epsilon}(3,1).
\end{align*}

\item
If $\ub{k} = (2,2,0,0)$, then $\Delta_{\ub{x}(2,2,0,0)} = \pair{\rho; 1,0} \times \pair{\rho; 1,0}$ and 
\begin{align*}
\Pi_{\ub{x}(2,2,0,0)}^{\epsilon} 
= 
&4 \cdot \pair{\rho; 1,0} \times \pair{\rho; 1,0} \rtimes \1.
\end{align*}

\item
If $\ub{k} = (2,1,1,0)$, 
then $\tau_{\ub{x}(2,1,1,0)} = \pair{\rho; 1,0} \times \rho|\cdot|^1 \times \rho|\cdot|^1$ and 
\begin{align*}
\Pi_{\ub{x}(2,1,1,0)}^{\epsilon} 
= 2 
&\Big(
4 \cdot \pair{\rho; 1,0} \rtimes \1
+ 3 \cdot \rho|\cdot|^1 \rtimes \pi_{\epsilon}(1,1)
+\rho|\cdot|^1 \rtimes \pi_{-\epsilon}(1,1)
\\&+ 
2 \cdot \pi_{\epsilon}(1,3) 
- 2 \cdot \pi_{-\epsilon}(1,3)
\Big).
\end{align*}

\item
If $\ub{k} = (1,1,1,1)$, 
then $\tau_{\ub{x}(1,1,1,1)} = \rho|\cdot|^1 \times \rho|\cdot|^1 \times \rho|\cdot|^1 \times \rho|\cdot|^1$ and 
\begin{align*}
\Pi_{\ub{x}(\ub{k})}^{\epsilon} 
= 24 \cdot \rho \rtimes \pi_{\epsilon}(1,1). 
\end{align*}
\end{itemize}
Hence
\begin{align*}
\semi\Jac_{P_{4d}}(\pi_\epsilon) 
&= 
(\St(\rho,3) \times \rho|\cdot|^1) \otimes \Pi_{\ub{x}(3,1,0,0)}^{\epsilon}
\\&+ \frac{1}{2}(\pair{\rho; 1,0} \times \pair{\rho; 1,0}) \otimes \Pi_{\ub{x}(2,2,0,0)}^{\epsilon}
\\&+ \frac{1}{2}(\pair{\rho; 1,0} \times \rho|\cdot|^1 \times \rho|\cdot|^1) \otimes \Pi_{\ub{x}(2,1,1,0)}^{\epsilon}
\\&+ \frac{1}{24}(\rho|\cdot|^1 \times \rho|\cdot|^1 \times \rho|\cdot|^1 \times \rho|\cdot|^1) 
\otimes \Pi_{\ub{x}(1,1,1,1)}^{\epsilon}.
\end{align*}

\item
When $m = 5$, we have $K_\phi^{(5)} = \{(3,2,0,0) \geq (3,1,1,0) \geq (2,2,1,0) \geq (2,1,1,1)\}$ 
and $(m_{\ub{k}, \ub{k'}}) = \diag(1,2,2,6)$.
Moreover: 
\begin{itemize}
\item
If $\ub{k} = (3,2,0,0)$, then $\Delta_{\ub{x}(3,2,0,0)} = \St(\rho,3) \times \pair{\rho; 1,0}$ and
\begin{align*}
\Pi_{\ub{x}(3,2,0,0)}^{\epsilon} 
= 
&4 \cdot \rho|\cdot|^1 \rtimes \1_{\SO_1(F)}.
\end{align*}

\item
If $\ub{k} = (3,1,1,0)$, 
then $\Delta_{\ub{x}(3,1,1,0)} = \St(\rho,3) \times \rho|\cdot|^1 \times \rho|\cdot|^1$ and 
\begin{align*}
\Pi_{\ub{x}(3,1,1,0)}^{\epsilon} 
= 2 
&\Big(
3 \cdot \pi_{\epsilon}(1,1) + \pi_{-\epsilon}(1,1)
\Big).
\end{align*}

\item
If $\ub{k} = (2,2,1,0)$, 
then $\Delta_{\ub{x}(2,2,1,0)} = \pair{\rho; 1,0} \times \pair{\rho; 1,0} \times \rho|\cdot|^1$, and 
\begin{align*}
\Pi_{\ub{x}(\ub{k})}^{\epsilon} 
= 
&8 \cdot \rho|\cdot|^1 \rtimes \1_{\SO_1(F)}.
\end{align*}

\item
If $\ub{k} = (2,1,1,1)$, 
then $\tau_{\ub{x}(2,1,1,1)} = \pair{\rho; 1,0} \times \rho|\cdot|^1 \times \rho|\cdot|^1 \times \rho|\cdot|^1$ and 
\begin{align*}
\Pi_{\ub{x}(2,1,1,1)}^{\epsilon} 
= 6 
&\Big(
3 \cdot \pi_{\epsilon}(1,1) + \pi_{-\epsilon}(1,1)
\Big).
\end{align*}
\end{itemize}
Hence
\begin{align*}
\semi\Jac_{P_{5d}}(\pi_\epsilon) 
&= 
(\St(\rho,3) \times \pair{\rho; 1,0}) \otimes \Pi_{\ub{x}(3,2,0,0)}^{\epsilon}
\\&+ \frac{1}{2}(\St(\rho,3) \times \rho|\cdot|^1 \times \rho|\cdot|^1) \otimes \Pi_{\ub{x}(3,1,1,0)}^{\epsilon}
\\&+ \frac{1}{2}(\pair{\rho; 1,0} \times \pair{\rho; 1,0} \times \rho|\cdot|^1) 
\otimes \Pi_{\ub{x}(2,2,1,0)}^{\epsilon}
\\&+ \frac{1}{6}(\pair{\rho; 1,0} \times \rho|\cdot|^1 \times \rho|\cdot|^1 \times \rho|\cdot|^1) 
\otimes \Pi_{\ub{x}(2,1,1,1)}^{\epsilon}.
\end{align*}

\item
When $m = 6$, 
we have $K_\phi^{(6)} = \{(3,3,0,0) \geq (3,2,1,0) \geq (3,1,1,1) \geq (2,2,2,0) \geq (2,2,1,1)\}$ 
and $(m_{\ub{k}, \ub{k'}}) = \diag(2,1,6,6,4)$.
Moreover: 
\begin{itemize}
\item
If $\ub{k} = (3,3,0,0)$, 
then $\Delta_{\ub{x}(3,3,0,0)} = \St(\rho,3) \times \St(\rho,3)$ and
\begin{align*}
\Pi_{\ub{x}(3,3,0,0)}^{\epsilon} 
= 
&4 \cdot \1_{\SO_1(F)}.
\end{align*}

\item
If $\ub{k} = (3,2,1,0)$, 
then $\Delta_{\ub{x}(3,2,1,0)} = \St(\rho,3) \times \pair{\rho; 1,0} \times \rho|\cdot|^1$ and
\begin{align*}
\Pi_{\ub{x}(3,2,1,0)}^{\epsilon} 
= 
&4 \cdot \1_{\SO_1(F)}.
\end{align*}

\item
If $\ub{k} = (3,1,1,1)$, then $\Pi_{\ub{x}(3,1,1,1)}^{\epsilon} = 0$.

\item
If $\ub{k} = (2,2,2,0)$, then $\Pi_{\ub{x}(2,2,2,0)}^{\epsilon} = 0$.

\item
If $\ub{k} = (2,2,1,1)$, 
then $\Delta_{\ub{x}(2,2,1,1)} = \pair{\rho; 1,0} \times \pair{\rho; 1,0} \times \rho|\cdot|^1 \times \rho|\cdot|^1$, 
and 
\begin{align*}
\Pi_{\ub{x}(2,2,1,1)}^{\epsilon} 
= 
&8 \cdot \1_{\SO_1(F)}.
\end{align*}
\end{itemize}
Hence
\begin{align*}
\semi\Jac_{P_{6d}}(\pi_\epsilon) 
&= 
2(\St(\rho,3) \times \St(\rho, 3)) \otimes \1_{\SO_1(F)}
\\&+ 4(\St(\rho,3) \times \pair{\rho; 1,0} \times \rho|\cdot|^1) \otimes \1_{\SO_1(F)}
\\&+ 2(\pair{\rho; 1,0} \times \pair{\rho; 1,0} \times \rho|\cdot|^1 \times \rho|\cdot|^1) \otimes \1_{\SO_1(F)}.
\end{align*}
\end{enumerate}
%Using Tadi{\'c}'s formula (Theorem \ref{tdc}), one can check that
%\[
%\sum_{\ub{k} \in K_\phi} 
%\left(\prod_{(x,k) \in \{(x_i,k_i) | k_i \not= 0\}} \frac{1}{m_{(x,k)}!} \right)
%\tau_{\ub{x}(\ub{k})}
%\otimes (\Pi_{\ub{x}(\ub{k})}^{+} + \Pi_{\ub{x}(\ub{k})}^{-} )
%= 
%\mu^*(\St(\rho, 3) \times \St(\rho,3) \rtimes \1).
%\]
\end{ex}
\fi%%%%%%%%%%%%%%%%%%

\begin{rem}
In Theorem \ref{jac2}, 
one can replace $\Delta_{\ub{x}(\ub{k})}$ with its unique irreducible subrepresentation $\tau_{\ub{x}(\ub{k})}$.
Then one should consider the matrix 
$(M_{\ub{k}, \ub{l}}) = (\dim_\C \Jac_{\rho|\cdot|^{\ub{x}(\ub{k})}}(\tau_{\ub{x}(\ub{l})}))$.
One might seem that $(M_{\ub{k}, \ub{l}})$ easier than $(m_{\ub{k}, \ub{l}})$.
For instance, if $\phi$ is in Example \ref{ab}, 
all $(M_{\ub{k}, \ub{l}})$ are the identity matrix, but not so is some $(m_{\ub{k}, \ub{l}})$.
However, in general, $(M_{\ub{k}, \ub{l}})$ is not always diagonal.
In Example \ref{244}, such a non-diagonal $(M_{\ub{k}, \ub{l}})$ appears.
%For example, when $\rho=\1_{\GL_1(F)}$, 
%the irreducible representation $\1_{\GL_2(F)} \times \St(2)$ of $\GL_4(F)$
%which is the unique subrepresentation of 
%$\rho|\cdot|^{-\half{1}} \times \pair{\rho; \half{1}, -\half{1}} \times \rho|\cdot|^{\half{1}}$, 
%satisfies that 
%\[
%\Jac_{|\cdot|^{\half{1}}, |\cdot|^{-\half{1}}, |\cdot|^{\half{1}}, |\cdot|^{-\half{1}}}(\1_{\GL_2(F)} \times \St(2)) 
%\not= 0.
%\]
%Practically, it seems that $(M_{\ub{k}, \ub{l}})$ is more difficult to compute than $(m_{\ub{k}, \ub{l}})$.
\end{rem}

%\section{Proof of main theorems}
%\section{Proof of main theorems}
\section{Proof of main theorems}\label{pf}
In this section, we prove main theorems (Theorems \ref{jac1} and \ref{jac2}).

%\subsection{The case of higher multiplicity}
\subsection{The case of higher multiplicity}
We prove Theorem \ref{jac1} (1) in this subsection.
It immediately follows from the case of $\phi \in \Phi_\disc(G)$ 
together with Tadi{\'c}'s formula (Corollary \ref{cor.tdc}).

\begin{proof}[Proof of Theorem \ref{jac1} (1)]
We prove the assertion by induction on $m$.
Since 
\[
\pi(\phi, \eta) = \St(\rho, 2x+1) \rtimes \pi(\phi - (\rho \boxtimes S_{2x+1})^{\oplus 2}, \eta),
\] 
by Corollary \ref{cor.tdc}, we have
\begin{align*}
\Jac_{\rho|\cdot|^x} (\pi(\phi, \eta)) 
&= 
2 \cdot \pair{\rho; x,\dots,-(x-1)} \rtimes \pi \left(\phi - (\rho \boxtimes S_{2x+1})^{\oplus 2}, \eta \right)
\\&+
\St(\rho, 2x+1) \rtimes \Jac_{\rho|\cdot|^{x}} \left(\pi \left(\phi - (\rho \boxtimes S_{2x+1})^{\oplus 2}, \eta \right)\right).
\end{align*}
This proves the assertion when $m=3$ or $m=4$.
When $m \geq 5$, 
since 
\begin{align*}
&\St(\rho, 2x+1) \times \pair{\rho; x,\dots,-(x-1)} \rtimes \pi \left(\phi - (\rho \boxtimes S_{2x+1})^{\oplus 4}, \eta \right)
\\&\cong
\pair{\rho; x,\dots,-(x-1)} \times \St(\rho, 2x+1) \rtimes \pi 
\left(\phi_0 \oplus (\phi - (\rho \boxtimes S_{2x+1})^{\oplus 4}), \eta \right)
\\&\cong
\pair{\rho; x,\dots,-(x-1)} \rtimes \pi \left(\phi - (\rho \boxtimes S_{2x+1})^{\oplus 2}, \eta \right), 
\end{align*}
we obtain the assertion by the induction hypothesis.
\end{proof}

%\subsection{The case of multiplicity one}
\subsection{The case of multiplicity one}
Next, we prove Theorem \ref{jac1} (2).
Let $\phi = \phi_0 \oplus (\rho \boxtimes S_{2x+1})$ with $\rho \boxtimes S_{2x+1} \not\subset \phi_0$, 
and $\eta \in \widehat{\AA_\phi}$.
%We may assume that $\pi(\phi, \eta) \not= 0$.
Set 
\[
\phi' = \phi - (\rho \boxtimes S_{2x+1}) \oplus (\rho \boxtimes S_{2x-1}).
\]

\begin{proof}[Proof of Theorem \ref{jac1} (2)]
First, we assume that $x > 0$ and $\pi(\phi', \eta') \not= 0$.
We apply M{\oe}glin's construction to $\Pi_{\phi'}$.
Write 
\[
\phi' = \left(\bigoplus_{i=1}^{t} \rho \boxtimes S_{a_i} \right) \oplus \phi'_e
\]
with $a_1 \leq \dots \leq a_t$ and $\rho \boxtimes S_a \not\subset \phi'_e$ for any $a>0$.
Set 
\[
t_0 = \max\{i \in \{1, \dots, t\}\ |\ t_i = 2x-1\}.
\]
Take a new $L$-parameter
\[
\phi'_\gg = \left(\bigoplus_{i=1}^{t} \rho \boxtimes S_{a'_i} \right) \oplus \phi'_e
\]
such that
\begin{itemize}
\item
$a'_1 < \dots < a'_t$; 
\item
$a_i' \geq a_i$ and $a'_i \equiv a_i \bmod 2$ for any $i$; 
\item
$a'_{t_0} \geq 2x+1$.
\end{itemize}
We can identify $\AA_{\phi'_\gg}$ with $\AA_{\phi'}$ canonically.
Let $\eta'_\gg \in \widehat{\AA_{\phi'_\gg}}$ be the character corresponding to $\eta' \in \widehat{\AA_{\phi'}}$.
Then Theorem \ref{moe} says that 
\[
\pi(\phi', \eta') = 
\Jac_{\rho|\cdot|^{\half{a_t'-1}}, \dots, \rho|\cdot|^\half{a_t+1}}
\circ \dots \circ
\Jac_{\rho|\cdot|^{\half{a_1'-1}}, \dots, \rho|\cdot|^\half{a_1+1}}
(\pi(\phi'_\gg, \eta'_\gg)).
\]
\par

When $i=t_0$, we note that
\[
\Jac_{\rho|\cdot|^{\half{a_{t_0}'-1}}, \dots, \rho|\cdot|^\half{a_{t_0}+1}}
=
\Jac_{\rho|\cdot|^{x}} \circ \Jac_{\rho|\cdot|^{\half{a_{t_0}'-1}}, \dots, \rho|\cdot|^\half{a_{t_0}+3}}.
\]
Since $\phi'$ does not contain $\rho \boxtimes S_{2x+1}$, 
for $i>t_0$ and $a_i < 2x'+1 \leq a'_i$ with $2x'+1 \equiv a_i \bmod 2$, 
we have $x'-x > 1$.
By Lemma \ref{lem5} (2), 
we see that $\pi(\phi', \eta')$ is the image of 
\[
\Jac_{\rho|\cdot|^{\half{a_t'-1}}, \dots, \rho|\cdot|^\half{a_t+1}}
\circ \dots \circ 
\Jac_{\rho|\cdot|^{\half{a_{t_0}'-1}}, \dots, \rho|\cdot|^\half{a_{t_0}+3}}
\circ \dots \circ 
\Jac_{\rho|\cdot|^{\half{a_1'-1}}, \dots, \rho|\cdot|^\half{a_1+1}}
(\pi(\phi'_\gg, \eta'_\gg))
\]
under $\Jac_{\rho|\cdot|^x}$.
However, by applying Theorem \ref{moe} again, 
we see that this representation is isomorphic to $\pi(\phi, \eta)$.
Therefore $\pi(\phi', \eta') = \Jac_{\rho|\cdot|^x} (\pi(\phi, \eta))$, as desired.
\par

Next, we assume that $\pi(\phi', \eta') = 0$.
We claim that $\Jac_{\rho|\cdot|^x}(\pi(\phi, \eta)) = 0$.
When $\phi \in \Phi_\disc(G)$, this was proven in \cite[Lemma 7.3]{X1}.
When $\phi \in \Phi_\gp(G) \setminus \Phi_\disc(G)$, 
there exists an irreducible representation $\phi_1$ which is contained in $\phi$ with multiplicity at least two.
Then $\pi(\phi, \eta)$ is a subrepresentation of $\tau_1 \rtimes \pi(\phi-\phi_1^{\oplus2}, \eta)$, 
where $\tau_1$ is the irreducible tempered representation of $\GL_k(F)$
corresponding to $\phi_1$.
Since $\rho \boxtimes S_{2x+1}$ is contained in $\phi$ with multiplicity one, 
we have $\phi_1 \not\cong \rho \boxtimes S_{2x+1}$.
This implies that 
\[
\Jac_{\rho|\cdot|^x}(\tau_1 \rtimes \pi(\phi-\phi_1^{\oplus2}, \eta)) 
= 
\tau_1 \rtimes \Jac_{\rho|\cdot|^x}(\pi(\phi-\phi_1^{\oplus2}, \eta)). 
\]
By the induction hypothesis, $\Jac_{\rho|\cdot|^x}(\pi(\phi-\phi_1^{\oplus2}, \eta)) = 0$ 
unless $\phi_1 = \rho \boxtimes S_{2x-1}$ and $\phi$ contains it with multiplicity exactly two.
In this case, one can take $\eta_- \in \widehat{\AA_\phi}$ such that 
$\pi(\phi, \eta_-) \not= 0$ and 
\[
\pi(\phi, \eta) \oplus \pi(\phi, \eta_-) = \St(\rho, 2x-1) \rtimes \pi(\phi-\phi_1^{\oplus2}, \eta).
\]
Then by the first case, we see that $\Jac_{\rho|\cdot|^x}(\pi(\phi, \eta_-)) \not= 0$ and 
$\St(\rho, 2x-1) \rtimes \Jac_{\rho|\cdot|^x}(\pi(\phi-\phi_1^{\oplus2}, \eta))$ is irreducible.
Hence $\Jac_{\rho|\cdot|^x}(\pi(\phi, \eta))$ must be zero.
This completes the proof of Theorem \ref{jac1} (2).
\end{proof}
By the same argument as the last part, 
one can prove that $\Jac_{\rho}(\pi(\phi, \eta)) = 0$ when $x = 0$ and $m = 1$.

%\subsection{Description of small standard modules}
\subsection{Description of small standard modules}
Before proving Theorem \ref{jac1} (3), 
we describe the structures of some standard modules.

\begin{lem}\label{l2}
Let $\phi \in \Phi_\disc(G)$.
Suppose that $x > 0$, and $\phi \supset \rho \boxtimes S_{2x-1}$ but $\phi \not\supset \rho \boxtimes S_{2x+1}$.
Let $\eta \in \widehat{\AA_\phi}$ such that $\pi(\phi, \eta) \not= 0$.
We set 
\begin{itemize}
\item
$\Pi = \rho|\cdot|^x \rtimes \pi(\phi, \eta)$ to be a standard module; 
\item
$\sigma$ to be the unique irreducible quotient of $\Pi$; 
\item
$\phi' = \phi - (\rho \boxtimes S_{2x-1}) \oplus (\rho \boxtimes S_{2x+1})$; 
\item
$\eta' \in \widehat{\AA_{\phi'}}$ to be the character corresponding to $\eta \in \widehat{\AA_\phi}$
via the canonical identification $\AA_{\phi'} = \AA_\phi$.
\end{itemize}
Then there exists an exact sequence
\[
\begin{CD}
0 @>>> \pi(\phi', \eta') @>>> \Pi @>>> \sigma @>>> 0.
\end{CD}
\]
In particular, $\Pi$ has length two.
\end{lem}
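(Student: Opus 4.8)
\emph{Overall plan.} The plan is to show that $\pi(\phi',\eta')$ is the socle of $\Pi$, that $\sigma$ is its (automatically unique) irreducible quotient, and that these two exhaust the composition factors of $\Pi$; the exact sequence and the length-two assertion then follow at once. First I would produce the submodule. Since $\phi\in\Phi_\disc(G)$ and $\rho\boxtimes S_{2x+1}\not\subset\phi$, the parameter $\phi'$ is again discrete and contains $\rho\boxtimes S_{2x+1}$ with multiplicity exactly one; under the canonical identification $\AA_{\phi'}=\AA_\phi$ the central elements correspond, so $\eta'=\eta|\AA_{\phi'}$ is well defined. Now apply Theorem~\ref{jac1}(2) to the pair $(\phi',x)$: replacing $\rho\boxtimes S_{2x+1}$ by $\rho\boxtimes S_{2x-1}$ in $\phi'$ recovers $\phi$, and, chasing the character bookkeeping (the relevant inclusion of enhanced component groups is bijective because $2x-1\ge1$, i.e.\ $x>1/2$), one obtains $\Jac_{\rho|\cdot|^x}(\pi(\phi',\eta'))=\pi(\phi,\eta)$. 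In particular $\pi(\phi',\eta')\ne0$, and by Lemma~\ref{lem5}(1) there is an embedding $\pi(\phi',\eta')\hookrightarrow\rho|\cdot|^x\rtimes\pi(\phi,\eta)=\Pi$.

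Next I would identify $\sigma$ and pin down the socle. By Theorem~\ref{LLC}(5), $\Pi=I(\phi_\sigma,\eta)$ is the standard module attached to $\phi_\sigma=\rho|\cdot|^x\oplus\phi\oplus\rho|\cdot|^{-x}$, so $\sigma=\pi(\phi_\sigma,\eta)$; since $x>0$, $\phi_\sigma$ is not tempered, hence $\sigma$ is not a discrete series and $\sigma\not\cong\pi(\phi',\eta')$. Thus $\Pi$ has a submodule and a non-isomorphic quotient, so $\mathrm{length}(\Pi)\ge2$. Then I would compute $\Jac_{\rho|\cdot|^x}(\Pi)$ via Tadi\'c's formula: by Corollary~\ref{cor.tdc} applied to the one-term segment $[x,x]$, the only surviving contributions to $\Jac_{\rho|\cdot|^x}(\rho|\cdot|^x\rtimes\pi(\phi,\eta))$ are $\pi(\phi,\eta)$ and $\rho|\cdot|^x\rtimes\Jac_{\rho|\cdot|^x}(\pi(\phi,\eta))$, and the latter vanishes by Lemma~\ref{nonvanish} because $\rho\boxtimes S_{2x+1}\not\subset\phi$. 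Hence $\Jac_{\rho|\cdot|^x}(\Pi)=\pi(\phi,\eta)$, which is irreducible. By exactness of $\Jac_{\rho|\cdot|^x}$ and Frobenius reciprocity, every irreducible submodule $\pi_1$ of $\Pi$ has $\pi(\phi,\eta)$ as a subquotient of $\Jac_{\rho|\cdot|^x}(\pi_1)\ne0$; since these classes embed into $\Jac_{\rho|\cdot|^x}(\Pi)=\pi(\phi,\eta)$ (of length one), the socle of $\Pi$ is irreducible, hence equals $\pi(\phi',\eta')$.

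Finally, the length-$\le2$ statement is the crux. Write $[\Pi]=[\pi(\phi',\eta')]+[\sigma]+R$ in the Grothendieck group with $R$ effective. Applying $\Jac_{\rho|\cdot|^x}$ and using $\Jac_{\rho|\cdot|^x}(\pi(\phi',\eta'))=\pi(\phi,\eta)=\Jac_{\rho|\cdot|^x}(\Pi)$ forces $\Jac_{\rho|\cdot|^x}(\sigma)+\Jac_{\rho|\cdot|^x}(R)=0$, so (both being effective) each vanishes; in particular every constituent $\pi_1$ of $R$ satisfies $\Jac_{\rho|\cdot|^x}(\pi_1)=0$. I would rule out such a $\pi_1$ by a supercuspidal-support count: $\pi_1$ has the same supercuspidal support as $\Pi$, so the multiplicity of $\rho|\cdot|^x$ in it is one more than that in the support of $\pi(\phi,\eta)$; since $\phi$ contains no $\rho\boxtimes S_{2x+1}$, the latter multiplicity equals the multiplicity of $\rho|\cdot|^{x+1}$ in the support of $\pi(\phi,\eta)$, and hence of $\pi_1$. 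Thus in the support of $\pi_1$ the exponent $\rho|\cdot|^{x+1}$ occurs strictly less often than $\rho|\cdot|^x$, which contradicts the standard fact that, for $x>0$, the vanishing of $\Jac_{\rho|\cdot|^x}$ forces every $\rho|\cdot|^x$ in the support to be ``covered'' by a $\rho|\cdot|^{x+1}$ (the segment description of supercuspidal supports along the $\rho$-line; cf.\ \cite{X1}). Therefore $R=0$, $\Pi$ has length two, and since $\pi(\phi',\eta')=\mathrm{soc}(\Pi)$ while $\sigma$ is the unique irreducible quotient, the exact sequence $0\to\pi(\phi',\eta')\to\Pi\to\sigma\to0$ follows. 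The hardest point is exactly this last step — equivalently, knowing that $\rho|\cdot|^x\rtimes\pi(\phi,\eta)$ has length at most two — and that is where I expect the real work to lie.
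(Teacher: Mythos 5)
Your first two paragraphs run parallel to the paper: the embedding $\pi(\phi',\eta')\hookrightarrow\Pi$ comes from Theorem~\ref{jac1}(2) and Lemma~\ref{lem5}(1), and the computation $\Jac_{\rho|\cdot|^x}(\Pi)=\pi(\phi,\eta)$ (irreducible, hence $\Pi$ has a unique irreducible subrepresentation) is exactly the paper's key ingredient. One small circularity: you infer $\pi(\phi',\eta')\neq0$ \emph{from} Theorem~\ref{jac1}(2), but that theorem already assumes nonvanishing; this is easily repaired by noting that $\phi'$ is again discrete, so $\AA_{\phi'}=A_{\phi'}$, and the identification $\AA_{\phi'}\cong\AA_\phi$ carries $z_{\phi'}$ to $z_\phi$, giving $\eta'(z_{\phi'})=\eta(z_\phi)=1$ directly.

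The gap is in your final paragraph, in the cuspidal-support count. You assert that, because $\rho\boxtimes S_{2x+1}\not\subset\phi$, the multiplicity of $\rho|\cdot|^x$ in the cuspidal support of $\pi(\phi,\eta)$ equals that of $\rho|\cdot|^{x+1}$. This is false in general. Those multiplicities are governed not by $\phi$ but by the reduction from $\phi$ to the $L$-parameter $\phi_c$ of the supercuspidal anchor; if $\rho\boxtimes S_{2x+1}\subset\phi_c$ — which can happen even though $\rho\boxtimes S_{2x+1}\not\subset\phi$ — then exactly one $\rho\boxtimes S_a\subset\phi$ is lowered to $\rho\boxtimes S_{2x+1}$ and contributes $\rho|\cdot|^{x+1}$ but no $\rho|\cdot|^x$. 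Concretely, take $G=\Sp_{10}$, $\phi=\1\boxtimes(S_1\oplus S_3\oplus S_7)$, $\eta=(-,+,-)$, and $x=2$: then $\Jac_{|\cdot|^3}(\pi(\phi,\eta))=\pi(\1\boxtimes(S_1\oplus S_3\oplus S_5),(-,+,-))$ is supercuspidal, so the support of $\pi(\phi,\eta)$ contains $|\cdot|^3$ once and $|\cdot|^2$ not at all. A hypothetical constituent $\pi_1$ of $R$ would then carry $|\cdot|^2$ and $|\cdot|^3$ each with multiplicity one, and your covering inequality (even granting its validity in the classical-group setting, which itself needs justification since supports are only defined up to $\rho|\cdot|^z\leftrightarrow\rho|\cdot|^{-z}$) is not violated; the count yields no contradiction.

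The paper closes the length bound by a Plancherel-measure argument, which is the idea your proof is missing. First, $\sigma$ is the \emph{unique} non-tempered constituent: any non-tempered $\sigma'$ has, by Casselman's criterion together with Tadi\'c's formula, $\Jac_{\rho|\cdot|^{-x}}(\sigma')\neq0$, while $\Jac_{\rho|\cdot|^{-x}}(\Pi)=\pi(\phi,\eta)$ is irreducible. Second, every tempered constituent of the maximal proper subrepresentation $\Pi^{\sub}$ shares the cuspidal support of $\pi(\phi',\eta')$, hence the same Plancherel measure, hence lies in the single $L$-packet $\Pi_{\phi'}$ by \cite[Lemma A.6]{GI2}; since $\phi'$ is discrete these are all discrete series, so $\Pi^{\sub}$ is semisimple. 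Consequently every constituent of $\Pi^{\sub}$ is a subrepresentation of $\Pi$, and your own observation that $\Pi$ has a unique irreducible subrepresentation forces $\Pi^{\sub}=\pi(\phi',\eta')$. This step replaces your multiplicity count and works uniformly.
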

\begin{proof}
We note that $\pi(\phi', \eta')$ is an irreducible subrepresentation of $\Pi$
by Theorem \ref{jac1} (2) and Lemma \ref{lem5} (1).
\par

If $\sigma'$ is an irreducible subquotient of $\Pi$ which is non-tempered, 
by Tadi{\'c}'s formula and Casselman's criterion, there exists a maximal parabolic subgroup $P_k$ of $G(F)$ such that 
$\semi\Jac_{P_k}(\sigma')$ contains an irreducible representation of the form $(\rho|\cdot|^{-x} \times \tau) \boxtimes \sigma_0$. 
In particular, we have $\Jac_{\rho|\cdot|^{-x}}(\sigma') \not= 0$.
However, since $\Jac_{\rho|\cdot|^{-x}}(\Pi) = \pi(\phi, \eta)$ is irreducible, 
we see that $\sigma' = \sigma$, i.e., $\Pi$ has only one irreducible non-tempered subquotient.
\par

Let $\Pi^{\sub}$ be the maximal proper subrepresentation of $\Pi$, i.e., $\Pi/\Pi^{\sub} \cong \sigma$.
By the above argument, all irreducible subquotients of $\Pi^\sub$ must be tempered.
Since they have the same cuspidal support, they share the same Plancherel measure.
This implies that 
all irreducible subquotients of $\Pi^\sub$
belong to the same $L$-packet $\Pi_{\phi'}$ (see \cite[Lemma A.6]{GI2}), 
so that they are all discrete series.
Hence $\Pi^\sub$ is semisimple.
In particular, any irreducible subquotient $\pi'$ of $\Pi^\sub$ is a subrepresentation of $\Pi$, 
so that $\Jac_{\rho|\cdot|^x}(\pi') \not= 0$.
However, since $\Jac_{\rho|\cdot|^{x}}(\Pi) = \pi(\phi, \eta)$ is irreducible, 
$\Pi$ has only one irreducible subrepresentation.
Therefore $\Pi^\sub = \pi(\phi', \eta')$.
This completes the proof.
\end{proof}

We describe the standard module appearing in Theorem \ref{jac1} (3).
When $x=1/2$, the standard module was described in Lemma \ref{l2}.
Hence we assume $x > 1/2$.

\begin{prop}\label{sci}
Let $\phi \in \Phi_\gp(G)$.
Suppose that $x > 1/2$ and $\phi \not\supset \rho \boxtimes S_{2x+1}$.
Let $\eta \in \widehat{\AA_\phi}$ such that $\pi(\phi, \eta) \not= 0$.
We set 
\begin{itemize}
\item
$\Pi = \pair{\rho; x, x-1, \dots, -(x-1)} \rtimes \pi(\phi, \eta)$ to be a standard module; 
\item
$\sigma$ to be the unique irreducible quotient of $\Pi$; 
\item
$\phi' = \phi \oplus (\rho \boxtimes S_{2x-1}) \oplus (\rho \boxtimes S_{2x+1})$; 
\item
$\eta'_+$ and $\eta'_-$ to be the two distinct characters of $\widehat{\AA_{\phi'}}$ 
such that $\eta'_\pm|\AA_{\phi} = \eta$ and $\eta'_\pm(z_{\phi'}) = 1$.
\end{itemize}
Then there exists an exact sequence
\[
\begin{CD}
0 @>>> \pi(\phi', \eta'_+) \oplus \pi(\phi', \eta'_-) @>>> \Pi @>>> \sigma @>>> 0.
\end{CD}
\]
In particular, $\Pi$ has length $2$ or $3$ according to $\phi \supset \rho \boxtimes S_{2x-1}$ or not. 
\end{prop}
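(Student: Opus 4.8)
The plan is to mimic the strategy of Lemma \ref{l2}, using $\Jac_{\rho|\cdot|^{x}}$ as a bookkeeping device to control the subquotients of $\Pi$, but now the Jacquet module along $\rho|\cdot|^{x}$ is no longer irreducible, so the analysis must be more delicate. First I would record what $\Jac_{\rho|\cdot|^{x}}(\Pi)$ is. Writing $\pair{\rho; x, x-1, \dots, -(x-1)} = |\cdot|^{1/2}\St(\rho,2x)$, Corollary \ref{cor.tdc} gives $\mu^*_\rho$ of the induced representation explicitly: one gets contributions of the form $(\pair{\rho;x-1,\dots,x-1-k+1} \times \pair{\rho;x,\dots,x-l+1}) \otimes \pair{\rho;x-l,\dots,-(x-1)+k} \rtimes \mu^*_\rho(\pi(\phi,\eta))$, and since $\phi \not\supset \rho\boxtimes S_{2x+1}$, Lemma \ref{nonvanish} forces $\Jac_{\rho|\cdot|^{x}}(\pi(\phi,\eta)) = 0$, so only the terms with $l = 1$ survive. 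This pins down $\semi\Jac_{\rho|\cdot|^{x}}(\Pi)$ as a small explicit sum, and in particular shows it has length two, both constituents being of the shape $\pair{\rho;x-1,\dots,-(x-1)}\rtimes\pi(\phi,\eta)$, i.e.\ the standard module whose unique irreducible quotient is governed by $\phi' = \phi \oplus (\rho\boxtimes S_{2x-1})\oplus(\rho\boxtimes S_{2x+1})$ — but carrying $S_{2x-1}$ on the ``wrong side''; here Lemma \ref{l2} applies to describe each such constituent.

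Next I would identify the tempered, respectively non-tempered, subquotients of $\Pi$. By Casselman's criterion together with Tadi\'c's formula, any non-tempered irreducible subquotient $\sigma'$ of $\Pi$ must have $\Jac_{\rho|\cdot|^{-(x-1)}}$ or $\Jac_{\rho|\cdot|^{-x}}$ nonzero (the negative exponents coming from the segment $[x,-(x-1)]$); one checks via $\mu^*$ of $\Pi$ that the only negative-exponent Jacquet module which can be nonzero identifies $\sigma'$ uniquely with $\sigma$. So $\Pi$ has exactly one non-tempered irreducible subquotient, namely $\sigma$, with multiplicity one. For the maximal proper subrepresentation $\Pi^{\mathrm{sub}}$ (so $\Pi/\Pi^{\mathrm{sub}} \cong \sigma$) every irreducible subquotient is then tempered; by the Plancherel-measure / $L$-packet argument of Lemma \ref{l2} (using \cite[Lemma A.6]{GI2}) they all lie in $\Pi_{\phi'}$, hence are discrete series or, when $\phi \supset \rho\boxtimes S_{2x-1}$, possibly already in a smaller packet — in any case $\Pi^{\mathrm{sub}}$ is semisimple, so each of its constituents embeds in $\Pi$.

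Now I would use Frobenius reciprocity and Theorem \ref{jac1} (2) for the bigger parameter $\phi'$: an irreducible $\pi(\phi',\eta'')$ embeds in $\Pi = \pair{\rho;x,\dots,-(x-1)}\rtimes\pi(\phi,\eta)$ iff $\pi(\phi,\eta)$ is a quotient of $\Jac_{\rho|\cdot|^{\ub{y}}}(\pi(\phi',\eta''))$ for the appropriate tuple $\ub y = (-(x-1),\dots,x-1,x)$ or a reordering; applying Theorem \ref{jac1}(2) step by step (stripping $\rho\boxtimes S_{2x+1}$ from $\phi'$ down to $\rho\boxtimes S_{2x-1}$, legal because the multiplicities of $\rho\boxtimes S_{2x\pm1}$ in $\phi'$ are as prescribed, and $x>1/2$ makes the component-group inclusions bijective) shows this happens precisely for $\eta'' \in \{\eta'_+,\eta'_-\}$, each with multiplicity one, and that these are the \emph{only} discrete-series (more generally tempered) constituents of $\Pi$. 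Combining: $\Pi^{\mathrm{sub}} = \pi(\phi',\eta'_+)\oplus\pi(\phi',\eta'_-)$ (with the convention that a summand is dropped if it is zero, which happens exactly when $\phi\supset\rho\boxtimes S_{2x-1}$ kills one of them), giving the asserted exact sequence and the length count.

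The main obstacle will be the middle step — proving that $\Pi$ has \emph{only} the two named tempered subquotients and nothing else, with the right multiplicities. The length-two computation of $\Jac_{\rho|\cdot|^{x}}(\Pi)$ bounds things from one side, but one must also rule out extra tempered constituents not in $\Pi_{\phi'}$ and must get the multiplicities exactly right; this is where I expect to lean hardest on Casselman's criterion, the explicit $\mu^*(\Pi)$ from Tadi\'c's formula, and the Plancherel-measure rigidity of tempered packets, exactly as in the proof of Lemma \ref{l2} but with the extra case-distinction forced by whether $\rho\boxtimes S_{2x-1}\subset\phi$.
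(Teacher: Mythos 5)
Your overall skeleton matches the paper's proof: compute $\Jac_{\rho|\cdot|^x}(\Pi)$, show $\sigma$ is the only non-tempered irreducible subquotient by Casselman/Tadi\'c, conclude the remaining subquotients are tempered and lie in $\Pi_{\phi'}$ by the Plancherel-measure rigidity of \cite[Lemma A.6]{GI2}, embed $\pi(\phi',\eta'_\pm)$ into $\Pi$ via Theorem \ref{jac1} (2) and Frobenius reciprocity (this first step is exactly the paper's), and then match. However, there are two genuine problems.

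First, your reading of $\Jac_{\rho|\cdot|^x}(\Pi)$ is off. The computation indeed gives $\Jac_{\rho|\cdot|^x}(\Pi) = \pair{\rho; x-1,\dots,-(x-1)} \rtimes \pi(\phi,\eta) = \St(\rho,2x-1)\rtimes\pi(\phi,\eta)$, but this is a \emph{unitary} (tempered) induction, hence a direct sum of tempered irreducibles, not a Langlands standard module; Lemma \ref{l2} (which concerns $\rho|\cdot|^x\rtimes\pi(\phi,\eta)$ with a genuinely positive exponent on a supercuspidal) does not apply to it. Moreover its length is $2$ only if $\rho\boxtimes S_{2x-1}\not\subset\phi$; if $\rho\boxtimes S_{2x-1}\subset\phi$ it is irreducible, so the ``length two'' bound you extract from it is wrong in half the cases, precisely the ones where the claimed answer has a summand dropping to zero.

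Second, and more seriously, the crux --- that $\Pi^{\sub}$ has exactly the two named constituents with multiplicity one --- is exactly where your argument stops. You appeal to semisimplicity of $\Pi^{\sub}$ (``in any case $\Pi^{\sub}$ is semisimple, so each of its constituents embeds in $\Pi$''), but when $\phi$ is not discrete, or when $\rho\boxtimes S_{2x-1}\subset\phi$, the parameter $\phi'$ is not discrete and its packet members are tempered but not square-integrable; two non-isomorphic tempered representations with the same infinitesimal character can still admit nonsplit extensions, so semisimplicity is not free. The paper handles this by a three-way case split: (i) $\phi$ discrete with $\rho\boxtimes S_{2x-1}\not\subset\phi$, where $\phi'$ is discrete, $\Pi^{\sub}$ is genuinely semisimple and $\Jac_{\rho|\cdot|^x}$ counts constituents; (ii) $\phi$ discrete with $\rho\boxtimes S_{2x-1}\subset\phi$, where one instead computes $\semi\Jac_{P_{d(2x-1)}}(\Pi)$ and feeds in the exact sequence of Lemma \ref{l2} for $\rho|\cdot|^x\rtimes\pi(\phi,\eta)$ to see that only one tempered constituent survives; and (iii) the general case, reduced to (i)--(ii) by induction on $\dim\phi$ by peeling off a factor $\phi_1^{\oplus2}$ and inducing the resulting exact sequence. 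None of this case structure or the induction appears in your proposal, which is why the ``main obstacle'' you identify remains unresolved.
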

\begin{proof}
First, we show that there is an inclusion $\pi(\phi', \eta'_\epsilon) \hookrightarrow \Pi$ for each $\epsilon \in \{\pm1\}$.
To do this, we may assume that $\pi(\phi', \eta'_\epsilon) \not= 0$.
Note that $\phi'$ contains $\rho \boxtimes S_{2x+1}$ with multiplicity one.
By Theorem \ref{jac1} (2), 
we see that $\Jac_{\rho|\cdot|^x}(\pi(\phi', \eta'_\epsilon))$ 
is nonzero and is an irreducible subrepresentation of $\St(\rho, 2x-1) \rtimes \pi(\phi, \eta)$.
By Lemma \ref{lem5} (1), we have an inclusion
\[
\pi(\phi', \eta'_\epsilon) \hookrightarrow \rho|\cdot|^x \times \St(\rho, 2x-1) \rtimes \pi(\phi, \eta).
\]
Since $\Pi$ is a subrepresentation of $\rho|\cdot|^x \times \St(\rho, 2x-1) \rtimes \pi(\phi, \eta)$
such that
\[
\Jac_{\rho|\cdot|^x}\left(
\rho|\cdot|^x \times \St(\rho, 2x-1) \rtimes \pi(\phi, \eta)
\right) = \Jac_{\rho|\cdot|^x}(\Pi), 
\]
the above inclusion factors through $\pi(\phi', \eta'_\epsilon) \hookrightarrow \Pi$.
\par

If $\semi\Jac_{P_k}(\Pi)$ contains an irreducible representation $\tau \boxtimes \pi_0$ such that the central character of $\tau$ 
is of the form $\chi|\cdot|^{s}$ with $\chi$ unitary and $s<0$, 
by Tadi{\'c}'s formula (Theorem \ref{tdc}) and Casselman's criterion, 
$\tau = |\cdot|^{-\half{1}}\St(\rho, 2x) \times (\times_{i =1}^r \tau_i)$,
where $\tau_i$ is a discrete series representation of $\GL_{k_i}(F)$ 
such that the corresponding irreducible representation $\phi_i$ of $W_F \times \SL_2(\C)$ 
is contained in $\phi$ with multiplicity at least two, 
and $\pi_0 = \pi(\phi_0, \eta_0)$ 
with $\phi_0 = \phi- (\oplus_{i=1}^r \phi_i)^{\oplus 2}$ and $\eta_0 = \eta|\AA_{\phi_0}$.
Since such an irreducible representation $\tau \boxtimes \pi_0$ is also contained in $\semi\Jac_{P_k}(\sigma)$, 
we see that $\sigma$ is the unique irreducible non-tempered subquotient of $\Pi$.
Namely, if we let $\Pi^\sub$ be the maximal proper subrepresentation of $\Pi$, i.e., $\Pi/\Pi^\sub \cong \sigma$, 
then all irreducible subquotients of $\Pi^\sub$ must be tempered.
Moreover since these irreducible subquotients have the same cuspidal support
so that they share the same Plancherel measure, 
they belong to the same $L$-packet $\Pi_{\phi'}$ (see \cite[Lemma A.6]{GI2}).
\par

Now we show that $\Pi^\sub$ is isomorphic to $\pi(\phi', \eta'_+) \oplus \pi(\phi', \eta'_-)$.
We separate the cases as follows:
\begin{itemize}
\item
$\phi$ is discrete and $\rho \boxtimes S_{2x-1} \not\subset \phi$; 
\item
$\phi$ is discrete and $\rho \boxtimes S_{2x-1} \subset \phi$; 
\item
$\phi$ is general.
\end{itemize}
\par

When $\phi$ is discrete and $\rho \boxtimes S_{2x-1} \not\subset \phi$, 
we note that $\phi' = \phi \oplus (\rho \boxtimes S_{2x-1}) \oplus (\rho \boxtimes S_{2x+1})$ is also discrete.
Then since all irreducible subquotients of $\Pi^\sub$ are discrete series, $\Pi^\sub$ is semisimple.
In particular, any irreducible subquotient $\pi'$ of $\Pi^\sub$ 
is a subrepresentation of $\Pi$ so that $\Jac_{\rho|\cdot|^x}(\pi') \not= 0$.
However, since $\Jac_{\rho|\cdot|^x}(\Pi) = \Jac_{\rho|\cdot|^x}(\pi(\phi', \eta'_+) \oplus \pi(\phi', \eta'_-))$, 
we have $\Pi^\sub \cong \pi(\phi', \eta'_+) \oplus \pi(\phi', \eta'_-)$.
\par

When $\phi$ is discrete and $\rho \boxtimes S_{2x-1} \subset \phi$, 
any irreducible subquotient $\pi'$ of $\Pi^\sub$ belongs to $\Pi_{\phi'}$ 
with $\phi' = \phi_0' \oplus (\rho \boxtimes S_{2x-1})^{\oplus 2}$, 
where $\phi_0' = \phi - (\rho \boxtimes S_{2x-1}) \oplus (\rho \boxtimes S_{2x+1})$ is discrete 
such that $\rho \boxtimes S_{2x-1} \not\subset \phi_0'$.
Hence the Jacquet module $\semi\Jac_{P_{d(2x-1)}}(\pi')$ contains an irreducible representation of the form $\St(\rho, 2x-1) \otimes \pi_0'$.
By Tadi{\'c}'s formula (Corollary \ref{cor.tdc}), 
the sum of irreducible representations of this form appearing in $\semi\Jac_{P_{d(2x-1)}}(\Pi)$ is 
\[
\St(\rho, 2x-1) \otimes \semi( \rho|\cdot|^x \rtimes \pi(\phi, \eta) ).
\]
By Lemma \ref{l2}, we have an exact sequence
\[
\begin{CD}
0 @>>> \pi(\phi_0', \eta_0') @>>> \rho|\cdot|^x \rtimes \pi(\phi, \eta) @>>> \sigma' @>>> 0, 
\end{CD}
\]
where $\sigma'$ is the unique irreducible quotient of $\rho|\cdot|^x \rtimes \pi(\phi, \eta)$, 
and $\eta_0' \in \widehat{\AA_{\phi_0'}}$ is the character corresponding to $\eta \in \widehat{\AA_\phi}$.
Now there exists $\epsilon \in \{\pm1\}$ such that $\pi(\phi', \eta'_{-\epsilon}) = 0$.
Moreover, 
$\semi\Jac_{P_{d(2x-1)}}(\pi(\phi', \eta'_\epsilon)) \supset \St(\rho, 2x-1) \otimes \pi(\phi_0', \eta_0')$ 
since
\[
\eta'_\epsilon(\alpha_{\rho \boxtimes S_{2x+1}})
= \eta'_\epsilon(\alpha_{\rho \boxtimes S_{2x-1}})
= \eta(\alpha_{\rho \boxtimes S_{2x-1}})
= \eta_0(\alpha_{\rho \boxtimes S_{2x-1}}).
\]
On the other hand, since $\sigma \hookrightarrow \St(\rho, 2x-1) \times \rho|\cdot|^{-x} \rtimes \pi(\phi, \eta)$, 
we see that $\Jac_{P_{d(2x-1)}}(\sigma)$ is nonzero and contains $\St(\rho, 2x-1) \otimes \sigma'$.
Hence 
\[
\semi\Jac_{P_{d(2x-1)}}(\Pi) - \semi\Jac_{P_{d(2x-1)}}(\pi(\phi', \eta'_\epsilon)) - \semi\Jac_{P_{d(2x-1)}}(\sigma) 
\]
has no irreducible representation of the form $\St(\rho, 2x-1) \otimes \pi_0'$.
This shows that $\Pi^\sub = \pi(\phi', \eta'_\epsilon)$.
\par

In general, we prove the claim by induction on the dimension of $\phi$.
When $\phi$ is not discrete, 
there exists an irreducible representation $\phi_1$ of $W_F \times \SL_2(\C)$ which $\phi$ contains with multiplicity at least two.
Note that $\phi_1 \not\cong \rho \boxtimes S_{2x+1}$.
Set $\phi_0 = \phi - \phi_1^{\oplus 2}$, and $\eta_0 = \eta|\AA_{\phi_0}$.
Take $\Pi_0$, $\sigma_0$, $\phi_0'$ and $\eta'_{0,\epsilon} \in \widehat{\AA_{\phi_0'}}$ as in the statement of the proposition.
By induction hypothesis, we have an exact sequence
\[
\begin{CD}
0 @>>> \pi(\phi_0', \eta'_{0,+}) \oplus \pi(\phi_0', \eta'_{0,-}) @>>> \Pi_0 @>>> \sigma_0 @>>> 0.
\end{CD}
\]
Let $\tau$ be the irreducible discrete series representation of $\GL_k(F)$ corresponding to $\phi_1$.
The above exact sequence remains exact after taking the parabolic induction functor $\pi_0 \mapsto \tau \rtimes \pi_0$. 
Note that $\tau \times \pair{\rho; x, x-1, \dots, -(x-1)} \cong \pair{\rho; x, x-1, \dots, -(x-1)} \times \tau$ by Theorem \ref{zel}.
Since $\sigma_0$ is unitary, the parabolic induction $\tau \rtimes \sigma_0$ is semisimple.
In particular, any irreducible subquotient of $\tau \rtimes \sigma_0$ is non-tempered.
Considering the cases where
\begin{itemize}
\item
$\phi$ contains $\phi_1$ with multiplicity more than two; 
\item
$\phi$ contains $\phi_1$ with multiplicity exactly two and $\phi_1 \not\cong \rho\boxtimes S_{2x-1}$; 
\item
$\phi$ contains $\phi_1$ with multiplicity exactly two and $\phi_1 \cong \rho\boxtimes S_{2x-1}$ 
\end{itemize}
separately, 
we see that $\Pi^\sub \cong \pi(\phi', \eta'_+) \oplus \pi(\phi', \eta'_-)$ in all cases.
This completes the proof.
\end{proof}

%\subsection{The case of multiplicity two}
\subsection{The case of multiplicity two}
Finally, we prove Theorem \ref{jac1} (3).

\begin{lem}\label{lem2}
Let $\phi \in \Phi_\gp(G)$, $\eta \in \widehat{\AA_\phi}$ and $x > 0$.
Suppose that 
$\phi$ contains both $\rho \boxtimes S_{2x+1}$ and $\rho \boxtimes S_{2x+3}$ with multiplicity one.
Then we have
\[
\Jac_{\rho|\cdot|^{x+1}, \rho|\cdot|^{x}, \rho|\cdot|^{x}}(\pi(\phi, \eta)) 
\subset
2 \cdot \Jac_{\rho|\cdot|^{x}, \rho|\cdot|^{x+1}, \rho|\cdot|^{x}}(\pi(\phi, \eta)). 
\]
\end{lem}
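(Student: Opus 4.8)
The plan is to compute both sides explicitly in the Grothendieck group and to show they in fact agree when $x>1/2$ (the case $x=1/2$ requiring a small separate argument); the asserted inclusion then follows a fortiori. I may assume $\Jac_{\rho|\cdot|^{x+1}}(\pi(\phi,\eta))\neq 0$, since otherwise the left-hand side vanishes. By Theorem \ref{jac1} (2), $\Jac_{\rho|\cdot|^{x+1}}(\pi(\phi,\eta))=\pi(\phi_1,\eta_1)$, where $\phi_1$ is obtained from $\phi$ by replacing one copy of $\rho\boxtimes S_{2x+3}$ with $\rho\boxtimes S_{2x+1}$, so that $\rho\boxtimes S_{2x+1}$ has multiplicity $2$ in $\phi_1$. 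Set $\phi_{1,0}=\phi-(\rho\boxtimes S_{2x+1})-(\rho\boxtimes S_{2x+3})$, $\phi_3=\phi-(\rho\boxtimes S_{2x+3})\oplus(\rho\boxtimes S_{2x-1})$, and $\phi_4=\phi_{1,0}\oplus(\rho\boxtimes S_{2x-1})^{\oplus 2}$; note that $\rho\boxtimes S_{2x+1}$ has multiplicity $1$ in $\phi_3$ and does not occur in $\phi_{1,0}$.

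Assume $x>1/2$. Applying Theorem \ref{jac1} (3) to $\pi(\phi_1,\eta_1)$, and then Proposition \ref{sci} to rewrite the standard module $\pair{\rho;x,\dots,-(x-1)}\rtimes\pi(\phi_{1,0},\eta_1|\AA_{\phi_{1,0}})$ occurring there, one obtains in the Grothendieck group
\[
\Jac_{\rho|\cdot|^x}(\pi(\phi_1,\eta_1)) = \pi(\phi_3,\eta'_+)+\pi(\phi_3,\eta'_-)+\sigma+\pi(\phi_3,\zeta_+)-\pi(\phi_3,\zeta_-),
\]
where $\sigma$ is the (non-tempered) Langlands quotient, $\eta'_\pm$ are the two characters of $\AA_{\phi_3}$ supplied by Proposition \ref{sci}, and $\zeta_\pm$ are the characters of $\AA_{\phi_3}$ supplied by Theorem \ref{jac1} (3) (the transports of $\eta_1$ and of its partner in $\widehat{\AA_{\phi_1}}$). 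The first key point is that $\{\eta'_+,\eta'_-\}=\{\zeta_+,\zeta_-\}$: both are the set of two characters of $\AA_{\phi_3}$ restricting to $\eta_1|\AA_{\phi_{1,0}}$ and taking the value $1$ on $z_{\phi_3}$, as one checks from the explicit descriptions of the canonical embeddings in Theorems \ref{jac1} (2), (3) and Proposition \ref{sci}. Hence $\Jac_{\rho|\cdot|^x}(\pi(\phi_1,\eta_1))=2\,\pi(\phi_3,\xi)+\sigma$ with $\xi:=\zeta_+\in\{\eta'_+,\eta'_-\}$.

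Now apply $\Jac_{\rho|\cdot|^x}$ once more. By Theorem \ref{jac1} (2), $\Jac_{\rho|\cdot|^x}(\pi(\phi_3,\xi))=\pi(\phi_4,\xi|\AA_{\phi_4})$. The second key point is that $\Jac_{\rho|\cdot|^x}(\sigma)=0$: by Corollary \ref{cor.tdc} together with Lemma \ref{nonvanish}, $\Jac_{\rho|\cdot|^x}$ of the standard module $\pair{\rho;x,\dots,-(x-1)}\rtimes\pi(\phi_{1,0},\eta_1|\AA_{\phi_{1,0}})$ equals $\St(\rho,2x-1)\rtimes\pi(\phi_{1,0},\eta_1|\AA_{\phi_{1,0}})$, and by Theorem \ref{LLC} (4) this is exactly $\pi(\phi_4,\eta'_+|\AA_{\phi_4})\oplus\pi(\phi_4,\eta'_-|\AA_{\phi_4})$ (with the convention that a zero summand is dropped); comparing this with $\Jac_{\rho|\cdot|^x}$ of the right-hand side of the displayed identity forces $\Jac_{\rho|\cdot|^x}(\sigma)=0$. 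Therefore the left-hand side of the lemma equals $2\,\pi(\phi_4,\xi|\AA_{\phi_4})$. For the right-hand side, three successive applications of Theorem \ref{jac1} (2) (for $\Jac_{\rho|\cdot|^x}$, then $\Jac_{\rho|\cdot|^{x+1}}$, then $\Jac_{\rho|\cdot|^x}$, whose multiplicity-one hypotheses are immediate) give $2\,\pi(\phi_4,\eta_3|\AA_{\phi_4})$, where $\pi(\phi_3,\eta_3)=\Jac_{\rho|\cdot|^{x+1}}\Jac_{\rho|\cdot|^x}(\pi(\phi,\eta))$. Tracking $\eta$ through the two chains of canonical embeddings shows $\xi|\AA_{\phi_4}=\eta_3|\AA_{\phi_4}$, so the two sides are in fact equal, which proves the lemma for $x>1/2$. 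The case $x=1/2$ is handled by the same scheme, using that $\pair{\rho;1/2}=\rho|\cdot|^{1/2}$ and replacing Proposition \ref{sci} by a direct length-$\le 2$ analysis of $\rho|\cdot|^{1/2}\rtimes\pi(\phi_{1,0},\eta_1|\AA_{\phi_{1,0}})$ (here $\rho\boxtimes S_{2x-1}=\rho\boxtimes S_0=0$ simply drops out, so the inclusion ``$\subset 2\cdot$'' is genuinely what survives).

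The representation-theoretic skeleton above is routine; the work is in the enhanced-component-group bookkeeping, which is the main obstacle: matching the two-element sets $\{\eta'_\pm\}$ and $\{\zeta_\pm\}$, establishing $\Jac_{\rho|\cdot|^x}(\sigma)=0$ via Theorem \ref{LLC} (4), and comparing $\xi|\AA_{\phi_4}$ with $\eta_3|\AA_{\phi_4}$ by following $\eta$ through all the canonical identifications of Theorems \ref{jac1} (2), (3), \ref{LLC} (4) and Proposition \ref{sci}.
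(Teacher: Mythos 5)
The central difficulty with your proposal is that it is \textbf{circular}. The very first step of your computation applies Theorem \ref{jac1} (3) to $\pi(\phi_1,\eta_1)$ in order to express $\Jac_{\rho|\cdot|^x}(\pi(\phi_1,\eta_1))$. But in the paper's logical order, Theorem \ref{jac1} (3) is established \emph{after} and \emph{from} Lemma \ref{lem2}: the proof of Theorem \ref{jac1} (3) invokes Lemma \ref{2times}, and Lemma \ref{2times} is proved using Lemma \ref{lem2} as its key step (to get the inclusion $\subset 2\cdot(\cdots)$). So invoking Theorem \ref{jac1} (3) while proving Lemma \ref{lem2} uses the conclusion you are trying to prove. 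The rest of your argument (matching the two two-element sets of characters of $\AA_{\phi_3}$, deducing $\Jac_{\rho|\cdot|^x}(\sigma)=0$ by comparison with Theorem \ref{LLC} (4), and tracking the embeddings to show $\xi|\AA_{\phi_4}=\eta_3|\AA_{\phi_4}$) is plausible modulo the bookkeeping you acknowledge, but it does not repair this circularity.

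The paper's actual proof avoids Theorem \ref{jac1} (3) entirely and is considerably shorter. Assuming the left-hand side is nonzero, Lemma \ref{lem5} (1) gives an embedding $\pi(\phi,\eta)\hookrightarrow\rho|\cdot|^{x+1}\times\rho|\cdot|^{x}\times\rho|\cdot|^{x}\rtimes\sigma$; the short exact sequence $0\to\pair{\rho;x+1,x}\to\rho|\cdot|^{x+1}\times\rho|\cdot|^{x}\to\pair{\rho;x,x+1}\to 0$, together with the fact that both extreme terms commute with $\rho|\cdot|^x$, shows $\Jac_{\rho|\cdot|^x}(\pi(\phi,\eta))\neq 0$. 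Then three successive applications of Theorem \ref{jac1} (2) (all legitimate because at each stage the relevant $\rho\boxtimes S_{2y+1}$ occurs with multiplicity one) show that $\sigma'=\Jac_{\rho|\cdot|^{x},\rho|\cdot|^{x+1},\rho|\cdot|^{x}}(\pi(\phi,\eta))$ is nonzero and irreducible, with $\Jac_{\rho|\cdot|^x}(\sigma')=\Jac_{\rho|\cdot|^{x+1}}(\sigma')=0$. Lemma \ref{lem5} (1) then gives $\pi(\phi,\eta)\hookrightarrow\rho|\cdot|^{x}\times\rho|\cdot|^{x+1}\times\rho|\cdot|^{x}\rtimes\sigma'$, and a direct Jacquet-module computation of the right-hand side yields $\Jac_{\rho|\cdot|^{x+1},\rho|\cdot|^{x},\rho|\cdot|^{x}}(\pi(\phi,\eta))\subset 2\cdot\sigma'$. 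This uses only Theorem \ref{jac1} (2), Lemma \ref{lem5}, and a Zelevinsky-type exact sequence, none of which depend on Lemma \ref{lem2}. If you want to keep the spirit of your approach, you would need to supply an independent proof of (the portion of) Theorem \ref{jac1} (3) that you invoke, which would essentially reproduce the rest of Section 5.4.
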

\begin{proof}
We may assume that $\Jac_{\rho|\cdot|^{x+1}, \rho|\cdot|^{x}, \rho|\cdot|^{x}}(\pi(\phi, \eta)) \not= 0$.
By Lemma \ref{lem5} (1), there exists an irreducible subquotient $\sigma$ of this Jacquet module such that
\[
\pi(\phi, \eta) \hookrightarrow \rho|\cdot|^{x+1} \times \rho|\cdot|^{x} \times \rho|\cdot|^{x} \rtimes \sigma.
\]
Since there exists an exact sequence
\[
\begin{CD}
0 @>>> \pair{\rho; x+1,x} @>>> \rho|\cdot|^{x+1} \times \rho|\cdot|^{x} @>>> \pair{\rho; x,x+1} @>>> 0, 
\end{CD}
\]
where $\pair{\rho; x,x+1}$ is the unique irreducible subrepresentation 
of $\rho|\cdot|^{x} \times \rho|\cdot|^{x+1}$, 
we see that $\pi(\phi, \eta)$ is a subrepresentation of 
$\pair{\rho; x+1,x} \times \rho|\cdot|^{x} \rtimes \sigma$ or $\pair{\rho; x,x+1} \times \rho|\cdot|^{x} \rtimes \sigma$.
Since $\pair{\rho; x+1,x} \times \rho|\cdot|^{x} \cong \rho|\cdot|^x \times \pair{\rho; x+1,x}$ and 
$\pair{\rho; x,x+1} \times \rho|\cdot|^{x} \cong \rho|\cdot|^x \times \pair{\rho; x,x+1}$, 
we have $\Jac_{\rho|\cdot|^x}(\pi(\phi, \eta)) \not= 0$.
\par

By Theorem \ref{jac1} (2), 
$\Jac_{\rho|\cdot|^{x+1}}(\pi(\phi, \eta)) \not= 0$ and $\Jac_{\rho|\cdot|^x}(\pi(\phi, \eta)) \not= 0$ imply that 
$\sigma' = \Jac_{\rho|\cdot|^{x}, \rho|\cdot|^{x+1}, \rho|\cdot|^{x}}(\pi(\phi, \eta))$ is nonzero and irreducible.
Moreover, we have $\Jac_{\rho|\cdot|^{x}}(\sigma') = 0$ and $\Jac_{\rho|\cdot|^{x+1}}(\sigma') = 0$.
By Lemma \ref{lem5} (1), we have an inclusion
\[
\pi(\phi, \eta) \hookrightarrow \rho|\cdot|^{x} \times \rho|\cdot|^{x+1} \times \rho|\cdot|^{x} \rtimes \sigma'.
\]
Since 
\[
\Jac_{\rho|\cdot|^{x+1}, \rho|\cdot|^{x}, \rho|\cdot|^{x}}(\rho|\cdot|^{x} \times \rho|\cdot|^{x+1} \times \rho|\cdot|^{x} \rtimes \sigma')
= 2 \cdot \sigma', 
\]
we have $\Jac_{\rho|\cdot|^{x+1}, \rho|\cdot|^{x}, \rho|\cdot|^{x}}(\pi(\phi, \eta)) \subset 2 \cdot \sigma'$, as desired.
\end{proof}

Suppose that $x > 0$.
Let $\phi = \phi_0 \oplus (\rho \boxtimes S_{2x+1})^{\oplus 2}$ 
with $\rho \boxtimes S_{2x+1} \not\subset \phi_0$, 
and $\eta \in \widehat{\AA_\phi}$.

\begin{lem}\label{2times}
Set $\phi_1 = \phi - (\rho \boxtimes S_{2x+1})^{\oplus 2} \oplus (\rho \boxtimes S_{2x-1})^{\oplus 2}$.
We canonically identify $\AA_{\phi_1}$ with $\AA_\phi$, 
and let $\eta_1 \in \widehat{\AA_{\phi_1}}$ be the character corresponding to $\eta \in \widehat{\AA_\phi}$.
Then we have
\[
\Jac_{\rho|\cdot|^x, \rho|\cdot|^x} (\pi(\phi, \eta)) = 2 \cdot \pi(\phi_1, \eta_1).
\]
\end{lem}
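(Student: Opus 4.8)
The plan is to sandwich $\Jac_{\rho|\cdot|^x,\rho|\cdot|^x}(\pi(\phi,\eta))$ between a Tadi\'c computation and an upper bound coming from M{\oe}glin's construction, and then to match them by a multiplicity count.

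First, by Theorem \ref{LLC} (4) the tempered representation $\St(\rho,2x+1)\rtimes\pi(\phi_0,\eta_0)$, with $\eta_0=\eta|\AA_{\phi_0}$, is the direct sum of the two non-isomorphic irreducibles $\pi(\phi,\eta)$ and $\pi(\phi,\eta_-)$, where $\eta_-$ is the character from Theorem \ref{jac1} (3). Applying Corollary \ref{cor.tdc} to the segment $[x,-x]$, and then again to the two segments $[x,-x+1]$ and $[x-1,-x]$ that it produces, every contribution involving $\Jac_{\rho|\cdot|^x}(\pi(\phi_0,\eta_0))$ vanishes by Lemma \ref{nonvanish} (since $\rho\boxtimes S_{2x+1}\not\subset\phi_0$), and what remains is
\[
\Jac_{\rho|\cdot|^x,\rho|\cdot|^x}\bigl(\St(\rho,2x+1)\rtimes\pi(\phi_0,\eta_0)\bigr)=2\cdot\St(\rho,2x-1)\rtimes\pi(\phi_0,\eta_0).
\]
Since $\Jac$ is exact this gives
\[
\Jac_{\rho|\cdot|^x,\rho|\cdot|^x}(\pi(\phi,\eta))+\Jac_{\rho|\cdot|^x,\rho|\cdot|^x}(\pi(\phi,\eta_-))=2\cdot\St(\rho,2x-1)\rtimes\pi(\phi_0,\eta_0),
\]
and by Theorem \ref{LLC} (4) the right-hand side is twice a multiplicity-free tempered representation all of whose constituents lie in $\Pi_{\phi_1}$; in particular $\pi(\phi_1,\eta_1)$, when nonzero, occurs there with multiplicity exactly $2$ (it is a summand of $\St(\rho,2x-1)\rtimes\pi(\phi_0,\eta_0)$ because $\eta_1|\AA_{\phi_0}=\eta_0$).

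Next I would prove the matching upper bound $\Jac_{\rho|\cdot|^x,\rho|\cdot|^x}(\pi(\phi,\eta))\subseteq 2\cdot\pi(\phi_1,\eta_1)$. Using M{\oe}glin's construction (Theorem \ref{moe}), choose the auxiliary parameter so that only the second copy of $\rho\boxtimes S_{2x+1}$ is raised, to $\rho\boxtimes S_{2x+3}$; then $\pi(\phi,\eta)=\Jac_{\rho|\cdot|^{x+1}}(\pi(\phi_\sharp,\eta_\sharp))$ with $\phi_\sharp=\phi_0\oplus(\rho\boxtimes S_{2x+1})\oplus(\rho\boxtimes S_{2x+3})$ containing $\rho\boxtimes S_{2x+1}$ and $\rho\boxtimes S_{2x+3}$ each with multiplicity one. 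Hence
\[
\Jac_{\rho|\cdot|^x,\rho|\cdot|^x}(\pi(\phi,\eta))=\Jac_{\rho|\cdot|^{x+1},\rho|\cdot|^x,\rho|\cdot|^x}(\pi(\phi_\sharp,\eta_\sharp))\subseteq 2\cdot\Jac_{\rho|\cdot|^x,\rho|\cdot|^{x+1},\rho|\cdot|^x}(\pi(\phi_\sharp,\eta_\sharp))
\]
by Lemma \ref{lem2} applied to $\phi_\sharp$, and the representation on the right is computed to be $\pi(\phi_1,\eta_1)$ by three successive applications of Theorem \ref{jac1} (2): the replacements $\rho\boxtimes S_{2x+1}\mapsto\rho\boxtimes S_{2x-1}$, then $\rho\boxtimes S_{2x+3}\mapsto\rho\boxtimes S_{2x+1}$, then $\rho\boxtimes S_{2x+1}\mapsto\rho\boxtimes S_{2x-1}$ compose to $\phi_1$, and tracking the canonical identifications of the enhanced component groups — using that $\eta$ factors through $A_\phi$, so that its values on the two $\rho\boxtimes S_{2x+1}$-generators coincide — shows the output character is $\eta_1$. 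The identical argument for $\eta_-$ gives $\Jac_{\rho|\cdot|^x,\rho|\cdot|^x}(\pi(\phi,\eta_-))\subseteq 2\cdot\pi(\phi_1,(\eta_-)_1)$. Combining with the first step: the sum of the two Jacquet modules equals $2\St(\rho,2x-1)\rtimes\pi(\phi_0,\eta_0)$ and is contained in $2\pi(\phi_1,\eta_1)+2\pi(\phi_1,(\eta_-)_1)$; since $(\eta_-)_1\neq\eta_1$, the constituent $\pi(\phi_1,\eta_1)$ occurs in the sum only through $\Jac_{\rho|\cdot|^x,\rho|\cdot|^x}(\pi(\phi,\eta))$, so comparing its multiplicity with the value $2$ (or $0$ when $\pi(\phi_1,\eta_1)=0$, which the upper bound already forces) yields $\Jac_{\rho|\cdot|^x,\rho|\cdot|^x}(\pi(\phi,\eta))=2\cdot\pi(\phi_1,\eta_1)$.

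The main obstacle is the last step of the second paragraph, in two respects. First, the auxiliary parameter $\phi_\sharp$ can be taken with $\rho\boxtimes S_{2x+3}$ of multiplicity one only when $\rho\boxtimes S_{2x+3}\not\subset\phi_0$; in the remaining case one must raise the second copy of $\rho\boxtimes S_{2x+1}$ further, commute the extra $\Jac$-functors past $\Jac_{\rho|\cdot|^x}$ by Lemma \ref{lem5} (2), or run an induction on $\dim\phi$ in which that case is absorbed by the inductive hypothesis. Second, one must check that the enhanced component group identifications at the three invocations of Theorem \ref{jac1} (2), the identification built into M{\oe}glin's construction, and the identification $\AA_{\phi_1}=\AA_\phi$ of the statement are mutually compatible, so that the character really emerges as $\eta_1$ and not as another extension of $\eta_0$. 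Finally, when $x=1/2$ the symbols $\rho\boxtimes S_{2x-1}$ and $\St(\rho,2x-1)$ degenerate, and the statement is to be read with the conventions $\rho\boxtimes S_0=0$ and $\St(\rho,0)\rtimes\pi_0=\pi_0$; this case can be handled by the same argument with these conventions or reduced to Lemma \ref{l2}.
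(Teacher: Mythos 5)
Your argument follows the same route as the paper's: one first reduces the equality to the inclusion $\Jac_{\rho|\cdot|^x,\rho|\cdot|^x}(\pi(\phi,\eta))\subset 2\cdot\pi(\phi_1,\eta_1)$ via the Tadi\'c computation $\Jac_{\rho|\cdot|^x,\rho|\cdot|^x}(\pi(\phi,\eta_+))+\Jac_{\rho|\cdot|^x,\rho|\cdot|^x}(\pi(\phi,\eta_-))=2\cdot\St(\rho,2x-1)\rtimes\pi(\phi_0,\eta_0)$ (the paper's three bullet points), and then establishes that inclusion by M{\oe}glin's construction together with the swap of Lemma \ref{lem2}. The differences are in how the upper bound is packaged, and one of them is the source of the gap you yourself flag. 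You raise only the one copy of $\rho\boxtimes S_{2x+1}$, to $\rho\boxtimes S_{2x+3}$, and then chase through three applications of Theorem \ref{jac1} (2); the paper instead takes a full lift $\phi_\gg$ in which \emph{all} the $\rho\boxtimes S_{a_i}$ are made distinct (so any $\rho\boxtimes S_{2x+3}$ already present in $\phi_0$ is pushed up to $\geq 2x+5$), applies Theorem \ref{moe} once in each direction, and commutes the inert chains of Jacquet functors past $\Jac_{\rho|\cdot|^x,\rho|\cdot|^x}$ by Lemma \ref{lem5} (2). With the full lift the obstacle $\rho\boxtimes S_{2x+3}\subset\phi_0$ never arises, so the paper needs no case split, whereas your write-up dispatches that case only in outline.

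Two smaller points. First, the equality $\pi(\phi,\eta)=\Jac_{\rho|\cdot|^{x+1}}(\pi(\phi_\sharp,\eta_\sharp))$ is not literally an instance of Theorem \ref{moe}, since $\phi_\sharp=\phi_0\oplus(\rho\boxtimes S_{2x+1})\oplus(\rho\boxtimes S_{2x+3})$ need not have the strictly increasing $a'_i$ that theorem requires; it is, however, an instance of Theorem \ref{jac1} (2), which is established before this lemma and is what you should cite. Second, your multiplicity-matching conclusion is fine, including the bookkeeping when $\pi(\phi_1,\eta_1)=0$; note only that $\eta_1\neq(\eta_-)_1$ is automatic because $\AA_{\phi_1}\to\AA_\phi$ is a bijection and $\eta_+\neq\eta_-$. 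If you either complete the $\rho\boxtimes S_{2x+3}\subset\phi_0$ case (e.g.\ by iterating Lemma \ref{lem5} (2) to push past the extra chain) or, more simply, replace the minimal lift by the full lift as the paper does, the argument is complete.
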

\begin{proof}
Let $\eta_+ = \eta$ and $\eta_- \in \widehat{\AA_\phi}$ be as in the statement of Theorem \ref{jac1} (3).
We also take $\eta_{1,\pm} \in \widehat{\AA_{\phi_1}}$ corresponding to $\eta_{\pm}$.
Then we have
\begin{itemize}
\item
$\pi(\phi, \eta_+) \oplus \pi(\phi, \eta_-) = \St(\rho, 2x+1) \rtimes \pi(\phi_0, \eta|\AA_{\phi_0})$; 
\item
$\pi(\phi_1, \eta_{1,+}) \oplus \pi(\phi_1, \eta_{1,-}) = \St(\rho, 2x-1) \rtimes \pi(\phi_0, \eta|\AA_{\phi_0})$; 
\item
$\Jac_{\rho|\cdot|^x, \rho|\cdot|^x} (\St(\rho, 2x+1) \rtimes \pi(\phi_0, \eta|\AA_{\phi_0}))
\cong 2 \cdot \St(\rho, 2x-1) \rtimes \pi(\phi_0, \eta|\AA_{\phi_0})$.
\end{itemize}
Therefore, it is enough to show that $\Jac_{\rho|\cdot|^x, \rho|\cdot|^x} (\pi(\phi, \eta)) \subset 2 \cdot \pi(\phi_1, \eta_1)$.
\par

We apply M{\oe}glin's construction to $\Pi_{\phi}$.
Write 
\[
\phi = \left(\bigoplus_{i=1}^{t} \rho \boxtimes S_{a_i} \right) \oplus \phi_e
\]
with $a_1 \leq \dots \leq a_t$ and $\rho \boxtimes S_a \not\subset \phi_e$ for any $a>0$.
There exists $t_0 > 1$ such that $a_{t_0-1} = a_{t_0} = 2x+1$.
Take a new $L$-parameter
\[
\phi_\gg = \left(\bigoplus_{i=1}^{t} \rho \boxtimes S_{a'_i} \right) \oplus \phi_e
\]
such that
\begin{itemize}
\item
$a'_1 < \dots < a'_t$; 
\item
$a_i' \geq a_i$ and $a'_i \equiv a_i \bmod 2$ for any $i$. 
\end{itemize}
In particular, $a'_{t_0} \geq 2x+3$.
We can identify $\AA_{\phi_\gg}$ with $\AA_{\phi}$ canonically.
Let $\eta_\gg \in \widehat{\AA_{\phi_\gg}}$ be the character corresponding to $\eta \in \widehat{\AA_{\phi}}$.
Then Theorem \ref{moe} says that 
\[
\pi(\phi, \eta) = 
\Jac_{\rho|\cdot|^{\half{a'_t-1}}, \dots, \rho|\cdot|^\half{a_t+1}}
\circ \dots \circ
\Jac_{\rho|\cdot|^{\half{a'_1-1}}, \dots, \rho|\cdot|^\half{a_1+1}}
(\pi(\phi_\gg, \eta_\gg)).
\]
\par

Note that $(a_{t_0}+1)/2 = x+1$.
By Lemma \ref{lem5} (2), we see that
\begin{align*}
&\Jac_{\rho|\cdot|^x, \rho|\cdot|^x} (\pi(\phi, \eta))
\\&= 
\Jac_{\rho|\cdot|^{\half{a'_t-1}}, \dots, \rho|\cdot|^\half{a_t+1}}
\circ \dots \circ
\Jac_{\rho|\cdot|^{\half{a'_{t_0}-1}}, \dots, \rho|\cdot|^\half{a_{t_0}+3}}
\\&\circ
\Jac_{\rho|\cdot|^{x+1}, \rho|\cdot|^{x}, \rho|\cdot|^{x}}
\left(
\Jac_{\rho|\cdot|^{\half{a'_{t_0-1}-1}}, \dots, \rho|\cdot|^\half{a_{t_0-1}+1}}
\circ \dots \circ
\Jac_{\rho|\cdot|^{\half{a'_1-1}}, \dots, \rho|\cdot|^\half{a_1+1}}
(\pi(\phi_\gg, \eta_\gg))
\right).
\end{align*}
By Lemma \ref{lem2}, 
we have
\begin{align*}
&\Jac_{\rho|\cdot|^{x+1}, \rho|\cdot|^{x}, \rho|\cdot|^{x}}
\left(
\Jac_{\rho|\cdot|^{\half{a'_{t_0-1}-1}}, \dots, \rho|\cdot|^\half{a_{t_0-1}+1}}
\circ \dots \circ
\Jac_{\rho|\cdot|^{\half{a'_1-1}}, \dots, \rho|\cdot|^\half{a_1+1}}
(\pi(\phi_\gg, \eta_\gg))
\right)
\\&\subset 2 \cdot 
\Jac_{\rho|\cdot|^{x}, \rho|\cdot|^{x+1}, \rho|\cdot|^{x}}
\left(
\Jac_{\rho|\cdot|^{\half{a'_{t_0-1}-1}}, \dots, \rho|\cdot|^\half{a_{t_0-1}+1}}
\circ \dots \circ
\Jac_{\rho|\cdot|^{\half{a'_1-1}}, \dots, \rho|\cdot|^\half{a_1+1}}
(\pi(\phi_\gg, \eta_\gg))
\right).
\end{align*}
Since
\begin{align*}
&\Jac_{\rho|\cdot|^{\half{a'_t-1}}, \dots, \rho|\cdot|^\half{a_t+1}}
\circ \dots \circ
\Jac_{\rho|\cdot|^{\half{a'_{t_0}-1}}, \dots, \rho|\cdot|^\half{a_{t_0}+3}}
\\&\circ
\Jac_{\rho|\cdot|^{x}, \rho|\cdot|^{x+1}, \rho|\cdot|^{x}}
\left(
\Jac_{\rho|\cdot|^{\half{a'_{t_0-1}-1}}, \dots, \rho|\cdot|^\half{a_{t_0-1}+1}}
\circ \dots \circ
\Jac_{\rho|\cdot|^{\half{a'_1-1}}, \dots, \rho|\cdot|^\half{a_1+1}}
(\pi(\phi_\gg, \eta_\gg))
\right)
\\&= \pi(\phi_1, \eta_1)
\end{align*}
by Theorem \ref{moe}, 
we have $\Jac_{\rho|\cdot|^x, \rho|\cdot|^x} (\pi(\phi, \eta)) \subset 2 \cdot  \pi(\phi_1, \eta_1)$, as desired.
\end{proof}

Now we can prove Theorem \ref{jac1} (3).
\begin{proof}[Proof of Theorem \ref{jac1} (3)]
By Corollary \ref{cor.tdc}, we have
\[
\Jac_{\rho|\cdot|^x}( \pi(\phi, \eta_+) \oplus \pi(\phi, \eta_-) )
=
2 \cdot \pair{\rho; x, x-1, \dots, -(x-1)} \rtimes \pi(\phi_0, \eta|\AA_{\phi_0}).
\]
By Proposition \ref{sci}, we have an exact sequence
\[
\begin{CD}
0 @>>> \pi(\phi', \eta'_+) \oplus \pi(\phi', \eta'_-) @>>> \Pi @>>> \sigma @>>> 0,
\end{CD}
\]
where $\Pi = \pair{\rho; x, x-1, \dots, -(x-1)} \rtimes \pi(\phi_0, \eta|\AA_{\phi_0})$,
and $\sigma$ is the unique irreducible quotient of $\Pi$.
Fix $\epsilon \in \{\pm1\}$.
By Lemma \ref{2times}, we see that 
$\Jac_{\rho|\cdot|^x}(\pi(\phi, \eta_\epsilon)) \supset 2 \cdot \pi(\phi', \eta'_\epsilon)$. 
On the other hand, 
since 
$\semi\Jac_{P_{d(2x+1)}}(\pi(\phi, \eta_\epsilon)) \supset \St(\rho, 2x+1) \otimes \pi(\phi_0, \eta|\AA_{\phi_0})$, 
we have 
\[
\semi\Jac_{P_{2dx}}(\Jac_{\rho|\cdot|^x}(\pi(\phi, \eta_\epsilon)))
\supset 
|\cdot|^{-\half{1}} \St(\rho, 2x) \otimes \pi(\phi_0, \eta|\AA_{\phi_0}).
\]
This implies that $\Jac_{\rho|\cdot|^x}(\pi(\phi, \eta_\epsilon))$ 
contains an irreducible non-tempered representation, which must be $\sigma$.
Hence
\[
\Jac_{\rho|\cdot|^x}(\pi(\phi, \eta_\epsilon)) \supset 2 \cdot \pi(\phi', \eta_\epsilon) + \sigma
= \Pi + \pi(\phi', \eta_\epsilon) - \pi(\phi', \eta_{-\epsilon}).
\]
Considering $\Jac_{\rho|\cdot|^x}(\pi(\phi, \eta_+) \oplus \pi(\phi, \eta_-))$, 
we see that this inclusion must be an equality.
\end{proof}

If $x=0$ and $m=2$, 
we see that $\Jac_{\rho}(\pi(\phi, \eta)) \supset \pi(\phi_0, \eta|\AA_{\phi_0})$.
By the same argument, 
this inclusion must be an equality.
This completes the proof of Theorem \ref{jac1} (4),
so that the ones of all statements of Theorem \ref{jac1}.
\par

%\subsection{Description of $\mu^*_\rho$}
\subsection{Description of $\mu^*_\rho$}
We prove Theorem \ref{jac2} in this subsection.
To do this, we need the following specious lemma.

\begin{lem}\label{nec}
Let $\phi \in \Phi_\gp(G)$ and $\ub{x} = (x_1, \dots, x_m) \in \R^m$.
Suppose that $\Jac_{\rho|\cdot|^{\ub{x}}}(\pi) \not= 0$ for some $\pi \in \Pi_\phi$.

\begin{enumerate}
\item
If $x_m < 0$, then $x_i = -x_m$ for some $i$. 

\item
Suppose that $\ub{x}$ is of the form
\[
\ub{x} = 
( \underbrace{x_1^{(1)}, \dots, x_{m_1}^{(1)}}_{m_1}, 
\dots, 
\underbrace{x_1^{(k)}, \dots, x_{m_k}^{(k)}}_{m_k} )
\]
with $x_{i-1}^{(j)} > x_{i}^{(j)}$ for $1 \leq j \leq k$, $1 < i \leq m_j$, 
and $x_1^{(1)} \leq \dots \leq x_1^{(k)}$.
Then $x_1^{(j)} \geq 0$ for $j  =1, \dots, k$, and 
\[
\phi \supset \bigoplus_{j=1}^{k} \rho \boxtimes S_{2x_1^{(j)}+1}.
\]
\end{enumerate}
\end{lem}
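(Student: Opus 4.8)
The two statements both describe which sequences of exponents can be extracted from a good parity tempered representation, so the natural strategy is to run an induction fed by the explicit Jacquet module computations of Theorem~\ref{jac1}. The plan is to prove a single statement implying both parts, by induction on the number $k$ of blocks appearing in (2). First I would record the base case $k=1$: a single strictly decreasing block with head $x_1$. Here $\Jac_{\rho|\cdot|^{\ub x}}(\pi)\ne0$ already forces $\Jac_{\rho|\cdot|^{x_1}}(\pi)\ne0$, and Lemma~\ref{nonvanish} then gives $x_1\ge0$ together with $\rho\boxtimes S_{2x_1+1}\subset\phi$, which is exactly the claim. In general, I would also keep in hand the embedding $\pi\hookrightarrow\rho|\cdot|^{x_1}\times\cdots\times\rho|\cdot|^{x_m}\rtimes\sigma$ coming from Lemma~\ref{lem5}(1).

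For the inductive step I would peel off the first block $B_1$, of head $h_1=x_1^{(1)}$, and set $\pi_1=\Jac_{B_1}(\pi)$. Applying Theorem~\ref{jac1} at each extraction in $B_1$, and Corollary~\ref{cor.tdc} to follow how the segments so created shrink, shift their tails, and absorb later exponents, one proves the structural statement that every irreducible constituent of $\pi_1$ has the form $\tau\rtimes\pi(\psi,\eta_\psi)$ with $\psi\in\Phi_\gp$, where $\tau$ is a product of Zelevinsky segments built from $\rho$ whose heads are all $\le h_1$, and $\psi$ is obtained from $\phi$ by deleting some copies of $\rho\boxtimes S_{2h_1+1}$ and adjoining summands $\rho\boxtimes S_a$ with $a<2h_1+1$. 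This description is itself proved by an auxiliary induction and forces the inductive hypothesis to be formulated not just for $\pi\in\Pi_\phi$ but for representations of the broader shape ``(product of $\rho$-segments)$\,\rtimes\,$(good parity tempered)''.

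Granting this, the induction closes as follows. Since $h_1\le h_2\le\cdots\le h_k$, the head $h_j$ of each remaining block is $\ge h_1$, hence $\ge$ every segment head of $\tau$ and strictly larger than the top exponent $(a-1)/2$ of every adjoined $\rho\boxtimes S_a$; Tadi\'c's formula then shows that a block whose head strictly exceeds all segment heads of $\tau$ cannot be extracted through the general linear factor $\tau$, so $\Jac_{B_2,\dots,B_k}(\pi_1)\ne0$ forces $\Jac_{B_2,\dots,B_k}(\pi(\psi,\eta_\psi))\ne0$ (the borderline case $h_j=h_1$ being handled by counting: each block of head $h_1$ lowers the multiplicity of $\rho\boxtimes S_{2h_1+1}$ in the reduction above). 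By the inductive hypothesis, $h_j\ge0$ for $j\ge2$ and $\bigoplus_{j\ge2}\rho\boxtimes S_{2h_j+1}\subset\psi$; because the adjoined summands all have $a<2h_1+1\le2h_j+1$, these copies in fact lie inside $\phi$ with the deleted $\rho\boxtimes S_{2h_1+1}$ put back, which together with $\rho\boxtimes S_{2h_1+1}\subset\phi$ gives (2). Part (1) comes from the same analysis: by Lemma~\ref{nonvanish} a negative exponent $x_m<0$ cannot be produced through a good parity tempered factor, so it must be extracted from a segment of $\tau$; Corollary~\ref{cor.tdc} forces that segment to have tail $-x_m$ at that moment, and tracing it back to the application of Theorem~\ref{jac1} that created it (with head $h_0\ge-x_m$) shows that $-x_m$ — or, when $h_0=-x_m$, the head $h_0$ itself — occurs among $x_1,\dots,x_{m-1}$.

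The main obstacle is precisely this structural description of the constituents of $\pi_1$, and more generally the stability under each $\Jac_{\rho|\cdot|^x}$ of the class of representations ``(product of $\rho$-segments)$\,\rtimes\,$(good parity tempered)'' together with the bounds on the segment heads and on the ``$S_a$''-data of the tempered base that are needed to run the ``a block's head must match a head in the parameter'' dichotomy. This reduces to a delicate but entirely elementary combinatorial bookkeeping using Theorem~\ref{jac1}, Corollary~\ref{cor.tdc}, and Zelevinsky's Proposition~9.5, the key being to carry along the right invariant — the multiset of segment heads together with the parameter $\psi$.
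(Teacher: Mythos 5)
The paper proves Lemma~\ref{nec} by a single induction on the length $m$ of $\ub{x}$: at each step it peels off only the first exponent $x_1$, applies Theorem~\ref{jac1} to split into the two basic cases ($\phi'=\phi-\rho\boxtimes S_{2x_1+1}\oplus\rho\boxtimes S_{2x_1-1}$ versus $\pair{\rho;x_1,\dots,-(x_1-1)}\rtimes\pi_0$), and then — in the second case — uses Corollary~\ref{cor.tdc} to partition $\{2,\dots,m\}$ into the indices extracted from $\pi_0$, from the head of the segment, and from the tail, finishing with a small case analysis. You instead induct on the number $k$ of blocks, peeling off the whole first block $B_1$ at once and appealing to a structural description of $\Jac_{B_1}(\pi)$. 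This is a genuinely different organization, but it has gaps that aren't merely deferred bookkeeping.

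The central gap is that the structural claim as you state it is false. You assert that each constituent of $\Jac_{B_1}(\pi)$ has tempered part $\pi(\psi,\eta_\psi)$ with $\psi$ obtained from $\phi$ \emph{only} by deleting copies of $\rho\boxtimes S_{2h_1+1}$ and adjoining $\rho\boxtimes S_a$, $a<2h_1+1$. But the later extractions inside $B_1$ (at $x=h_1-1,\dots$) can and do remove summands of $\phi$ other than $\rho\boxtimes S_{2h_1+1}$. For example, take $\phi=(\rho\boxtimes S_5)^{\oplus2}\oplus\rho\boxtimes S_3$ and $B_1=(2,1)$. The first extraction (Theorem~\ref{jac1}~(3), $x=2$) produces a constituent $\pair{\rho;2,1,0,-1}\rtimes\pi(\rho\boxtimes S_3,\cdot)$, and the second extraction (Theorem~\ref{jac1}~(2), $x=1$) applied to the tempered factor then gives $\pair{\rho;2,1,0,-1}\rtimes\pi(\rho\boxtimes S_1,\cdot)$. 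Here $\psi=\rho\boxtimes S_1$ cannot be reached from $\phi$ by removing copies of $\rho\boxtimes S_5$ and adding something with $a<5$, because $\rho\boxtimes S_3\subset\phi$ has been removed. Your closing step (``since the adjoined summands all have $a<2h_1+1\le2h_j+1$, these copies in fact lie inside $\phi$'') still goes through if one only needs $2h_j+1>a$ for the adjoined pieces, but the multiplicity count — needed precisely when several $h_j$ equal $h_1$, which is the case you wave at with ``handled by counting'' — now has to track deletions of $\rho\boxtimes S_a$ with $a\le 2h_1+1$ rather than just $a=2h_1+1$, and you have not shown that each block of head $h_1$ drops the multiplicity of $\rho\boxtimes S_{2h_1+1}$ by exactly one under this broader class of modifications. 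This is exactly the bookkeeping that the paper's one-exponent-at-a-time induction with Corollary~\ref{cor.tdc} takes care of; your block-at-a-time induction has to reprove it under a weaker and so-far unstated invariant, and it is not obvious that it goes through in the form you sketch. The remaining claim — that a negative exponent or a block head $>h_1$ cannot be produced through the $\GL$-factor $\tau$ — also rests on the same unproved structural statement, since it requires the precise bound on the segment heads and tails of $\tau$ after iterating within $B_1$.

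In short: your strategy is plausible in outline and does offer a different decomposition (blocks versus single exponents, with an enlarged class ``(segments)$\,\rtimes\,$(tempered)'' as the induction carrier), but the invariant you carry is stated incorrectly, the critical multiplicity count for $h_j=h_1$ is not supplied, and the core iteration step is acknowledged rather than proven. The paper's approach avoids all of this by never moving more than one step at a time.
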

\begin{proof}
We prove the lemma by induction on $m$.
By Lemma \ref{nonvanish}, we see that
$2x_1+1$ is a positive integer, 
and $\phi$ contains $\rho \boxtimes S_{2x_1+1}$.
In particular, we obtain the lemma for $m=1$.
\par

Suppose that $m \geq 2$ and put $\ub{x'} = (x_2, \dots, x_m) \in \R^{m-1}$.
By Theorem \ref{jac1}, 
one of the following holds.
\begin{itemize}
\item
$\Jac_{\rho|\cdot|^{\ub{x'}}}(\pi') \not= 0$
for some $\pi' \in \Pi_{\phi'}$ with $\phi' = \phi - (\rho \boxtimes S_{2x_1+1}) \oplus (\rho \boxtimes S_{2x_1-1})$; 

\item
$\Jac_{\rho|\cdot|^{\ub{x'}}}(\pair{\rho; x_1, x_1-1, \dots, -(x_1-1)} \rtimes \pi_0) \not= 0$
for some $\pi_0 \in \Pi_{\phi_0}$ with $\phi_0 = \phi - (\rho \boxtimes S_{2x_1+1})^{\oplus2}$.
\end{itemize}
The former case can occur only if $x_1 > 0$, and 
the latter case can occur only if $\phi \supset (\rho \boxtimes S_{2x_1+1})^{\oplus 2}$. 
\par

We consider the former case.
Assume that $x_1 > 0$ and $\Jac_{\rho|\cdot|^{\ub{x'}}}(\pi') \not= 0$
for some $\pi' \in \Pi_{\phi'}$ with $\phi' = \phi - (\rho \boxtimes S_{2x_1+1}) \oplus (\rho \boxtimes S_{2x_1-1})$.
By the induction hypothesis, we have $x_i = -x_m$ for some $i \geq 2$ when $x_m < 0$, and 
\[
\phi' \supset 
(\rho \boxtimes S_{2x_1^{(1)}-1}) \oplus \left(\bigoplus_{j=2}^{k} \rho \boxtimes S_{2x_1^{(j)}+1} \right)
\]
when $\ub{x}$ is of the form in (2) since $\ub{x'}$ is also of the form.
This implies the assertion for $\phi$.
\par

We consider the latter case.
Assume that $\phi \supset (\rho \boxtimes S_{2x_1+1})^{\oplus 2}$, 
and that 
\[
\Jac_{\rho|\cdot|^{\ub{x'}}}(\pair{\rho; x_1, x_1-1, \dots, -(x_1-1)} \rtimes \pi_0) \not= 0
\]
for some $\pi_0 \in \Pi_{\phi_0}$ with $\phi_0 = \phi - (\rho \boxtimes S_{2x_1+1})^{\oplus2}$.
By Corollary \ref{cor.tdc}, 
we can divide 
\[
\{2, \dots, m\} = \{i_1, \dots, i_{m_1}\} \sqcup \{j_{1}, \dots, j_{m_2}\} \sqcup \{k_1, \dots, k_{m_3}\}
\]
with $i_1 < \dots < i_{m_1}$, $j_1< \dots < j_{m_2}$, $k_1 < \dots < k_{m_3}$ and $m_2+m_3 \leq 2x_1$
such that
\begin{itemize}
\item
$\Jac_{\rho|\cdot|^{y_1}, \dots, \rho|\cdot|^{y_{m_1}}}(\pi_0) \not= 0$ with $y_t = x_{i_t}$; 
\item
$x_{j_t} = x_1+1-t$ for $t = 1, \dots, m_2$; 
\item
$x_{k_t} = x_1-t$ for $t = 1, \dots, m_3$.
\end{itemize}
Considering the following four cases, 
we can prove the existence $x_i$ satisfying $x_i = -x_m$ when $x_m < 0$.
\begin{itemize}
\item
When $m = i_{m_1}$, by the induction hypothesis, we have $x_{i_t} = -x_m$ for some $t$.
\item
When $m = j_{m_2}$, 
we have $x_m = x_1+1-m_2 < 0$ so that $x_{j_t} = -x_m$ with $t = 2x_1+2-m_2$.
\item
When $m = k_{m_3}$ and $m_3 < 2x_1$, 
we have $x_m = x_1-m_3 < 0$ so that $x_{k_t} = -x_m$ with $t = 2x_1-m_3$.
\item
When $m = k_{m_3}$ and $m_3 = 2x_1$, we have $x_1 = -x_m$.
\end{itemize}
On the other hand, 
when $\ub{x}$ is of the form in (2), 
since $x_1^{(j)} \geq x_1^{(1)} = x_1$, 
there is at most one $j_0 \geq 2$ such that 
\[
x_1^{(j_0)} \in \{x_{j_t}\ |\ t = 1, \dots, m_2\} \cup \{x_{k_t}\ |\ t = 1, \dots, m_3\}, 
\]
in which case, $x_1^{(j_0)} = x_1$.
By the induction hypothesis, we have
\[
\phi_0 \supset \bigoplus_{\substack{2 \leq j \leq k \\ j \not= j_0}} \rho \boxtimes S_{2x_1^{(j)}+1}.
\]
This implies the assertion for $\phi$.
This completes the proof.
\end{proof}

%We also use the following.
%\begin{lem}\label{tri}
%For $\ub{k} \in K_\phi^{(m)}$ and $\ub{x} \in \Omega_m$, 
%if $\dim_{\C} \Jac_{\rho|\cdot|^{\ub{x}}} (\Delta_{\ub{x}(\ub{k})}) \not= 0$, 
%then there exists $\ub{k'} \in K_\phi$ such that $\ub{x} = \ub{x}(\ub{k'})$.
%%Moreover, $\ub{k'} \leq \ub{k}$ in $K_\phi$ with respect to the lexicographical order.
%\end{lem}
%\begin{proof}
%This is easily proven by induction on $m$.
%\end{proof}

Now we can prove Theorem \ref{jac2}.
\begin{proof}[Proof of Theorem \ref{jac2}]
Since the subgroup of $\RR_m$ spanned by $\Irr_\rho(\GL_{dm}(F)) =  \{\tau_{\ub{x}}\ |\ \ub{x} \in \Omega_m\}$ 
has another basis $\{\Delta_{\ub{x}}\ |\ \ub{x} \in \Omega_m\}$, 
we can write 
\[
\mu^*_\rho(\pi) = \sum_{m \geq 0}\sum_{\ub{x} \in \Omega_m} 
\Delta_{\ub{x}} \otimes \Pi_{\ub{x}}(\pi)
\]
for some virtual representation $\Pi_{\ub{x}}(\pi)$.
For $\ub{y} \in \Omega_m$, 
applying $\Jac_{\rho|\cdot|^{\ub{y}}}$ to $\semi\Jac_{P_{dm}}(\pi)$, 
we have
\[
\Jac_{\rho|\cdot|^{\ub{y}}}(\pi) 
= \sum_{\ub{x} \in \Omega_m} 
\Jac_{\rho|\cdot|^{\ub{y}}}(\Delta_{\ub{x}}) \otimes \Pi_{\ub{x}}(\pi)
= \sum_{\ub{x} \in \Omega_m} m(\ub{y}, \ub{x}) \cdot \Pi_{\ub{x}}(\pi), 
\]
where $m(\ub{y}, \ub{x}) = \dim_\C \Jac_{\rho|\cdot|^{\ub{y}}}(\Delta_{\ub{x}})$.
If $m'(\ub{x'}, \ub{y}) \in \mathbb{Q}$ satisfies that 
\[
\sum_{\ub{y} \in \Omega_m} m'(\ub{x'}, \ub{y}) m(\ub{y}, \ub{x}) = \delta_{\ub{x'}, \ub{x}}, 
\]
we have
\[
\Pi_{\ub{x}}(\pi) = \sum_{\ub{y} \in \Omega_m} m'(\ub{x}, \ub{y}) \cdot \Jac_{\rho|\cdot|^{\ub{y}}}(\pi). 
\]
Hence we have
\[
\mu^*_\rho(\pi) = \sum_{m \geq 0}\sum_{\ub{x}, \ub{y} \in \Omega_m} 
m'(\ub{x}, \ub{y}) \cdot \Delta_{\ub{x}} \otimes \Jac_{\rho|\cdot|^{\ub{y}}}(\pi).
\]
\par

One can easily to prove by induction that $m'(\ub{x}, \ub{x}(\ub{k})) = 0$ 
unless $\ub{x} = \ub{x}(\ub{k'})$ for some $\ub{k'} \in K_\phi$
(see also the proof of Lemma \ref{dim}).
Therefore, by Lemma \ref{nec}, we have
\[
\mu^*_\rho(\pi) = \sum_{m \geq 0}\sum_{\ub{k}, \ub{k'} \in K_\phi} 
m'(\ub{x}(\ub{k'}), \ub{x}(\ub{k})) \cdot \Delta_{\ub{x}(\ub{k'})} \otimes \Jac_{\rho|\cdot|^{\ub{x}(\ub{k})}}(\pi).
\]
This completes the proof of Theorem \ref{jac2}.
%for $\ub{y} \in \Omega_m$, if $m(\ub{y}, \ub{x}(\ub{k})) \not= 0$, 
%then $\ub{y} = \ub{x}(\ub{l})$ for some $\ub{l} \in K_\phi$.
%\vskip 100pt
%By the general description of $\mu^*_\rho$ in \S \ref{so-sp}, we have
%\[
%\mu^*_\rho(\pi) = \sum_{m \geq 0}\sum_{\ub{x} \in \Omega_m} 
%\frac{1}{\dim_\C \Jac_{\rho|\cdot|^{\ub{x}}}(\tau_{\ub{x}}) }
%\tau_{\ub{x}} \otimes \Jac_{\rho|\cdot|^{\ub{x}}}(\pi)
%\]
%for $\pi \in \Irr(G(F))$.
%By Lemma \ref{nec}, we see that
%if $\Jac_{\rho|\cdot|^{\ub{x}}}(\pi) \not= 0$ for $\ub{x} \in \Omega_m$, 
%then $\ub{x} = \ub{x}(\ub{k})$ for some $\ub{k} \in K_\phi$.
%%By Lemma \ref{dim}, we have 
%\[
%\dim_\C \Jac_{\rho|\cdot|^{\ub{x}(\ub{k})}}(\tau_{\ub{x}(\ub{k})})
%= \prod_{(x,k) \in \{(x_i,k_i) | k_i \not= 0\}} m_{(x,k)}!.
%\]
%This competes the proof of Theorem \ref{jac2}.
\end{proof}

%\subsection{A remark on standard modules}
\subsection{A remark on standard modules}
As a consequence of Theorem \ref{jac1}, 
we can prove the irreducibility of certain standard modules.
\begin{cor}\label{std}
Let $\phi \in \Phi_\gp(G)$ and $\eta \in \widehat{\AA_\phi}$ such that $\pi(\phi, \eta) \not= 0$.
Suppose that $\phi \supset \rho \boxtimes S_{2x+1}$ for $x>0$ but $\Jac_{\rho|\cdot|^x}(\pi(\phi, \eta)) = 0$.
Then the standard module 
\[
\Pi = \pair{\rho; x, x-1, \dots, -(x-1)} \rtimes \pi(\phi, \eta)
\] 
is irreducible.
\end{cor}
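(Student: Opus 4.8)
Since $\pair{\rho; x, \dots, -(x-1)}=|\cdot|^{1/2}\St(\rho,2x)$ has positive exponent $1/2$, $\Pi$ is a standard module in the sense of Theorem~\ref{LLC}(5); write $\sigma$ for its Langlands quotient. The plan is to prove $\Pi=\sigma$. First I would compute $\semi\Jac_{\rho|\cdot|^x}(\Pi)$ from Tadi\'c's formula: by Corollary~\ref{cor.tdc} applied to the segment $[x,-(x-1)]$, the only term of $\mu^*_\rho(\Pi)$ with first tensor factor $\rho|\cdot|^x$ coming from the $M^*$-part is the one with $(k,l)=(0,1)$, namely $\rho|\cdot|^x\otimes\bigl(\St(\rho,2x-1)\rtimes\pi(\phi,\eta)\bigr)$, while the only remaining potential contribution is $\rho|\cdot|^x\otimes\Jac_{\rho|\cdot|^x}(\pi(\phi,\eta))=0$ by hypothesis. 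Hence
\[
\semi\Jac_{\rho|\cdot|^x}(\Pi)=\St(\rho,2x-1)\rtimes\pi(\phi,\eta)=:T.
\]
Moreover, inspecting cases (1)--(3) of Theorem~\ref{jac1} shows that $\Jac_{\rho|\cdot|^x}(\pi(\phi,\eta))=0$ forces $\rho\boxtimes S_{2x+1}$ to occur in $\phi$ with multiplicity exactly one and (for $x>1/2$) forces $\rho\boxtimes S_{2x-1}\subset\phi$; by Theorem~\ref{LLC}(4), $T$ is then irreducible and tempered. The same computation gives $\semi\Jac_{\rho|\cdot|^x}(T)=0$, whence $\semi\Jac_{\rho|\cdot|^x}\bigl(\rho|\cdot|^x\rtimes T\bigr)=T$ as well.

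Next I would rule out tempered subquotients of $\Pi$. Because $\rho\boxtimes S_{2x+1}\subset\phi$, the character $\rho|\cdot|^x$ occurs in the cuspidal support of $\pi(\phi,\eta)$ with the same multiplicity as $\rho|\cdot|^{-x}$, while $\pair{\rho; x, \dots, -(x-1)}$ contributes one further $\rho|\cdot|^x$ and no $\rho|\cdot|^{-x}$; thus no multiset in the $W(G)$-orbit of the cuspidal support of $\Pi$ has equal multiplicities of $\rho|\cdot|^x$ and $\rho|\cdot|^{-x}$. Since the cuspidal support of an irreducible tempered representation of $G(F)$, read off from its $L$-parameter, is always represented by a multiset symmetric under $\rho'|\cdot|^s\mapsto\rho'|\cdot|^{-s}$, no constituent of $\Pi$ is tempered. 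Consequently $\sigma$ is the unique constituent of $\Pi$ of maximal Langlands data, and it occurs with multiplicity one.

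From $\pair{\rho; x, \dots, -(x-1)}\hookrightarrow\rho|\cdot|^x\times\St(\rho,2x-1)$ we obtain $\Pi\hookrightarrow\rho|\cdot|^x\rtimes T$. By Frobenius reciprocity any irreducible subrepresentation $\pi'$ of $\Pi$ has $\Jac_{\rho|\cdot|^x}(\pi')\neq0$; as $\Jac_{\rho|\cdot|^x}(\pi')\hookrightarrow\Jac_{\rho|\cdot|^x}(\Pi)=T$ with $T$ irreducible, $\Jac_{\rho|\cdot|^x}(\pi')=T$. Since $T$ has length one, $\Pi$ has a unique irreducible subrepresentation $\pi'$, and from $0\to\pi'\to\Pi\to\Pi/\pi'\to0$ that $\semi\Jac_{\rho|\cdot|^x}(\Pi/\pi')=0$. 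The same reasoning identifies $\pi'$ with the unique irreducible subrepresentation of $\rho|\cdot|^x\rtimes T$, while the opposite induced representation $\pair{\rho; x-1, \dots, -x}\rtimes\pi(\phi,\eta)\cong\Pi^\vee$ has $\sigma$ as its unique irreducible subrepresentation. It remains to prove $\pi'=\sigma$: granting this, $\sigma$ is simultaneously the socle and the cosocle of $\Pi$ and has multiplicity one, so $\Pi=\sigma$ is irreducible.

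The main obstacle is exactly this last identification. The functor $\Jac_{\rho|\cdot|^x}$ by itself is not enough — everything above is consistent with $\pi'\neq\sigma$ and $\Jac_{\rho|\cdot|^x}(\sigma)=0$. To exclude that, the hard part of the argument will be to track Langlands data: $\pi'$ is an irreducible \emph{subrepresentation} of the standard module $\Pi$ (and of $\rho|\cdot|^x\rtimes T$), whereas $\sigma$ carries the maximal Langlands data among constituents of $\Pi$. Comparing these, in the spirit of the proofs of Lemma~\ref{l2} and Proposition~\ref{sci} and using that $\Jac_{\rho|\cdot|^x}(\pi')=T$ pins down the leading datum of $\pi'$ (with $T$ tempered and $\rho\boxtimes S_{2x\pm1}\subset$ its parameter), should force $\pi'$ to have the same Langlands data as $\sigma$, hence $\pi'=\sigma$ and $\Pi$ irreducible.
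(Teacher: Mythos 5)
There is a genuine gap, and also a step that is simply false.

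First, your claim that the cuspidal support of a tempered representation is always ``symmetric under $\rho'|\cdot|^s\mapsto\rho'|\cdot|^{-s}$'', and in particular that $\rho|\cdot|^x$ and $\rho|\cdot|^{-x}$ have the same multiplicity in $\supp\pi(\phi,\eta)$ whenever $\rho\boxtimes S_{2x+1}\subset\phi$, is not correct. Discrete series of classical groups have asymmetric cuspidal supports: as the paper's Example~\ref{ab} shows, for $\phi=\rho\boxtimes(S_a\oplus S_b)$ with $a<b$ the cuspidal support of $\pi_\pm(a,b)$ is the segment $\{\rho|\cdot|^s: s\in[(b-1)/2,-(a-1)/2]\}$, which contains $\rho|\cdot|^{(b-1)/2}$ but not $\rho|\cdot|^{-(b-1)/2}$. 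Taking $a=b-2$, $x=(b-1)/2$ and $\eta$ so that $\eta(\alpha_{\rho\boxtimes S_a})\neq\eta(\alpha_{\rho\boxtimes S_b})$ gives exactly a case covered by the corollary where your symmetry claim fails. So the argument ruling out tempered subquotients of $\Pi$ does not stand, and in fact the paper's proof \emph{admits} the a priori possibility of tempered constituents, lying in $\Pi_{\phi'}$ with $\phi'=\phi\oplus(\rho\boxtimes S_{2x-1})\oplus(\rho\boxtimes S_{2x+1})$.

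Second, as you yourself say, the final and decisive step --- identifying the unique irreducible subrepresentation $\pi'$ with the Langlands quotient $\sigma$ --- is left open. Everything you prove before is consistent with $\pi'\neq\sigma$, so the proof is not complete. The paper closes both gaps by a different route. As in Proposition~\ref{sci}, Tadi\'c's formula plus Casselman's criterion show that $\sigma$ is the unique non-tempered irreducible subquotient of $\Pi$. Any other irreducible subquotient $\pi'$ would therefore be tempered and, by cuspidal support or Plancherel measure, lie in $\Pi_{\phi'}$; since $\phi\supset\rho\boxtimes S_{2x+1}$ forces $\phi'\supset(\rho\boxtimes S_{2x+1})^{\oplus2}$, Theorem~\ref{jac1}~(3) shows $\Jac_{\rho|\cdot|^x}(\pi')$ contains a non-tempered constituent, contradicting that $\Jac_{\rho|\cdot|^x}(\Pi)=\St(\rho,2x-1)\rtimes\pi(\phi,\eta)$ consists of tempered representations. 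Your computation of $\Jac_{\rho|\cdot|^x}(\Pi)=T$ and your observation that the hypothesis forces $\rho\boxtimes S_{2x+1}$ to have multiplicity one and (for $x>1/2$) $\rho\boxtimes S_{2x-1}\subset\phi$ are both correct and are exactly the ingredients the paper feeds into Theorem~\ref{jac1}~(3); you just point them in the wrong direction (at the socle/cosocle comparison rather than at the contradiction with the temperedness of $T$).
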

\begin{proof}
Let $\sigma$ be the unique irreducible quotient of $\Pi$, which is non-tempered.
By the same argument as the proof of Proposition \ref{sci}, 
we see that $\sigma$ is the unique irreducible non-tempered subquotient of $\Pi$.
Suppose that $\Pi$ is reducible.
If $\pi'$ is another irreducible subquotient of $\Pi$, 
by considering its cuspidal support or its Plancherel measure, 
we see that $\pi' \in \Pi_{\phi'}$ 
with $\phi' = \phi \oplus (\rho \boxtimes S_{2x-1}) \oplus (\rho \boxtimes S_{2x+1})$.
Since $\phi \supset \rho \boxtimes S_{2x+1}$, 
we see that $\phi' \supset (\rho \boxtimes S_{2x+1})^{\oplus 2}$.
By Theorem \ref{jac1}, $\Jac_{\rho|\cdot|^x}(\pi')$ contains an irreducible non-tempered representation.
However, $\Jac_{\rho|\cdot|^x}(\Pi) = \St(\rho, 2x-1) \rtimes \pi(\phi, \eta)$ consists of tempered representations.
This is a contradiction.
\end{proof}

\begin{ex}\label{246}
Consider $\phi = S_2 \oplus S_4 \oplus S_6 \in \Phi_\disc(\SO_{13})$
and $\eta \in \widehat{A_\phi}$ given by $\eta(\alpha_{S_{2a}}) = (-1)^{a}$ for $a=1,2,3$.
Then $\pi(\phi, \eta)$ is an irreducible supercuspidal representation.
Moreover, the standard module 
\[
\Pi = \pair{\1_{\GL_1(F)}; \half{5}, \half{3}, \half{1}, -\half{1}, -\half{3}} \rtimes \pi(\phi, \eta)
\]
of $\SO_{23}(F)$ is irreducible.
Note that the $L$-parameter 
$\phi' = \phi \oplus |\cdot|^{\half{1}} S_5 \oplus |\cdot|^{-\half{1}} S_5$
of $\Pi$ is non-generic
since 
\[
L(s, \phi', \Ad) = \zeta_F(s-1)\zeta_F(s)^{3}\zeta_F(s+1)^{13}
\zeta_F(s+2)^{10}\zeta_F(s+3)^{12}\zeta_F(s+4)^{5}\zeta_F(s+5)^{3} 
\]
has a pole at $s=1$, 
where $\zeta_F$ is the local zeta function associated to $F$.
In particular, the standard module
\[
\Pi_0 = \pair{\1_{\GL_1(F)}; \half{5}, \half{3}, \half{1}, -\half{1}, -\half{3}} \rtimes \pi(\phi, \1)
\]
is reducible by Theorem \ref{smc}.
\end{ex}

%References

\end{document}